\documentclass[a4paper]{amsart}

\usepackage{amssymb}
\usepackage{amsthm}
\usepackage{amsmath}
\usepackage{mathtools}
\usepackage{amssymb}
\usepackage{latexsym}
\usepackage{graphics}
\usepackage{graphicx}
\usepackage{stackrel}
\usepackage{gensymb}
\usepackage{color}
\usepackage[dvipsnames]{xcolor}
\usepackage{import}  
\usepackage{overpic}
\usepackage{cite}
\usepackage{contour}
\usepackage{verbatim}
\usepackage[shortlabels]{enumitem}

\usepackage[hidelinks,pdftex]{hyperref}

\usepackage[margin=1.2in]{geometry}

\AtBeginDocument{%
   \def\MR#1{}
}

\numberwithin{equation}{section}
\theoremstyle{plain}
\newtheorem{theorem}[equation]{Theorem}
\newtheorem{lemma}[equation]{Lemma}
\newtheorem{proposition}[equation]{Proposition}
\newtheorem{corollary}[equation]{Corollary}
\newtheorem{conjecture}[equation]{Conjecture}

\newtheorem*{namedtheorem}{\theoremname}
\newcommand{\theoremname}{testing}
\newenvironment{named}[1]{\renewcommand{\theoremname}{#1}\begin{namedtheorem}}{\end{namedtheorem}}

\theoremstyle{definition}
\newtheorem{definition}[equation]{Definition}

\newtheorem{remark}[equation]{Remark}

\newcommand{\HH}{{\mathbb{H}}}
\newcommand{\RR}{{\mathbb{R}}}
\newcommand{\ZZ}{{\mathbb{Z}}}

\newcommand{\CC}{{\mathbb{C}}}

\newcommand{\vol}{\operatorname{Vol}}

\newcommand{\bdy}{\partial}
\renewcommand{\setminus}{{\smallsetminus}}

\newcommand{\qroot}{e^{(2\pi\sqrt{-1})/r}}
\newcommand{\Tet}{{\mathrm{Tet}}}
\newcommand{\sgn}{\operatorname{sgn}}

\newcommand{\refthm}[1]{Theorem~\ref{Thm:#1}}
\newcommand{\reflem}[1]{Lemma~\ref{Lem:#1}}
\newcommand{\refprop}[1]{Proposition~\ref{Prop:#1}}
\newcommand{\refcor}[1]{Corollary~\ref{Cor:#1}}

\newcommand{\refconj}[1]{Conjecture~\ref{Conj:#1}}
\newcommand{\refeqn}[1]{\eqref{Eqn:#1}}

\newcommand{\refdef}[1]{Definition~\ref{Def:#1}}
\newcommand{\refsec}[1]{Section~\ref{Sec:#1}}
\newcommand{\reffig}[1]{Figure~\ref{Fig:#1}}

\begin{document}

\title[Augmented links, shadow links, and the TV volume conjecture]{Augmented links, shadow links, and the TV volume conjecture: A geometric perspective}

\author{Dionne Ibarra}
\address{School of Mathematics, Monash University, Clayton, VIC 3800, Australia}
\email{dionne.ibarra@monash.edu}

\author{Emma N.~McQuire}
\address{School of Mathematics, Monash University, Clayton, VIC 3800, Australia}
\email{emma.mcquire@monash.edu}

\author{Jessica S.~Purcell}
\address{School of Mathematics, Monash University, Clayton, VIC 3800, Australia}
\email{jessica.purcell@monash.edu}

\subjclass[2020]{57K10, 57K12, 57K16, 57K31, 57K32}

\begin{abstract}
For hyperbolic 3-manifolds, the growth rate of their Turaev--Viro invariants, evaluated at a certain root of unity, is conjectured to give the hyperbolic volume of the manifold. This has been verified for a handful of examples and several infinite families of link complements, including fundamental shadow links. Fundamental shadow links lie in connected sums of copies of $S^1\times S^2$, and their complements are built of regular ideal octahedra.
Another well-known family of links with complements built of regular ideal octahedra are the octahedral fully augmented links in $S^3$. The complements of these links are now known to be homeomorphic to complements of fundamental shadow links, using topological techniques.
In this paper, we give a new, geometric proof that complements of octahedral fully augmented links are isometric to complements of fundamental shadow links. We then use skein theoretic techniques to determine formulae for coloured Jones polynomials of these links. In the case of no half-twists, this gives a new, more geometric verification of the Turaev--Viro volume conjecture for these links.
\end{abstract}
\maketitle

\section{Introduction}

A major initiative in quantum and geometric topology is to relate geometric invariants of knots and 3-manifolds to quantum invariants. An example of a geometric invariant is the hyperbolic volume, which is unique for any hyperbolic 3-manifold with torus boundary, such as a knot complement, by Mostow--Prasad rigidity~\cite{MostowRigidity, PrasadRigidity}. An example of a quantum invariant is the Jones polynomial~\cite{Jones1985}; other examples include coloured Jones polynomials \cite{ReshetikhinTuraevRibbonGraphInvars} and Turaev--Viro invariants~\cite{TuraevViro}, defined using quantum groups and $3$-manifold triangluations respectively. Volume conjectures relate these, stating that the growth rate of certain quantum invariants are given by the volume of the corresponding knot complement. In this paper, we study a volume conjecture by Chen and Yang~\cite{ChenYang}, concerning the Turaev--Viro invariants. 

The Turaev--Viro invariant $TV_r(M,q)$ of a compact $3$-manifold $M$ is a real-valued invariant, depending on an integer $r\ge3$, given by a Laurent polynomial in variable $q$, where $q$ is a $2r^{th}$ or $r^{th}$ root of unity. The invariant was formulated in the early 1990s by Turaev and Viro~\cite{TuraevViro}. In 2018, Chen and Yang~\cite{ChenYang} extensively calculated growth rates of these invariants for the case $q=\qroot$ and stated the following conjecture.

\begin{conjecture}[TV Volume Conjecture~\cite{ChenYang}]\label{Conj:TVconj}
Let $M$ be a hyperbolic 3-manifold, either closed, cusped or with totally geodesic boundary and volume $\vol(M)$. As $r$ varies over the odd integers, and $q=\qroot$,
\[ \lim_{r\to\infty}\frac{2\pi}{r}\log\left|TV_r(M,q)\right|=\vol(M). \]
\end{conjecture}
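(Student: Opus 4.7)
The plan is not to prove the full conjecture (which remains open in general), but to address it for the specific family of links that the paper announces in the abstract: octahedral fully augmented links in $S^3$ with no half-twists. My strategy follows the three-step program suggested by the abstract. First, establish a geometric identification between complements of octahedral fully augmented links and complements of fundamental shadow links. Second, use this identification together with skein-theoretic techniques to obtain closed-form coloured Jones polynomial formulae. Third, combine these with a known Detcherry--Kalfagianni--Yang style relation between $TV_r$ and sums of squared coloured Jones (or Reshetikhin--Turaev) invariants, and extract the asymptotic growth rate.

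For the geometric step, I would decompose both link complements into regular ideal octahedra. For an octahedral fully augmented link in $S^3$, the standard crossing-circle decomposition of Agol--Thurston produces two regular ideal octahedra per crossing circle. For a fundamental shadow link in a connected sum of copies of $S^1\times S^2$, an analogous decomposition comes from the defining $4$-valent graph. Matching these decompositions combinatorially, by verifying that the face-pairings agree, gives the isometry and immediately yields $\vol(M) = n\,v_8$, where $v_8$ is the volume of the regular ideal octahedron and $n$ is the number of octahedra. This isometry also lets us transport any already-known TV volume conjecture result for fundamental shadow links to the fully augmented link side, but I would additionally pursue a direct skein-theoretic computation in order to obtain the more geometric verification promised by the abstract.

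For the skein-theoretic step, I would compute the Kauffman bracket of the octahedral fully augmented link with the two strands through each crossing circle cabled by Jones--Wenzl projectors. In the no-half-twist case, the tangle through each crossing circle is particularly tame: two parallel arcs capped by a Jones--Wenzl idempotent, which can be evaluated via standard identities for trihedral and tetrahedral networks. This should reduce the entire coloured Jones polynomial to a product of quantum $6j$-symbols or tetrahedral coefficients, one contribution per crossing circle, summed over admissible colourings at the edges of the underlying graph. Passing to $TV_r$ via the Detcherry--Kalfagianni--Yang formula gives a finite sum in which the summand is essentially a product of quantum $6j$-symbols at $q=\qroot$.

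For the asymptotic step, I would apply Costantino's asymptotic formula for quantum $6j$-symbols at unitary roots, which expresses them (at large $r$) as exponentially weighted by the volume of an associated truncated hyperbolic tetrahedron, together with a polynomial prefactor. For the regular ideal octahedron one splits the octahedron into such tetrahedra and verifies that the dominant critical point recovers $v_8$ per octahedron. A stationary-phase / saddle-point argument then shows that the sum over admissible colourings is dominated by a single critical configuration, yielding the total growth rate $n\,v_8=\vol(M)$. The main obstacle I expect is precisely this last step: controlling the sum away from the critical point and verifying that no cancellations or sub-dominant terms dominate, i.e.\ establishing the required uniform estimates on quantum $6j$-symbols at $\qroot$ so that the stationary-phase estimate is rigorous. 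The geometric identification and the skein-theoretic reduction should be comparatively routine given the tools developed for fundamental shadow links.
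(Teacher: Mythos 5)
Your three-step program matches the overall architecture of the paper's verification of this conjecture for octahedral fully augmented links: (1) a geometric identification with fundamental shadow links, which the paper establishes via the circle-packing nerve and a ``gluing graph'' induction (\refthm{octareFSL}); (2) a skein-theoretic reduction of the coloured Jones polynomial to a sum of products of $c-1$ quantum $6j$-symbols with $\Delta_j\lambda_{j,a}$ prefactors (\refprop{ColJonesforOctFAL}); and (3) the Detcherry--Kalfagianni--Yang formula converting $TV_r$ into a sum of $|J_{L,\pmb{i}}|^2$ (\refthm{TVfromColJones}). One small correction to your geometric step: the Agol--Thurston decomposition produces two polyhedra each built from $c-1$ regular ideal octahedra, for a total of $2(c-1)$, not ``two octahedra per crossing circle''; the volume is thus $2(c-1)v_8$.

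The place where your proposal genuinely diverges from what the paper does --- and where you yourself flag the obstacle --- is the asymptotic step. You propose a stationary-phase/saddle-point analysis of the full sum using Costantino's $6j$-asymptotics, and you correctly anticipate that controlling cancellation away from the critical point is the hard part. The paper sidesteps this entirely. For the upper bound it uses only the uniform $6j$-symbol bound $\frac{2\pi}{r}\log|6j|\le v_8 + \mathcal{O}(\log r/r)$ of Belletti--Detcherry--Kalfagianni--Yang and the fact that the number of terms grows polynomially. For the lower bound it specialises to the single colouring $\pmb{n}_r=(n_r,\dots,n_r)$ with $n_r=(r-1)/2$ or $(r-3)/2$ depending on $r\bmod 4$, and proves via an explicit sign analysis of $\Delta_j$, $\lambda_{j,n_r}$, and the relevant $6j$-symbols (\reflem{DeltaLambdaSigns}, \reflem{6jSign}, \refcor{SignofN}) that \emph{all summands have the same sign}, hence the sum is bounded below by the single term $j_1=\cdots=j_c=n_r$, which realises the maximal growth $v_8$ per $6j$-symbol. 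This is much more elementary than stationary phase, and it is precisely why the result is restricted to flat links: a half-twist contributes a factor $\gamma_j^{n_r,n_r}$ that changes sign with $j$ and can even be complex, so the uniform-sign trick breaks down. If you pursue the stationary-phase route you are tackling a strictly harder problem, and you would need new uniform error estimates on $6j$-symbols at $\qroot$ that are not established in the references; the sign argument is the route that actually closes the gap in the paper.
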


\refconj{TVconj} has been verified for the complements of the Borromean rings and the figure-8 knot by Detcherry, Kalfagianni and Yang~\cite{DKYRTTV}, for hyperbolic manifolds obtained by integral and rational Dehn surgery on the figure-8 knot by Ohtsuki~\cite{Ohtsuki} and Wong and Yang~\cite{WongYangFig8DehnFill} respectively, and for certain octahedral links in $S^3$ called Whitehead chains by Wong~\cite{WongWhitehead}. In~\cite{Kumar}, Kumar constructs infinite families of links in $S^3$ which satisfy \refconj{TVconj}.
An extension of the conjecture to Gromov norms has been proposed by Detcherry and Kalfagianni~\cite{DKGromovNorm} and proved for several examples~\cite{KumarMelbyTVadditivity,DKGromovNorm,DKYRTTV}. \refconj{TVconj} has also been shown to be stable under certain link cabling and satellite operations~\cite{DKGromovNorm,DetcherryTVCabling, KumarMelby:Cabling}.

In~\cite{MainPaper}, Belletti, Detcherry, Kalfagianni and Yang proved \refconj{TVconj} for an infinite class of links in connected sums $\#^{c+1} (S^1\times S^2)$ for any $c>0$, known as fundamental shadow links. These links were first considered by Costantino and Thurston~\cite{CostThur3Mani}. They have complements decomposing into hyperbolic regular ideal octahedra, and they form a universal class in the sense that any orientable 3-manifold with empty or toroidal boundary can be obtained from the complement of a fundamental shadow link by Dehn filling~\cite{CostThur3Mani}.

Recently, Wong and Yang showed using surgery descriptions that certain fundamental shadow links coincide with a subfamily of other links with well-known geometric properties, called fully augmented links~\cite[Proposition~6.2]{WongYang}. These are links in $S^3$, first studied by Adams~\cite{Adams} and Agol and D.~Thurston~\cite[Appendix]{LackenbyAltLinks}. Wong and Yang showed that all fully augmented links with complements decomposing into regular ideal octahedra have complements homeomorphic to fundamental shadow links, reframing previous work of Van der Veen~\cite{VanDerVeen:FALvolConj}. Kumar also showed a correspondence between some of these links in~\cite{Kumar}. 
Thus, by the work of~\cite{MainPaper,WongYang}, \refconj{TVconj} is true for all octahedral fully augmented links.

The proof in~\cite{WongYang} is topological and combinatorial. However, fully augmented links are highly geometric. In this paper, we give a new proof of the correspondence of these link complements, exploiting their geometry. 

\begin{named}{\refthm{octareFSL}}
Let $L$ be an octahedral fully augmented link with $c$ crossing circles. Then $S^3-L$ is isometric to $\#^c (S^1\times S^2)-\Tilde{L}$ for some fundamental shadow link $\Tilde{L}$.
\end{named}

The proof of \refthm{octareFSL} draws on the circle packing structure that can be obtained from the polyhedral decomposition of fully augmented links described by Agol and Thurston~\cite[Appendix]{LackenbyAltLinks}. Our proof complements the proofs of~\cite{WongYang, VanDerVeen:FALvolConj}, in that we use explicit geometric properties of the link to show the correspondence. This also highlights the geometry of fundamental shadow links. Our methods further give a correspondence between the diagrams produced by Wong and Yang and the circle packing geometry of fully augmented links found in the literature; see for example~\cite{PurcellFullyAugLinks, PurcellCuspShapes, JessicaBook}.

We next give a diagrammatic construction of the coloured Jones polynomial for octahedral fully augmented links, using recoupling theory of~\cite{KauffSpinNetworks,KauffLins,LickorishTextbook}. Our proof has similarities to work of Van der Veen~\cite{VanDerVeen:FALvolConj}, but we consider slightly different links, a different colouring, a different root of unity, and his focus is a different volume conjecture.
When computing the coloured Jones polynomial, we observe that the structure of the circle packing is closely matched by the quantum contributions, in that adding one octahedron to one of the polyhedra in the polyhedral decomposition contributes exactly one quantum $6j$-symbol in the recoupling theory. Quantum $6j$-symbols are known to have growth rates that are maximally given by the volume of a regular ideal hyperbolic octahedron~\cite{MainPaper}. We then study the Turaev--Viro invariants of fully augmented links by using a result of Detcherry, Kalfagianni and Yang \cite{DKYRTTV}, which gives the Turaev--Viro invariants as a sum of coloured Jones polynomials.

This leads to a diagrammatic proof of \refconj{TVconj} for all octahedral fully augmented links without half-twists. Note the result also follows from~\cite{WongYang} and~\cite{MainPaper}, but our proof uses different methods. Our techniques are only available for diagrams of links in the 3-sphere, and not for links in other manifolds such as connect sums of $S^1\times S^2$. 

\begin{named}{\refthm{TVconjforOctFAL}}
Let $L$ be an octahedral fully augmented link with $c$ crossing circles and without half-twists. Then as $r$ varies over the odd integers,
\[\lim_{r\to\infty}\frac{2\pi}{r}\log|TV_r(S^3\setminus L,q=\qroot)|=2(c-1)v_8=\vol(S^3\setminus L),\]
where $v_8\cong 3.66$ is the volume of a regular ideal hyperbolic octahedron.
\end{named}

We note related work on a different volume conjecture, namely the volume conjecture of Kashaev~\cite{Kashaev}, reformulated by Murakami, Murakami~\cite{MurakamiMurakami}. That volume conjecture relates the coloured Jones polynomials to the hyperbolic volume of hyperbolic knot complements, evaluated at $q=A^2=e^{(\pi\sqrt{-1})/r}$. Van der Veen proved the Kashaev--Murakami--Murakami volume conjecture holds for Whitehead chains and knotted trivalent graphs~\cite{VanDerVeen:WhiteheadChains, VanDerVeen:FALvolConj}, which include octahedral fully augmented links. Again our work is related, but using different quantum invariants at different roots of unity.

An outline of the paper is as follows. In \refsec{FALandFSL}, we introduce fully augmented links and fundamental shadow links, and give our geometric proof that fully augmented links correspond to fundamental shadow links.
In \refsec{CJPforOFAL}, we give a new calculation of the coloured Jones polynomials of octahedral fully augmented links, using combinatorics of the diagram and recoupling theory. Finally in \refsec{TV}, we give a new proof of the TV Volume Conjecture for octahedral fully augmented links without half-twists.

\section{Fully augmented links and shadow links}\label{Sec:FALandFSL}

In this section, we give a geometric proof that octahedral fully augmented links are instances of fundamental shadow links. We begin by reviewing constructions. 

\subsection{Fully Augmented Links} \label{Sec:FAL}
We introduce fully augmented links and describe their key properties, following the exposition of~\cite{PurcellFullyAugLinks}; see also~\cite[Chapter~7]{JessicaBook}.

\begin{definition}
A \emph{flat fully augmented link} is a link in $S^3$ consisting of two types of components:
\begin{enumerate}
\item Components lying embedded on the plane of projection, called \emph{knot strands};
\item Unknotted components, perpendicular to the plane of projection, called \emph{crossing circles}. Each crossing circle bounds a disc punctured by exactly two knot strands; this is called a \emph{crossing disc}.
\end{enumerate}
A \emph{fully augmented link} is obtained from a flat fully augmented link by adding single crossings, or \emph{half-twists}, between two knot strands running through a crossing disc, for some (possibly empty) subset of the crossing discs.
A fully augmented link is \emph{reduced} if its diagram is connected, nonsplit, prime, and there are no parallel crossing circles.
\end{definition}

\begin{remark}
In the literature, fully augmented links are often described as links obtained by augmenting prime, twist-reduced knot diagrams, as in~\cite{PurcellFullyAugLinks}, since there are applications to the geometric properties of such knots after Dehn filling. Here, we do not consider Dehn fillings. Our definition coincides with that of~\cite{PurcellFullyAugLinks} after removing pairs of crossings from the augmented diagrams. 
\end{remark}

Fully augmented links have a polyhedral decomposition of their complements, described by Agol and Thurston~\cite[Appendix]{LackenbyAltLinks}. We summarise in the following proposition. Although the proof appears in~\cite{LackenbyAltLinks, PurcellFullyAugLinks, JessicaBook}, we also include a summary here. 

\begin{proposition} \label{Prop:FALpolydecomp}
Let $L$ be a fully augmented link. The link complement $S^3\setminus L$ can be decomposed into two identical ideal polyhedra with the following properties:
\begin{enumerate}
\item Faces of the polyhedra are checkerboard coloured. Shaded faces are triangles corresponding to crossing discs, and white faces correspond to components of the projection plane.
\item All ideal vertices are 4-valent.
\item Each edge class in the polyhedral gluing of the link complement contains exactly 4 edges. 
\end{enumerate}
\end{proposition}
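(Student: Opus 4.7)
The plan is to construct the polyhedral decomposition explicitly, following the Agol--Thurston procedure referenced in the statement, and verify each claimed property from the construction. First I would reduce to the flat case: a half-twist at a crossing disc can be removed by a homeomorphism of $S^3$ supported in a small neighbourhood of the crossing disc that rotates the disc by $\pi$. Thus the complements of a fully augmented link and of its underlying flat diagram are homeomorphic, and the polyhedral decomposition is combinatorially the same, differing only in a half-twist applied to the gluing of shaded faces across crossing discs where half-twists occur. Assume henceforth that $L$ is flat.

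Next, cut $S^3$ along the projection plane $P \cong S^2$, producing two closed $3$-balls $B_+$ and $B_-$ interchanged by reflection in $P$. These will be the two identical ideal polyhedra. Each crossing disc $D$ meets $P$ in an arc containing both knot-strand punctures of $D$, so it is cut by $P$ into two half-discs $D_\pm \subset B_\pm$. Each $B_\pm$ inherits a face structure: the white faces are the regions of $P \setminus L$ cut out by the knot strands and by the arcs $D \cap P$ ranging over all crossing discs; the shaded faces come from the half-crossing-discs $D_\pm$. After the identifications coming from the polyhedral decomposition (collapsing the bigon regions bordered by the knot strands and the crossing-circle punctures), each shaded face becomes an ideal triangle with three ideal vertices, yielding property~(1). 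Properties~(2) and~(3) then follow from local combinatorial inspection: around each ideal vertex exactly four faces meet, alternating white--shaded--white--shaded and giving valence~$4$; and each edge lies in an equivalence class of size exactly four under the gluings pairing $B_+$ with $B_-$ across $P$ and pairing $D_+$ with $D_-$ across each crossing disc.

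The main obstacle is justifying the triangle structure of the shaded faces and carefully tracing through the edge identifications. A priori, the boundary of a half-crossing-disc has four ideal vertices (two crossing-circle punctures and two knot-strand punctures) and so looks like a quadrilateral; a subtle collapse argument, using the fact that the short sub-arcs of $P$ adjacent to a crossing-circle puncture each bound a bigon in the decomposition, is needed to produce the triangle. Both of these steps amount to a delicate but essentially routine combinatorial bookkeeping at the crossing discs, and detailed versions appear in the references cited in the paragraph preceding the proposition. My proof would follow the approach taken there, adapting the exposition to make the resulting polyhedron structure transparent for use in the geometric argument in the next subsection.
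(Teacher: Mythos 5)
Your overall strategy — cut along the projection plane, handle the crossing discs, collapse the link remnants to ideal vertices, and read off the face/vertex/edge combinatorics — is exactly the Agol--Thurston decomposition that the paper summarises, so the skeleton is right. However, there are genuine gaps.

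First, the opening reduction to the flat case is not valid. The complement of a fully augmented link with a half-twist is in general \emph{not} homeomorphic to the complement of its underlying flat diagram: for example, the Borromean rings and the Borromean twisted sisters are distinct hyperbolic manifolds, yet differ exactly by inserting half-twists. The map you describe (a rotation of the crossing disc supported in a neighbourhood, identity near the ends) is isotopic to the identity and so does not change the link; a genuine Dehn twist along the crossing disc changes the number of crossings by two, not by one. What the paper does instead is unwind the half-twist during the construction and then record that the \emph{gluing} of the resulting shaded faces is modified by a reflection fixing the crossing-circle vertex; this produces the same two polyhedra with the same properties (1)--(3), but a different identification space, so you cannot simply ``assume $L$ is flat'' and forget about it.

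Second, your construction omits the essential step of slicing each half-crossing-disc into two parallel copies before collapsing. After cutting along the projection plane, the half-disc $D_+$ is properly embedded in $B_+$, not on its boundary, so it cannot directly serve as a face; you must cut $B_+$ along $D_+$, after which each crossing disc contributes \emph{two} shaded triangles to each polyhedron. Without this, the count of shaded faces, of edges, and of the gluings is off, and the later triangle-move/dimer structure used in Section~3 would not appear. Relatedly, your description of the face gluing is incorrect: the shaded faces are \emph{not} glued $D_+ \leftrightarrow D_-$ across the projection plane. Rather, the two shaded faces within $B_+$ coming from the same crossing disc are glued to each other, by folding across the ideal vertex of the crossing circle (with a reflection if there is a half-twist), and only the white faces are glued from $B_+$ to $B_-$. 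With the corrected gluing the edge-class count of $4$ arises from the two copies in $B_\pm$ each identified once under a white gluing and once under a shaded gluing; the bookkeeping in your last paragraph would not close up as stated.

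Finally, your worry about a ``subtle collapse argument'' to turn a quadrilateral into a triangle is not really the issue. The two points where the crossing circle meets the projection plane are the endpoints of a single arc of the crossing circle in $B_\pm$, and that whole arc collapses to one ideal vertex; together with the two knot-strand punctures this gives exactly three ideal vertices, i.e.\ a triangle, immediately. The genuinely fiddly step is the slicing/doubling of the crossing discs, which is the step your write-up leaves implicit.
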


\begin{proof}
First, cut along the projection plane to slice the complement into two identical pieces. For each piece, slice each remnant of a crossing disc up the middle, to replace it with two parallel copies. If the crossing disc is adjacent to a half-twist, unwind the half-twist by reflecting the crossing disc in a vertical axis. Then collapse each remnant of the link into a single ideal vertex. This gives two identical polyhedra, one from each piece coming from slicing the projection plane in half. This process is shown in Figure~\ref{Fig:polydecomp}. Note that each shaded face is adjacent to exactly one crossing circle. Observe also that ideal vertices meet two shaded and two white faces, hence are 4-valent. Finally, observe that any edge lies in the intersection of white and shaded faces; four of these meet in the fully augmented link. 

\begin{figure}
\centering
\includegraphics{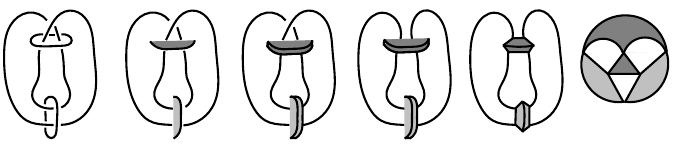}
\caption{The steps to obtain a polyhedral decomposition of the link complement of a fully augmented link.}
\label{Fig:polydecomp}
\end{figure}

One can obtain $S^3\setminus L$ from the two polyhedra by reversing the decomposition procedure as follows. Shaded triangles are glued to shaded faces on the same polyhedron in pairs by folding across a vertex corresponding to a crossing circle. If there are half-twists, the gluing is modified slightly; shaded triangles are still glued to shaded faces on the same polyhedron in pairs across a vertex corresponding to a crossing circle, but the gluing is by reflection fixing the ideal vertex coming from the crossing circle. Finally, glue corresponding white faces of the two polyhedra. That is, reflect across the white faces. 
\end{proof}

Note that our gluing for half-twists varies slightly from \cite{PurcellFullyAugLinks}, but gives an equivalent 3-manifold. 

In~\cite[Theorem~6.1]{PurcellCuspShapes}, it is shown that the complement of a reduced fully augmented link with at least two crossing circles admits a complete hyperbolic structure. This is proven by showing that the ideal polyhedra obtained in \refprop{FALpolydecomp} correspond to totally geodesic hyperbolic polyhedra, which glue to give the link complement. The totally geodesic polyhedra are obtained using circle packings, as follows. 

\begin{definition}
A \emph{circle packing} is a finite collection of Euclidean circles either in $\mathbb{R}^2$ or $S^2$ which meet only in points of tangency. The \emph{nerve} of a circle packing is the graph obtained by placing a single vertex in each circle and an edge between two vertices whenever the corresponding circles are tangent.
\end{definition}

\begin{lemma}[Lemma~2.3 of \cite{PurcellFullyAugLinks}] \label{Lem:FALtocirclepacking}
Given a hyperbolic fully augmented link $L$, the polyhedral decomposition of $S^3\setminus L$ corresponds to a circle packing of $S^2$ whose nerve gives a triangulation of $S^2$. The nerve has the following properties:
\begin{itemize}
\item Each edge of the nerve has distinct endpoints.
\item No two vertices are joined by more than one edge.
\end{itemize}
\end{lemma}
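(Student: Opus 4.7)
The plan is to read the circle packing directly off the hyperbolic structure provided by \refprop{FALpolydecomp}. Since $L$ is hyperbolic, its complement decomposes into two totally geodesic ideal polyhedra in $\HH^3$ of the combinatorial type described there. Working in the upper half-space model with $\partial\HH^3 = S^2$, each white face of one such polyhedron lies on a totally geodesic hyperplane whose closure meets $S^2$ in a Euclidean circle. First I would verify that these circles form a circle packing of $S^2$: distinct white faces of one polyhedron are disjoint in the polyhedron's interior, and when their closures share a point of $S^2$, that point must be an ideal vertex, at which the two hyperplanes meet only on $\partial\HH^3$. This forces the boundary circles to be tangent rather than to cross transversely.

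Next I would construct the nerve. Place one vertex in the disc bounded by each circle, giving one vertex per white face. By \refprop{FALpolydecomp}, every ideal vertex of the polyhedron is $4$-valent with two white and two shaded faces meeting in checkerboard order, so each ideal vertex produces exactly one tangency of two distinct white-face circles; draw the corresponding nerve edge through that tangency. The $2$-cells of the resulting graph on $S^2$ are precisely the triangular interstices of the packing, which are in bijection with the shaded faces of the polyhedron. Since every shaded face is a triangle, every face of the nerve is a triangle, and a brief Euler characteristic count (using $4V = 2E$ from $4$-valence together with $V-E+F=2$ for the polyhedron on $S^2$) confirms that the nerve is a genuine triangulation of $S^2$.

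It then remains to rule out self-edges and multi-edges. A self-edge would say a single white face meets itself at an ideal vertex on $S^2$. Tracing this back through the regluing of \refprop{FALpolydecomp}, such a configuration reassembles either into a sphere in $S^3 \setminus L$ meeting $L$ in at most two points, contradicting nonsplittedness or primeness of the diagram, or into a crossing circle bounding an obvious compressing disc disjoint from $L$, violating reducedness. A multi-edge similarly forces two distinct white faces to share two ideal vertices; under the regluing, the resulting annular region between them realises a pair of parallel crossing circles, again excluded by the reducedness hypothesis.

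The step I expect to be the main obstacle is this last one: systematically translating each degenerate incidence among white faces of the abstract polyhedron into a forbidden feature of the diagram. The case split is governed by whether the coincident ideal vertices come from crossing-circle remnants or from knot-strand punctures, and by whether the white faces arise from the same or opposite sides of the projection plane. Once this dictionary is in hand, every potential failure of the nerve is matched to a property excluded by $L$ being hyperbolic (hence reduced), and the lemma follows. The Koebe--Andreev--Thurston circle packing theorem is not logically needed for the statement, but may be invoked at the end to note that the resulting packing is unique up to M\"obius transformation given its nerve, which dovetails with Mostow--Prasad rigidity of $S^3\setminus L$.
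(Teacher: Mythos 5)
The paper does not prove this lemma; it is quoted verbatim as Lemma~2.3 of~\cite{PurcellFullyAugLinks}, and the only exposition that follows is the two-sentence description of how the circle packing is read off the polyhedral decomposition (one circle per white face, one tangency per $4$-valent ideal vertex, shaded triangles as interstices), which your construction of the nerve matches exactly.

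That said, your route differs from the one in the cited source in direction of deduction, and it is worth flagging. The standard argument (and the way the paper presents the surrounding material) is combinatorics-first: one extracts the \emph{abstract} nerve graph from the combinatorial polyhedron of \refprop{FALpolydecomp}, verifies directly that it is a triangulation of $S^2$ satisfying the two bullet points using only the diagram's reducedness, and then invokes the Koebe--Andreev--Thurston theorem to \emph{produce} a circle packing, hence the geodesic polyhedra, hence the hyperbolic structure. You instead \emph{start} from the hyperbolic structure and read the circles off a geodesic realization of the polyhedra. This is logically fine given the lemma's hypothesis that $L$ is hyperbolic, but it quietly uses a nontrivial fact you do not state: that the combinatorial polyhedra of \refprop{FALpolydecomp} are realized by totally geodesic ideal polyhedra in $\HH^3$. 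That requires the reflection symmetries of $S^3 \setminus L$ through the projection plane and the crossing discs (an isometric involution has totally geodesic fixed set), which is the content of the discussion around \cite[Theorem~6.1]{PurcellCuspShapes} rather than a consequence of hyperbolicity alone. The tradeoff is that your approach makes the tangency structure geometrically transparent, while the combinatorial approach makes the circle packing theorem do the heavy lifting and is what is actually needed when one is \emph{proving} hyperbolicity rather than assuming it.

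Two further points. First, the Euler-count sentence as written only checks consistency ($W + S = V + 2$ follows from $4$-valence and Euler's formula for the polyhedron); what actually makes the nerve a triangulation is the geometric identification of the complementary $2$-cells with the triangular interstices/shaded faces, which you do state, so this is a phrasing issue rather than a gap. Second, the self-edge and multi-edge exclusion is the genuine content of the two bullet points and you have only sketched it; you correctly identify it as the main obstacle. The dictionary you outline (self-edge $\Rightarrow$ sphere meeting $L$ in $\le 2$ points or trivially-bounding crossing circle; double edge $\Rightarrow$ parallel crossing circles) is the right one, but you should also say explicitly that these forbidden configurations contradict hyperbolicity (they produce essential spheres or annuli), since the lemma's hypothesis is ``hyperbolic'', not ``reduced''; the implication hyperbolic $\Rightarrow$ reduced is what lets you cash in the dictionary. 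With that implication spelled out and the case analysis over knot-strand versus crossing-circle ideal vertices actually carried through, the argument closes.
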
 

The circle packing in \reflem{FALtocirclepacking} is obtained by taking a circle for each white face of the ideal polyhedral decomposition. The nerve therefore consists of vertices for each white face, with edges corresponding to ideal vertices of the ideal polyhedron. Since edges then enclose shaded faces, which are ideal triangles, the nerve is a triangulation of $S^2$.
View the boundary at infinity of $\mathbb{H}^3$ as the sphere $S^2_{\infty}$. Each circle in the circle packing gives a geodesic plane in $\HH^3$. The interstitial regions between circles form curvilinear triangles corresponding to shaded faces. There is a unique circle through the three ideal vertices of an interstial circle; call this a shaded circle in $\HH^3$. The region bounded by hemispheres coming from white circles and shaded circles gives a totally geodesic polyhedron. 
A circle packing and its nerve is shown in Figure~\ref{Fig:circlepack}.

\begin{figure}
\centering
\includegraphics{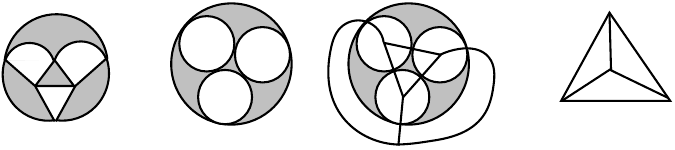}
\caption{Polyhedron (leftmost), circle packing obtained from the polyhedron (second from left), circle packing with its nerve superimposed (second from right), nerve (rightmost).}
    \label{Fig:circlepack}
\end{figure}

The next lemma is a converse to \reflem{FALtocirclepacking}.
\begin{lemma}[Lemma~2.4 of \cite{PurcellFullyAugLinks}]\label{Lem:graphtoFAL}
Let $\gamma$ be a triangulation of $S^2$ such that no two vertices are joined by more than one edge and each edge has distinct ends. Choose a collection of ``red'' edges such that each triangle of $\gamma$ meets exactly one red edge. Then there is a hyperbolic fully augmented link associated to this painted graph; it has $\gamma$ as its nerve.
\end{lemma}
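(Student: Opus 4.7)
The plan is to invert the construction preceding \reflem{FALtocirclepacking}. First, I would apply the Koebe--Andreev--Thurston circle packing theorem to $\gamma$: the hypotheses that no two vertices are joined by more than one edge and that each edge has distinct ends mean $\gamma$ is a simple triangulation of $S^2$, which is precisely the combinatorial condition required by the theorem. This produces a circle packing of $S^2$, unique up to M\"obius transformation, whose nerve is $\gamma$. From this packing I would build a totally geodesic ideal polyhedron $P\subset\HH^3$ exactly as in the discussion after \reflem{FALtocirclepacking}: each circle extends to a geodesic hemisphere carrying a white face of $P$, and each interstitial triple of mutually tangent circles determines a unique shaded circle through their three tangency points carrying a shaded triangular face of $P$.

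Next, I would use the red-edge data to reconstruct the face gluings of \refprop{FALpolydecomp}. Take two isometric copies $P_+$ and $P_-$ of $P$, and glue each white face of $P_+$ to the corresponding white face of $P_-$ by reflection. Within each copy separately, for every red edge $e$ of $\gamma$, shared by triangles $T_1$ and $T_2$, glue the shaded face corresponding to $T_1$ to the shaded face corresponding to $T_2$ by folding across the ideal vertex of $P$ associated to $e$. The hypothesis that every triangle of $\gamma$ meets exactly one red edge guarantees each shaded face has a unique partner in this pairing, so the gluing is well-defined. An application of the Poincar\'e polyhedron theorem then yields a complete hyperbolic $3$-manifold $M$; the dihedral angle condition reduces to the observation that white and shaded hemispheres meet orthogonally along each edge of $P$, so that four such right angles combine into each edge cycle of $M$, mirroring \refprop{FALpolydecomp}(3).

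To conclude, I would exhibit an explicit flat fully augmented link $L\subset S^3$ whose complement is isometric to $M$ and whose nerve is $\gamma$. Embed $\gamma$ on the plane of projection $S^2\subset S^3$; for each red edge of $\gamma$ attach a crossing circle perpendicular to $S^2$ encircling only that edge; then realise the knot strands on $S^2$ by routing them through the crossing discs so that each crossing disc is pierced by exactly the two strands coming from the non-red edges of the two triangles meeting its red edge. Applying \reflem{FALtocirclepacking} in the forward direction to this diagram recovers the same circle packing and polyhedron $P$ with the same gluings constructed above, so $S^3\setminus L$ is isometric to $M$. The main obstacle I anticipate is this final diagrammatic step: verifying that the combinatorial recipe genuinely produces a flat fully augmented link diagram in $S^3$---with knot strands closing up consistently into link components, each crossing disc pierced by exactly two strands, and the ambient manifold the $3$-sphere---so that \reflem{FALtocirclepacking} applies to pin down the match between the two polyhedral decompositions.
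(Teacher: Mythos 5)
Your route is genuinely different from the paper's. The paper (via the cited reference~\cite{PurcellFullyAugLinks}) constructs the link diagram directly from the painted graph---crossing circles parallel to red edges, two short strands through each, joined across non-red edges---observes that the result is a reduced fully augmented link with nerve $\gamma$, and then cites the hyperbolicity of reduced fully augmented links with at least two crossing circles (Theorem~6.1 of~\cite{PurcellCuspShapes}). You instead build the hyperbolic manifold $M$ first from the circle packing and the Poincar\'e polyhedron theorem, and only then try to realise $M$ as a link complement. The first half of your argument essentially re-derives that hyperbolicity theorem and, while not wrong, is redundant given the available citation.

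The gap you flag at the end is not a side verification; it carries the full weight of the lemma. Checking that the recipe genuinely closes up into a reduced flat fully augmented link diagram on the projection plane, with each crossing disc pierced by exactly two strands and with nerve $\gamma$, is precisely the content of Purcell's proof, and your proposal leaves it open. Worse, the way you propose to pin down the match---applying \reflem{FALtocirclepacking} in the forward direction to your candidate $L$---is circular: that lemma takes as hypothesis a link that is \emph{already known to be hyperbolic}, which is part of what you are trying to establish. The non-circular route goes through \refprop{FALpolydecomp}, which is purely topological and needs no hyperbolicity hypothesis: one should check that the topological polyhedral decomposition of $S^3\setminus L$ has the same combinatorics (checkerboard faces, four-valent ideal vertices, the four-edge edge cycles) as your geodesic polyhedron $P$ with its face gluings, from which homeomorphism to $M$ and hence the hyperbolic structure transport over. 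Until that combinatorial check is carried out, the claimed isometry $S^3\setminus L\cong M$ is asserted rather than proved.
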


\begin{remark}
The collection of red edges in \reflem{graphtoFAL} is called a \textit{dimer}.
\end{remark}

Given a triangulation of $S^2$ as described in \reflem{graphtoFAL} with a dimer, one constructs a fully augmented link as follows. On each edge in the dimer, place a crossing circle parallel to that edge. Draw two short strands through each crossing circle. Then join two strands together across an edge which is not in the dimer, such that each strand is joined to the nearest strand from another crossing circle. An example is shown in Figure~\ref{Fig:graphFAL}. Note that at each edge in the dimer, one may insert a half-twist or not.
\begin{figure}
\centering
\includegraphics{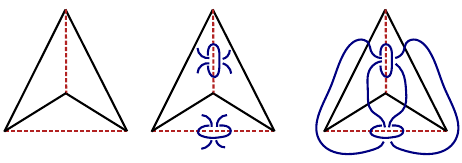}
\caption{Given a triangulation of $S^2$ and a choice of red edges (shown dashed) as in \reflem{graphtoFAL}, one can obtain a fully augmented link as shown.}
    \label{Fig:graphFAL}
\end{figure}

The \textit{dual graph} of a triangulation is obtained by taking a vertex for each face and putting an edge between two vertices whenever the corresponding faces are adjacent across an edge. Given a triangulation $G$ as described in \reflem{graphtoFAL}, one can obtain the corresponding fully augmented link using the dual graph $G'$ of $G$ as follows. First, the dimer of $G'$ is given by the edges of $G'$ which intersect the dimer of $G$. For each edge in the dimer of $G'$, replace the edge by two strands running parallel to the edge and a crossing circle perpendicular to the edge, such that the two strands pass through the crossing circle. Connect the strands following along the non-dimer edges of the graph as shown in Figure~\ref{Fig:dual}. This gives a fully augmented link. Again, at each edge in the dimer there is the choice of whether or not to insert a half-twist.
\begin{figure}
    \centering
    \includegraphics{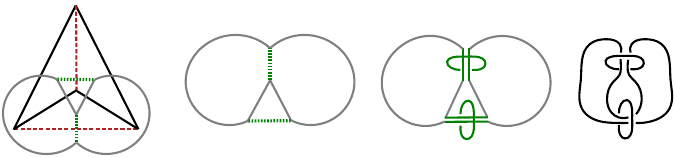}
    \caption{An example of obtaining a fully augmented link from the dual graph of a triangulation. The leftmost diagram shows the triangulation (black) and its dimer (red dashed), the dual graph (grey) and its dimer (green dotted). The following three diagrams show the process of obtaining the fully augmented link starting with the dual graph.}
    \label{Fig:dual}
\end{figure}

\begin{definition}
A fully augmented link with polyhedral decomposition obtained by gluing regular ideal octahedra is called \textit{octahedral}.
\end{definition}

\subsection{Fundamental Shadow Links} 

Fundamental shadow links are a family of links in connected sums of copies of $S^1\times S^2$, first considered by Costantino and Thurston \cite{CostThur3Mani}, who defined them using Turaev's theory of shadows \cite{TuraevShadows}. 
The building blocks of fundamental shadow links are a collection of 3-balls, each with four shaded discs on its boundary and six arcs connecting them, as in Figure~\ref{Fig:block}. Take $c$ of these building blocks and glue them along the shaded discs, in such a way that the endpoint of each arc is glued to the endpoint of another arc, possibly the same arc. This yields a genus $c+1$ handlebody $H_L$ (possibly non-orientable) with a link in its boundary, coming from the arcs. Take the orientable double of this handlebody, that is, glue $H_L$ to an identical copy of itself by the identity map on $\bdy H_L$. 
We obtain a manifold $M_c:=\#^{c+1}(S^1\times S^2)$ with a link $L$ inside. The link $L$ is called a \emph{fundamental shadow link}. The number $c$ is called its \emph{shadow complexity}. 

\begin{figure}
\centering
\includegraphics{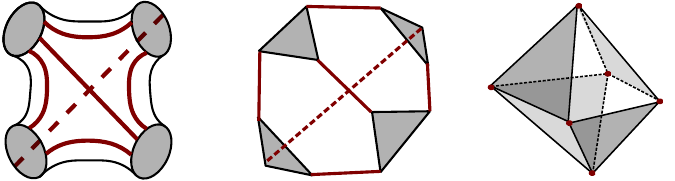}
\caption{Left: Building block of a fundamental shadow link; the six arcs are shown in (thicker) red. Centre: The block is homeomorphic to a truncated tetrahedron. Right: Truncating maximally, removing all edges, gives an ideal octahedron.}
\label{Fig:block}
\end{figure}

\begin{remark}
Note each building block is homeomorphic to a tetrahedron with its vertices truncated.
The shaded discs correspond to triangular faces coming from the truncation; specifying a gluing of building blocks is equivalent to specifying gluing isometries for each pair of triangular faces. There are six possible ways to glue two triangles together, described by the dihedral group on three elements $D_3$. We call the faces of a truncated tetrahedron coming from the truncation \emph{shaded} and the other faces \emph{white}.
\end{remark}

We next describe a relationship between a fundamental shadow link and a 4-valent graph with gluing information. This will specify a fundamental shadow link $L$ up to Dehn twists about meridians on the corresponding handlebody $H_L$. Since such Dehn twists give homeomorphisms of the link complement, our graph relationship will suffice to uniquely determine the link complement. 
Descriptions of fundamental shadow links in terms of 4-valent graphs can be found in the literature~\cite{CostThur3Mani,Cost6jSymb}; see \cite{Kumar} for a summary.

Let $M$ be the complement in $\#^{c+1}(S^1\times S^2)$ of a fundamental shadow link $L$, obtained by gluing $c$ building blocks.
Construct a labelled 4-valent graph from $M$ as follows. Begin with $c$ disconnected vertices, one for each building block, which we view as a truncated tetrahedron. Draw edges between vertices according the gluing of the shaded faces: that is, if two shaded faces are glued together, draw an edge between the vertices corresponding to truncated tetrahedra with those faces. Finally, label each edge with an element of $D_3$ according to the way in which the corresponding shaded faces are glued to obtain a labelled 4-valent graph. 

Conversely, given a 4-valent graph $G$ with $c$ vertices and with its edges labelled by elements of $D_3$, obtain the complement of a fundamental shadow link as follows. To each vertex $v$ of $G$, associate a truncated tetrahedron $T_v$. Associate each shaded face of $T_v$ to a distinct edge incident to $v$. If there is a loop incident to $v$, associate each endpoint of the loop to distinct triangular faces. Then each edge $e$ of $G$ has two shaded triangular faces associated to it. Glue together the two shaded faces associated to $e$, where the gluing is determined by the label on $e$. This gives rise to a genus $c+1$ handlebody. Finally, double this handlebody to obtain $\#^{c+1}(S^1\times S^2)$ with a link inside, coming from the edges of the truncated tetrahedra. The link will be a fundamental shadow link, since its complement is obtained by gluing the building blocks of \reffig{block}.

\begin{definition} \label{Def:gluingGraph}
A \textit{gluing graph} is a 4-valent graph $G$ with edges labelled by elements of $D_3$. The edge labels are called the \emph{gluing information} of $G$.
\end{definition}

Observe that if the vertices of a tetrahedron are truncated maximally, as on the right of \reffig{block}, the result is an octahedron. Every fundamental shadow link can be given a hyperbolic structure by realising the building blocks as regular ideal octahedra, and gluing according to the gluing graph. The following proposition is then an immediate consequence. 

\begin{proposition}[Costantino and Thurston~\cite{CostThur3Mani}]\label{Prop:shadowlinkvolume}
Let $L\subset M_c$ be a fundamental shadow link. Then the complement $M_c\setminus L$ can be given a complete hyperbolic metric with volume $2cv_8$, where $v_8$ is the volume of a regular ideal hyperbolic octahedron, and $c$ is the shadow complexity. \qed
\end{proposition}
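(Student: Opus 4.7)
The plan is to build the hyperbolic structure explicitly from regular ideal hyperbolic octahedra, then verify completeness and compute the volume by a straightforward count.

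First I would realize each of the $c$ building blocks as a regular ideal hyperbolic octahedron, using the observation made in the excerpt (and visible in \reffig{block}) that maximally truncating a tetrahedron produces an octahedron: the four shaded faces from truncation become four of the octahedron's ideal triangular faces (a $D_2$-symmetric collection, no two sharing an edge), and the four white faces of the truncated tetrahedron become the remaining four ideal triangular faces. Because the octahedron is regular, all eight faces are congruent ideal triangles and every pairwise face identification prescribed by an element of $D_3$ can be implemented by an isometry of $\HH^3$. Using the gluing graph (\refdef{gluingGraph}), I glue the $c$ octahedra along shaded faces exactly as prescribed; this gives a hyperbolic structure on the handlebody-with-link-complement $H_L\setminus L$ whose boundary consists of the totally geodesic white faces.

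Next I would perform the doubling. Taking a second, identical copy of the glued-up structure and identifying it to the first via the identity map on the white faces is a totally geodesic doubling: because the white faces are totally geodesic ideal triangles, the reflection across each of them extends to a global isometry, so the two hyperbolic pieces fit together smoothly along their common totally geodesic boundary. The resulting manifold is $M_c\setminus L$ with a hyperbolic metric built from $2c$ regular ideal octahedra. Completeness then reduces to a cusp check: each link component is contained in a neighborhood of edges of the octahedra, and the link of each ideal vertex in the glued structure is a Euclidean surface (a torus in $M_c\setminus L$), since every vertex link in a regular ideal octahedron is a Euclidean square and the face identifications act on these squares by Euclidean isometries. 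A standard argument (e.g.\ the Poincar\'e polyhedron theorem applied to the union of octahedra) then gives a complete hyperbolic structure.

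Finally, the volume is additive over the $2c$ regular ideal octahedra, each contributing $v_8$, so $\vol(M_c\setminus L)=2cv_8$.

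The one step requiring care is the completeness/cusp verification after doubling. The potential obstacle is that a priori the gluing information from $D_3$ could twist the Euclidean vertex-link structure; however, since every face identification is a hyperbolic isometry between regular ideal triangles, the induced maps on the Euclidean squares at each ideal vertex are Euclidean similarities, and the doubling contributes only reflections, so the developing map of each cusp cross-section lands in the Euclidean plane. This is the one place where some justification is needed, but the argument is essentially the same as the verification made for fully augmented links via their circle packings, and it follows the framework of Costantino--Thurston directly.
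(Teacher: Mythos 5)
Your argument is correct and follows essentially the same approach the paper takes (and attributes to Costantino--Thurston): realize each building block as a regular ideal hyperbolic octahedron, glue by isometries according to the gluing graph, double across the totally geodesic white faces, and count $2c$ copies of $v_8$. The paper states this as an immediate consequence of the octahedron observation and does not spell out the completeness check; your extra paragraph on cusp cross-sections and the Poincar\'e polyhedron theorem is a reasonable fleshing-out of that step rather than a different route.
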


\subsection{Correspondence between links}
We now prove \refthm{octareFSL}, that octahedral fully augmented links are fundamental shadow links.

\begin{theorem}\label{Thm:octareFSL} 
Let $L$ be an octahedral fully augmented link with $c$ crossing circles. Then $S^3\setminus L$ is isometric to $\#^c (S^1\times S^2)\setminus \Tilde{L}$ for some fundamental shadow link $\Tilde{L}$.
\end{theorem}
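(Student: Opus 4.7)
The plan is to use the circle packing structure of the octahedral fully augmented link $L$, together with the hypothesis that its polyhedral decomposition is built of regular ideal octahedra, to produce a gluing graph whose associated fundamental shadow link has an isometric complement.

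First, apply \refprop{FALpolydecomp} and \reflem{FALtocirclepacking} to write $S^3\setminus L = P_+\cup P_-$, where $P_\pm$ are two identical ideal polyhedra glued along white faces by reflection, with shaded faces of each $P_\pm$ paired across crossing-circle ideal vertices, and with an associated circle packing on $S^2_\infty$ whose nerve is a triangulation of $S^2$ and whose dimer marks the crossing-circle edges. Since $L$ is octahedral, each $P_\pm$ subdivides further into $c-1$ regular ideal octahedra, for $2(c-1)$ octahedra in total, as demanded by the volume $\vol(S^3\setminus L)=2(c-1)v_8$.

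Second, read off a gluing graph $G$ from the octahedral decomposition of $P_+$. Let the vertices of $G$ be the $c-1$ octahedra of $P_+$, and let each edge of $G$ record a pairing of two shaded faces. Such pairings come in two types: boundary shaded faces of $P_+$ glued in pairs by a crossing-circle fold, with $D_3$ label given by the order-two rotation about the crossing-circle ideal vertex (or its associated reflection when a half-twist is present, following the proof of \refprop{FALpolydecomp}); and internal shaded faces shared between adjacent octahedra inside $P_+$, carrying the identity $D_3$ label. A face count $4(c-1) = 2\cdot 2(c-1)$ ensures that $G$ is a $4$-valent graph on $c-1$ vertices with $2(c-1)$ edges and $D_3$ labels, exactly the data of \refdef{gluingGraph}.

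Third, verify that the fundamental shadow link $\tilde L$ associated to $G$ has complement isometric to $S^3\setminus L$. By \refprop{shadowlinkvolume} and the construction of a fundamental shadow link, the $c-1$ building blocks, realized as regular ideal octahedra, are glued along shaded faces as prescribed by $G$ to build a handlebody $H$, which is then doubled to produce $\#^c(S^1\times S^2)\setminus\tilde L$. The doubling corresponds precisely to the white-face reflection gluing of $P_+$ to $P_-$ in $S^3\setminus L$. Both manifolds are therefore assembled from the same $2(c-1)$ regular ideal octahedra, with identical shaded- and white-face gluings, yielding the isometry $S^3\setminus L\cong \#^c(S^1\times S^2)\setminus\tilde L$.

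The hardest step is the second: extracting the octahedral subdivision canonically from the circle packing. Whenever a vertex of the nerve has valence greater than $3$, the corresponding white polygonal face of $P_\pm$ must be subdivided by diagonals into white triangular faces of octahedra, and one must argue that regularity of the octahedra forces a unique such subdivision that is consistent with the mirror subdivision in $P_-$ under the white-face gluing. Careful tracking of the $D_3$ labels in the presence of half-twists, and verification that internal shaded-face gluings line up so that $G$ is indeed $4$-valent (no octahedron is left with an unglued shaded face), also require attention.
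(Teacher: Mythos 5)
Your direct, non-inductive extraction of a gluing graph $G$ from the octahedral decomposition of $P_+$ is a legitimate alternative to what the paper does, which is to induct on the number of central subdivisions of the nerve (\refprop{octaLinksCharac}, then \reflem{BorrRingsFALtoFSL} for the base case, \reflem{GraphMove} for the step). If the structure of the octahedral decomposition is in hand, your face count $4(c-1)=2\cdot 2(c-1)$ produces the $4$-valent graph in one shot, and your identification of the doubling with the white-face reflection is correct; unwound, the paper's induction builds exactly this $G$ one graph move at a time.

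However, there is a real gap in the ``hardest step,'' and you have somewhat misidentified it. The issue is not that white polygonal faces of $P_\pm$ must be triangulated by diagonals and that regularity of octahedra forces a unique such choice. Rather, the substantive claim is that in the octahedral decomposition of $P_\pm$, every interface between two adjacent octahedra is a \emph{shaded} triangle, never a white one, and that the boundary shaded faces of $P_\pm$ are exactly the unglued shaded faces of the octahedra. Only with this in hand does each octahedron contribute a $4$-valent vertex (each of its four shaded faces meets exactly one pairing), and only then does the white triangulation come for free (the edges of the interior shaded triangles cut the white polygons of $P_\pm$ into the octahedra's white triangles). This is precisely what \refprop{octaLinksCharac} supplies: the nerve of $L$ is obtained from $K_4$ by central subdivisions, and each central subdivision attaches a new octahedron across a shaded face of the existing polyhedron (\reffig{adding octa}). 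Your sketch uses this structural fact implicitly but never establishes or cites it; the proof does not close without it. A minor further point: you label the fold at a crossing circle without a half-twist by an ``order-two rotation'' of $D_3$, i.e.\ a reflection, whereas the paper's convention (\reflem{BorrRingsFALtoFSL}, \reflem{GraphMove}) assigns the identity to the no-half-twist fold and a reflection to the half-twist fold. This is a bookkeeping convention and does not threaten existence of some $\tilde{L}$, but the $D_3$ labels should be pinned down consistently with the actual gluing isometry if one wants to identify $\tilde{L}$ concretely.
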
 

Our main tool is the following definition and result from \cite{PurcellFullyAugLinks}. 

\begin{definition}
Given a triangle $T$, the \textit{central subdivision} of $T$ is the subdivision obtained by inserting a vertex in the centre of $T$, then adding three edges which run from the new vertex to each of the three vertices of $T$. See Figure~\ref{Fig:adding octa}, left.
\end{definition}

\begin{proposition}[Proposition~3.8 of \cite{PurcellFullyAugLinks}] \label{Prop:octaLinksCharac}
Let $L$ be a fully augmented link with decomposition into two ideal polyhedra isometric to $P$. Let $N$ be the nerve associated with the circle packing of $L$. Then $P$ is obtained by gluing regular ideal octahedra if and only if $N$ is obtained by central subdivision of the complete graph on four vertices. In this case, there are $c-1$ such octahedra, where $c$ is the number of crossing circles in the diagram of $L$.
\end{proposition}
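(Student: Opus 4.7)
The plan is to prove both directions of the biconditional by induction on the combinatorial complexity of the nerve, exploiting a local geometric correspondence: performing a single central subdivision on a triangle $T$ of the nerve $N$ corresponds exactly to attaching a regular ideal octahedron to the polyhedron $P$ along the shaded face corresponding to $T$. Once this correspondence is established, the octahedron count follows by bookkeeping. The graph $K_4$ has $4$ triangles and (via its dimer) $c=2$ crossing circles, corresponding to a single octahedron; each subsequent central subdivision replaces one triangle by three, adding $2$ triangles (hence $1$ crossing circle via the dimer) and $1$ octahedron, yielding $c-1$ octahedra in general.

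For the ($\Leftarrow$) direction, I would induct on the number of central subdivisions used to produce $N$ from $K_4$. The base case identifies the circle packing of four mutually tangent circles on $S^2$ (associated to $K_4$) with a single regular ideal hyperbolic octahedron, using M\"obius equivalence to the standard tetrahedral configuration of circles on $S^2_\infty$. For the inductive step, a central subdivision of a triangle $T$ of $N_k$ inserts a new circle $C_0$ tangent to the three circles $C_1, C_2, C_3$ bounding $T$. I would show that the new polyhedron $P_{k+1}$ is obtained from $P_k$ by attaching a regular ideal octahedron $O$ along the shaded face at $T$. The eight faces of $O$ are: the shaded face $T$ (now internal to $P_{k+1}$); three new shaded faces, corresponding to the three new triangles of the subdivision; three white faces lying in the hemispheres of $C_1, C_2, C_3$; and one new white face in the hemisphere of $C_0$. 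Regularity of $O$ follows because the local configuration of the four circles $C_0, C_1, C_2, C_3$ is M\"obius-equivalent to the standard tetrahedral packing of the base case.

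For the ($\Rightarrow$) direction, suppose $P$ is built from $n\geq 2$ regular ideal octahedra. I would identify a \emph{leaf} octahedron $O$ attached to the rest of $P$ along a single interior shaded face $T$. Removing $O$ produces a polyhedron $P'$ built from $n-1$ regular ideal octahedra, whose nerve $N'$ is (by induction) a central subdivision of $K_4$; re-attaching $O$ realizes one further central subdivision of the triangle of $N'$ corresponding to $T$, so $N$ is also a central subdivision of $K_4$. The main obstacle is guaranteeing the existence of such a leaf octahedron. I expect the argument to use the angle structure of regular ideal octahedra (dihedral angles $\pi/2$, so each interior edge of the decomposition is met by exactly four octahedra), combined with an innermost-circle argument: locate a circle $C_0$ in the packing tangent to exactly three others, lying in the interstitial region bounded by them; the octahedron sitting in the pocket between $C_0$ and those three circles is then a leaf, and its removal inverts the central subdivision step from the other direction.
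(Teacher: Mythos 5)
The paper does not actually prove this proposition; it is quoted as Proposition~3.8 of \cite{PurcellFullyAugLinks}, so your argument can only be compared with the standard circle-packing proof from that reference. Your ($\Leftarrow$) direction and the count are essentially that argument and are sound: any four pairwise tangent circles are M\"obius-equivalent, so the $K_4$ packing cuts out a regular ideal octahedron, and inserting a circle into an interstitial triangle of the packing enlarges $P$ by exactly the pocket region between the old shaded hemisphere and the four new hemispheres, again a regular ideal octahedron by the same rigidity; with $c=2+(\text{number of subdivisions})$ this gives $c-1$ octahedra.

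The ($\Rightarrow$) direction, however, has a genuine gap, and it is the half that carries the content. First, your induction needs a circle tangent to exactly three others, i.e.\ a degree-$3$ vertex of the nerve, and you give no argument that one exists. Triangulations of $S^2$ satisfying the nerve conditions with no degree-$3$ vertex abound (the octahedral and icosahedral graphs, for example), and for such nerves the conclusion is precisely that $P$ is \emph{not} a union of regular ideal octahedra --- which is what must be proved, so ``locate an innermost circle tangent to exactly three others'' assumes the conclusion. The angle bookkeeping you gesture at (all dihedral angles of $P$ and of the regular ideal octahedron are $\pi/2$, so an edge of $P$ meets one octahedron, an edge interior to a face meets two, an interior edge meets four) is the right kind of tool, but it has to be actually run to constrain the nerve; as written it is only a hope. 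Second, even granted a degree-$3$ circle $C_0$, your inductive step removes the pocket octahedron between $C_0$ and its three neighbours and asserts that the remainder $P'$ is a union of $n-1$ regular ideal octahedra. That does not follow from the hypothesis as stated: the assumed decomposition of $P$ into octahedra is abstract and need not contain the pocket as one of its pieces, so $P'$ is not a priori a union of octahedra of the given decomposition, and matching volume $(n-1)v_8$ is not sufficient. To close this you would need to show that a decomposition of $P$ into regular ideal octahedra is forced to be the one cut out by the geodesic planes over the tangency circles (for instance via the edge-angle count above, or a canonicity/uniqueness argument for the decomposition), or else restructure the induction so that it never removes a piece that is not known to belong to the decomposition.
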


\begin{figure}
\centering
\includegraphics{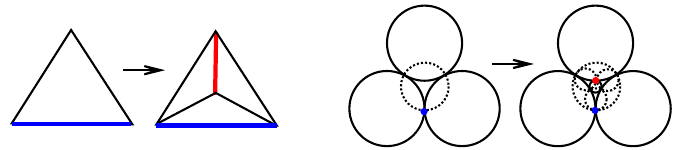}
\caption{Subdividing a triangle in the nerve (left) corresponds to adding a circle to the circle packing (right). A dotted line indicates a shaded face.}
\label{Fig:adding octa}
\end{figure}

Let $L$ be an octahedral fully augmented link with complement decomposing into two polyhedra isometric to $P$. Since $P$ is a union of a finite number of regular ideal octahedra, by \refprop{octaLinksCharac}, the nerve $N$ associated to the circle packing of $L$ is obtained by subdividing the complete graph on four vertices a finite number of times. We will prove \refthm{octareFSL} by induction on the number of times we must subdivide the complete graph on four vertices to obtain $N$. 
If zero times, then $L$ corresponds to the Borromean rings or one of the Borromean twisted sisters; see \reffig{BorrRingsandSisters}.

\begin{figure}
\centering
\includegraphics{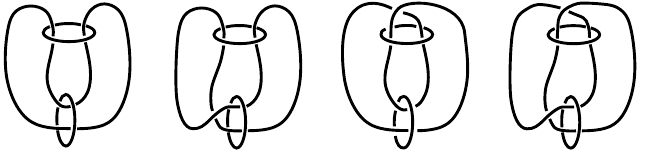}
\caption{Borromean family: Borromean rings (leftmost) and Borromean twisted sisters.}
\label{Fig:BorrRingsandSisters}
\end{figure}

\begin{lemma}\label{Lem:BorrRingsFALtoFSL}
Any fully augmented link of the Borromean family corresponds to a fundamental shadow link with associated graph consisting of a single vertex and two edges that are loops.
\end{lemma}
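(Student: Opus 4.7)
The plan is to explicitly match the polyhedral decompositions on both sides. By \refprop{octaLinksCharac}, a Borromean family link $L$ has its nerve equal to $K_4$ (the case of zero subdivisions), so the polyhedron $P$ is a single regular ideal octahedron, and $S^3\setminus L$ is glued from two copies of $P$ along white faces (corresponding to the reflection across the projection plane), while shaded faces are paired within each copy (corresponding to the two crossing discs). Since the Borromean family has $c=2$ crossing circles, this accounts for all the gluings.

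On the shadow link side, take the candidate gluing graph $G$ with a single vertex and two loops, with some labelling in $D_3$ on each loop. The associated construction has a single building block, i.e.\ a single truncated tetrahedron, whose four shaded triangular faces are paired by the two loops according to the $D_3$ labels; after doubling, one obtains two truncated tetrahedra whose shaded faces are paired within each copy, and whose white faces are glued across copies by the doubling isometry. Realising each building block as a regular ideal octahedron (as in \reffig{block}), the resulting complement in $\#^2(S^1\times S^2)$ also decomposes as two regular ideal octahedra glued with shaded faces paired within each copy and white faces glued across copies. This is structurally the same decomposition as for $S^3\setminus L$.

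It then remains to verify that, for each specific link in the Borromean family, one can choose labels in $D_3$ on the two loops of $G$ so that the shaded-face gluings agree with those prescribed by the fully augmented link decomposition (as described in the proof of \refprop{FALpolydecomp}). For the Borromean rings itself (no half-twists) the gluings are pairings by folding across a crossing-circle vertex, corresponding to specific reflection-type elements of $D_3$; for each twisted sister, the additional half-twist is implemented by post-composing that gluing with an order-two rotation fixing the crossing-circle vertex, which corresponds to another element of $D_3$. Since the only invariant we must match is the homeomorphism type of the complement up to Dehn twists about meridians (as noted in the paper's discussion of \refdef{gluingGraph}), any such element of $D_3$ on each loop is permissible.

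The main obstacle is bookkeeping: making precise the correspondence between (a) the combinatorial gluing of shaded faces of $P$ across crossing-circle vertices, possibly with half-twists, and (b) the $D_3$ labelling of the loops in the gluing graph. In particular, one must verify that the two different shaded-face pairings on $P$ (one per crossing circle) correspond to distinct loops in $G$ (rather than collapsing to a single loop), and that reversing orientation conventions on the truncated tetrahedron does not change the resulting link complement up to isometry. Once this dictionary is established, the isometry between $S^3\setminus L$ and $\#^2(S^1\times S^2)\setminus \tilde L$ follows by identifying the two regular ideal octahedra and their face pairings on both sides.
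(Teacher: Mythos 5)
Your overall strategy matches the paper's: recognise that both $S^3\setminus L$ and the fundamental shadow link complement decompose into two regular ideal octahedra, and translate the shaded-face gluings into a gluing graph with a single vertex and two loops. The observation that the four shaded faces of $P$ pair into two distinct loops, one per crossing circle, and that the white-face gluings match the doubling, is correct and aligns with the paper.

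However, the proposal has a genuine gap in its treatment of the $D_3$ labels. You assert that ``any such element of $D_3$ on each loop is permissible'' because the gluing graph determines the complement only up to Dehn twists. This misreads the paper's remark near \refdef{gluingGraph}: Dehn twists give flexibility \emph{within} a fixed gluing graph (several fundamental shadow links realising the same labelled graph have homeomorphic complements), not across different labellings. Distinct $D_3$ labels generally produce distinct complements, which is precisely what distinguishes the Borromean twisted sisters from the Borromean rings. So the labels must be pinned down, and your guess is off: you say a non-half-twisted crossing corresponds to a ``reflection-type element'' and a half-twist to an ``order-two rotation fixing the crossing-circle vertex,'' but $D_3$ has no order-two rotations (its order-two elements are reflections), and the paper's dictionary is the identity for the no-half-twist fold and a reflection of the shaded triangle fixing the crossing-circle vertex for a half-twist. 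Establishing this dictionary is not mere bookkeeping to be deferred; it is the content of the lemma's proof, and it is carried out in the paper via the circle packing picture and a M\"obius transformation placing one ideal vertex at infinity.
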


\begin{proof}
If $L$ is one of the links in the Borromean family, then its ideal polyhedral decomposition consists of two regular ideal octahedra. In particular, $P$ is a single octahedron, as seen from the circle packing of $L$, shown on the left of Figure~\ref{Fig:borr rings}. The colours of the shaded faces indicate the gluing. Applying a M\"obius transformation taking one of the ideal vertices in the leftmost figure to infinity gives the middle figure. We obtain the 4-valent planar graph shown on the right by taking a 4-valent vertex and connecting its edges according to the gluing of the shaded faces of $P$.

\begin{figure}
\centering
\includegraphics{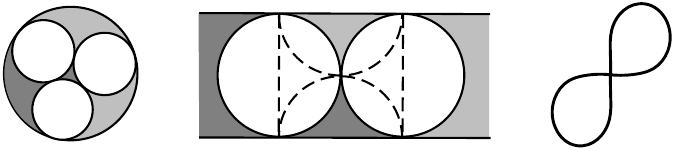}
\caption{The leftmost figure shows the circle packing. Applying a M\"obius transformation taking one of the ideal vertices in the leftmost figure to infinity gives the middle figure. The rightmost figure shows the corresponding 4-valent planar graph.}
\label{Fig:borr rings}
\end{figure}

To turn this 4-valent graph into a gluing graph, view each edge as corresponding to a pair of shaded faces coming from a crossing disc. Label the edges as either the identity isometry, if there is no half-twist adjacent to the corresponding crossing circle, or a reflection if the corresponding crossing circle is adjacent to a half-twist. Here, the reflection is across the ideal vertex of the octahedron shared by the two shaded faces. Thus, when viewing the octahedron as a truncated tetrahedron, the reflection is across the edge that joins the two shaded triangles. Doing so gives a gluing graph $G$. Let $\Tilde{L}$ be the fundamental shadow link obtained from $G$.
Then by construction, $\#^2(S^1\times S^2)\setminus \Tilde{L}$ has identical gluing to $S^3\setminus L$, where we note that reflecting across the white faces in the polyhedral decomposition is equivalent to doubling across the white faces to obtain $\#^2(S^1\times S^2)$.
\end{proof}

\begin{definition}
A \emph{basic graph} is a graph that is a single vertex with two loops, with edges labelled by elements of $D_3$. A \emph{Borromean basic graph} is a basic graph corresponding to fully augmented links in the Borromean family.
\end{definition}

By \reflem{BorrRingsFALtoFSL}, Borromean basic graphs have each edge labelled either by the identity or a reflection giving a half twist.
Note that there are three Borromean basic graphs, corresponding to the Borromean rings or one of the Borromean twisted sisters. Two of the Borromean twisted sisters have homeomorphic complements. Finally, note that both edges in a Borromean basic graph correspond to shaded faces that glue to form crossing discs. When an octahedral fully augmented link corresponds to a fundamental shadow link, and an edge of the associated graph corresponds to gluing two shaded faces into a crossing disc, we will call the edge a \emph{crossing circle edge}. 

\begin{figure}
\centering
 \[\vcenter{\hbox{
 \begin{overpic}{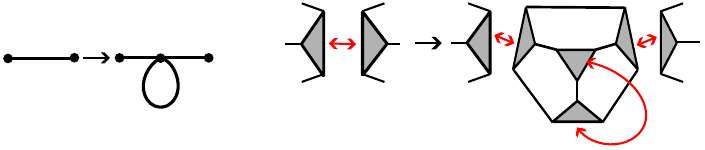}
 \put(0, 14.5){$v_2$}
 \put(5, 10.5){$e$}
 \put(9, 14.5){$v_3$}
 \put(16, 14.5){$v_2$}
 \put(28, 14.5){$v_3$}
 \put(22, 14.5){$v_1$}
 \put(40, 17){$O_2$}
 \put(54, 17){$O_3$}
 \put(63.2, 17){$O_2$}
 \put(80, 17){$O_1$}
 \put(95.5, 17){$O_3$}
\put(43.8, 13.5){$S$}
\put(51.7, 13.5){$S'$}
 \end{overpic}}}\]
\caption{A graph move (left) corresponds to a gluing of tetrahedra shown on the right.
}
\label{Fig:LocalPicFromGluing}
\end{figure}

\begin{definition}
Given a gluing graph $G$ as in \refdef{gluingGraph}, define a \emph{graph move} on an edge of $G$ as follows. 
Replace an edge of the graph with three edges and a vertex as illustrated in \reffig{LocalPicFromGluing}~(left). There are two horizontal edges created with vertices labelled $v_2$ and $v_3$.
 Call these the free edges. Label one of the free edges with the identity, and the other labelled the same as the gluing information on the deleted edge. (The two choices of labeling the right versus the left free edge by the identity give isometric links. For concreteness, we will choose to label the free edge on the left by the identity.)
The loop is labelled with either the identity or reflection across the edge joining the two corresponding triangular faces.
We call the new piece added by the graph move the \emph{loop piece.}
\end{definition}

\begin{lemma}\label{Lem:GraphMove}
Suppose the nerve of $L$ differs from that of $L'$ by a central subdivision, where both $L$ and $L'$ are octahedral fully augmented links. Suppose $S^3\setminus L'$ is homeomorphic to the complement of a fundamental shadow link obtained from gluing graph $G'$. Then $S^3\setminus L$ is homeomorphic to the complement of a fundamental shadow link obtained from gluing graph $G$, where $G$ differs from that of $G'$ by a graph move along an edge of $G'$ that is a crossing circle edge. 
\end{lemma}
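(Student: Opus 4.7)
The plan is to translate the central subdivision of the nerve into a local modification of the polyhedral decomposition of $P'$ and read off the effect on the gluing graph. By \refprop{octaLinksCharac}, central subdivision of a triangle $T \subset N'$ corresponds geometrically to adding one new regular ideal octahedron $O_1$ to $P'$, realised in the circle packing by inserting a new circle tangent to the three circles bordering the interstitial region for $T$. This new $O_1$ is glued to the adjacent octahedron $O_2$ of $P'$ across the shaded face $S$ of $P'$ corresponding to $T$, which is now an interior face of the enlarged polyhedron $P$. Because $L'$ is a fully augmented link, $S$ is also paired via folding across a crossing circle vertex with another shaded face $S'$ of $P'$ (belonging to some octahedron $O_3$, possibly equal to $O_2$) to form the crossing disc of $L'$ corresponding to the dimer edge $AB$ of $T$; by hypothesis, this is exactly the crossing circle edge $e$ of $G'$, with endpoints $v_2$ and $v_3$ associated to $O_2$ and $O_3$.

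Next I would analyse how the dimer on $N'$ extends to $N$. A short case check shows that the extension is forced: one keeps the dimer edge $AB$ of $T$ and adds the single new dimer edge from the new central vertex of $T$ to the vertex opposite $AB$, with the other two new edges non-dimer. This new dimer edge corresponds to a new crossing circle of $L$, whose two shaded faces are the two new small triangles of $N$ containing it, both faces of $O_1$; hence the new crossing disc is represented by a loop at the new vertex $v_1$ of $G$, labelled identity or reflection according to whether this new crossing circle has a half-twist. Meanwhile $AB$ remains a dimer edge of $N$, so the crossing disc persists in $L$, with one of its two shaded faces replaced by the third new small triangle of $N$ (the shaded face of $O_1$ containing $AB$) while the other, $S'$ on $O_3$, is unchanged; this yields an edge $v_1 v_3$ of $G$ labelled the same as $e$, since the half-twist structure at this crossing circle is preserved. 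Finally, the old face $S$ is now interior to $P$, gluing $O_1$ to $O_2$ via the identity, which gives an edge $v_1 v_2$ in $G$ labelled by the identity.

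Collecting these three new edges together with the new vertex $v_1$, and deleting $e$, produces exactly the graph move defined immediately before the lemma, and by construction the fundamental shadow link associated to $G$ has complement isometric to $S^3 \setminus L$. The main obstacle is the bookkeeping in the middle paragraph: one must carefully distinguish the four shaded faces of $O_1$ according to whether they become interior faces of $P$ (gluing $O_1$ to $O_2$ by the identity), boundary shaded faces belonging to the persistent crossing disc $AB$ (inheriting the label of $e$), or boundary shaded faces forming the new crossing disc (labelled by the half-twist choice). One should also check that when $e$ is a loop at a single vertex, so $v_2 = v_3$ and $O_2 = O_3$, the same local analysis goes through without additional complications.
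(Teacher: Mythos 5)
Your proposal is correct and follows essentially the same approach as the paper's proof: locally interpreting the central subdivision as inserting one new octahedron $O_1$ into $P'$, tracking how its four shaded faces are glued (one internal face to $O_2$ giving the identity-labelled edge $v_1v_2$, one to $S'$ giving the edge $v_1v_3$ inheriting the label of $e$, and two folding together into a new crossing disc giving the loop at $v_1$), and reading off the graph move. The only stylistic difference is that you establish the forced pairing of the two shaded faces of $O_1$ via a dimer-extension case check on the nerve, whereas the paper makes the equivalent observation directly in the circle packing — that the new interior tangency vertex (the ``red'' vertex) is forced to be a crossing-circle vertex — but these are dual phrasings of the same fact, and your explicit mention of the $v_2=v_3$ degenerate case is a welcome detail that the paper only touches on implicitly.
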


\begin{proof}
Let $N$ denote the nerve of $L$, and let $N'$ denote the nerve of $L'$, so $N$ is obtained by subdividing $N'$ once. Let $P'$ denote one of the ideal polyhedra in the decomposition of $S^3\setminus L'$. 
Now, subdividing a triangle in the nerve corresponds to adding a circle inside a shaded face of the circle packing~\cite{PurcellFullyAugLinks}, as in Figure~\ref{Fig:adding octa}. Let $O_1$ be the regular ideal octahedron which is attached to $P'$ in the subdivision, so that $O_1$ is attached to some octahedron $O_2$ of $P'$ along a shaded face $S$ of $O_2$. 
The shaded face $S$ is shown with dotted lines before adding the octahedron on the right of Figure~\ref{Fig:adding octa}.

Consider the circle packing. A shaded face corresponds to a crossing disc. So the ideal vertex coloured blue on the right of \reffig{adding octa} comes from a crossing circle. This forces the ideal vertex coloured red to also correspond to a crossing circle. See \reffig{FALlocalPic}. Since shaded faces glue to shaded faces on the same polyhedron across an ideal vertex coming from a crossing circle, this forces the two shaded faces of $O_1$ adjacent to the red vertex to be glued together. 

\begin{figure}
\centering
\includegraphics{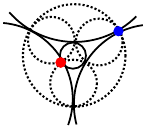}
\hspace{1in}
\includegraphics{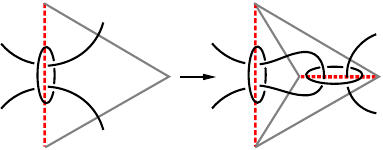}
\caption{Left: Zoomed in diagram of rightmost diagram of Figure~\ref{Fig:adding octa}. The blue and red dots are ideal vertices corresponding to crossing circles. A dotted line indicates a shaded face.
Right: Subdividing the nerve corresponds to altering the link diagram locally as shown.}
\label{Fig:FALlocalPic}
\end{figure}

Now, one of the shaded faces of $O_1$ must be glued to $S$. But in the gluing of $P'$, $S$ is glued to some other shaded face $S'$ of some octahedron $O_3$,  which may or may not be distinct to $O_2$. It follows that the final shaded face of $O_1$ must be glued to $S'$ to complete the gluing. (Note $S$ and $S'$ are distinct since in the gluing of $P'$, each shaded face glues to a distinct shaded face.) Thus, in the gluing of $P$, $O_1$ has one shaded face which glues to $S$, one shaded face which glues to $S'$, and a pair of shaded faces which glue to each other.

By assumption, $L'$ can be viewed as a fundamental shadow link, so there is a corresponding gluing graph $G'$. Let $v_2,v_3$ be the vertices corresponding to $O_2,O_3$ respectively in $G'$, and we may have $v_2=v_3$. Since $S$ glues to $S'$ in the gluing of $P'$, there is an edge $e$ connecting $v_2$ and $v_3$. Let $G$ be the gluing graph encoding the gluing of the octahedra in $P$, then $G$ is obtained from $G'$ by performing a graph move on $e$. To see this, first note that since the gluing of $P$ matches the gluing of $P'$ except for gluings involving $O_1$, $G$ is identical to $G'$ (including gluing information) except for the following. There is a vertex $v_1$ in $G$ corresponding to $O_1$. The edge $e$ between $v_2$ and $v_3$ is not present in $G$; since $O_1$ has shaded faces gluing to $S$ and $S'$, there is an edge from $v_1$ to $v_2$ and an edge from $v_1$ to $v_3$. The remaining two shaded faces of $O_1$ glue to each other, so there is a loop on $v_1$. See Figure~\ref{Fig:LocalPicFromGluing}.

For the gluing information, consider the shaded faces in the octahedral decomposition of $P$. There are two possibilities for the gluing of any two shaded faces: if a pair of shaded face come from a crossing circle, then they are glued by a reflection preserving the crossing circle vertex if the corresponding crossing circle is adjacent to a half-twist, and by the identity isometry otherwise. All other pairs of shaded faces come from splitting $P$ into octahedra, hence are glued by the identity. Label the edges incident to $v_1$ according to the gluing of the corresponding shaded faces: the edge from $v_1$ to $v_2$ is assigned the identity, the edge from $v_1$ to $v_3$  is assigned the edge label of $e$ in $G'$, the loop on $v_1$ is assigned a reflection preserving the vertex on the ideal octahedron corresponding to the crossing circle if there is a half-twist adjacent to the corresponding crossing circle and the identity isometry otherwise. The resulting gluing graph is exactly the gluing graph obtained from $G'$ by performing the graph move on $e$. Note that $e$ is a crossing circle edge; $S,S'$ are shaded faces of $P'$ so they come from a crossing circle disc. 

Let $\Tilde{L}$ be the fundamental shadow link obtained from the gluing graph $G$. By construction, $\Tilde{L}$ and $L$ have homeomorphic complements. Since the gluing is by isometry, the complements are in fact isometric.
\end{proof}

\begin{proof}[Proof of \refthm{octareFSL}]
The proof is by induction on the number of times we perform central subdivision to obtain the nerve of $L$.

If zero times, then $L$ is in the Borromean family, and \reflem{BorrRingsFALtoFSL} implies that $S^3\setminus L$ is a fundamental shadow link with associated graph one of the three Borromean basic graphs. Note both edges on a Borromean basic graph are crossing circle edges. 

So suppose $L$ is obtained by performing central subdivision $k$ times. Let $N'$ be the nerve obtained by performing central subdivision $k-1$ times, with dimer inherited from that of $L$. By induction, $S^3\setminus L'$ is a fundamental shadow link. By \reflem{GraphMove}, $S^3\setminus L$ is also a fundamental shadow link, with associated graph $G$ obtained from that of $L'$ by a graph move along a crossing circle edge. 

Finally, we relate the number of crossing circles to the number of connected sums. If $L$ has $c$ crossing circles, then by \refprop{octaLinksCharac}, $P$ is obtained by gluing $c-1$ regular ideal octahedra, hence there are $c-1$ building blocks. Gluing these gives a genus $c$ handlebody, and doubling yields
$\#^c (S^1\times S^2$).
\end{proof}

\begin{corollary} \label{Cor:octFALandGraphMove}
Let $L$ be a fully augmented link. Then $L$ is octahedral if and only if the gluing graph of $L$ is obtained by graph moves on one of the three Borromean basic graphs. \qed
\end{corollary}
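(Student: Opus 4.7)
The plan is to prove both directions by induction, with \reflem{GraphMove} and \refprop{octaLinksCharac} providing the dictionary between central subdivisions on the fully-augmented-link side and graph moves on the gluing-graph side.

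For the forward direction, I would assume $L$ is octahedral; by \refprop{octaLinksCharac} its nerve is obtained from the complete graph $K_4$ by some number $k$ of central subdivisions, and I would induct on $k$. The base case $k=0$ is precisely \reflem{BorrRingsFALtoFSL}, which identifies the gluing graph of $L$ with one of the three Borromean basic graphs. For the inductive step, I would write the nerve of $L$ as a central subdivision of the nerve of an octahedral fully augmented link $L'$ obtained from $K_4$ by $k-1$ subdivisions, and then apply \reflem{GraphMove} directly to conclude that the gluing graph of $L$ arises from that of $L'$ by one additional graph move along a crossing circle edge.

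For the converse, I would induct on the number of graph moves used to produce the given gluing graph from a Borromean basic graph. The base case is again \reflem{BorrRingsFALtoFSL}. For the inductive step, given $G$ obtained from $G'$ by a graph move along a crossing circle edge $e$, I would take an octahedral fully augmented link $L'$ whose gluing graph is $G'$ (by the inductive hypothesis) and centrally subdivide the triangle of the nerve of $L'$ dual to $e$. By \reflem{graphtoFAL} this yields a new fully augmented link $L$, and comparing \reffig{LocalPicFromGluing} with \reffig{adding octa} and \reffig{FALlocalPic} shows that the shaded-face gluings added by this subdivision match exactly the gluing information on the loop piece of $G$. Hence $L$ is octahedral by \refprop{octaLinksCharac}, and the fundamental shadow link associated to $G$ is isometric to $S^3\setminus L$.

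The main obstacle is essentially the bookkeeping in this last step: running the geometric argument of \reflem{GraphMove} in reverse, so that an abstract graph move on a crossing circle edge lifts to a concrete central subdivision of the circle packing nerve, with matching half-twist data on the loop (identity versus reflection through the new crossing-circle vertex). Once this local compatibility is verified --- noting in particular that the two free edges produced by a graph move necessarily inherit identity/original labels because the shaded faces created by splitting $P$ into octahedra glue by the identity --- the corollary follows by repeated application of the inductive step.
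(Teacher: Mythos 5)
Your proposal is correct and follows essentially the route the authors intend: the forward direction is exactly the induction already carried out in the proof of \refthm{octareFSL} (base case \reflem{BorrRingsFALtoFSL}, inductive step \reflem{GraphMove}), and the converse is obtained by running \reflem{GraphMove} in reverse and then invoking \refprop{octaLinksCharac}. The only points worth tightening are bookkeeping items you already flag: you should note explicitly that the graph moves in the corollary are, as in the paper's usage, always performed on crossing circle edges (the loop of each new loop piece is automatically a crossing circle edge, while the two free edges are not, so the inductive hypothesis applies to $G'$); and the phrase ``the triangle of the nerve dual to $e$'' should be sharpened, since a crossing circle edge corresponds to a pair of shaded faces of $P'$ and hence to a pair of nerve triangles glued across the crossing circle vertex --- subdividing either member of the pair produces the desired central subdivision, and the dimer extends to the three new triangles as in \reffig{adding octa}.
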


\subsection{Comparison to change-of-pair operation}
In \cite{WongYang}, Wong and Yang describe a surgery move which allows one to transform an octahedral fully augmented link $L$ into a fundamental shadow link $\Tilde{L}$ with homeomorphic complements. Their move is shown in \reffig{WongYangMove}; each crossing circle of $L$ is given the 0-framing and then encircled with a framed trivial unknot. Note that this move is an example of the change-of-pair operation which Wong and Yang define, a topological operation which changes the pair $(M,L)$ where $M$ is a closed oriented 3-manifold and $L\subset M$ is a framed link, but does not change the complement $M\setminus L$. In the case of transitioning from octahedral fully augmented links to fundamental shadow links, the move in \reffig{WongYangMove} can be seen from our proof, specifically, from the loop piece.

\begin{figure}
    \centering
\begingroup%
  \makeatletter%
  \providecommand\color[2][]{%
    \errmessage{(Inkscape) Color is used for the text in Inkscape, but the package 'color.sty' is not loaded}%
    \renewcommand\color[2][]{}%
  }%
  \providecommand\transparent[1]{%
    \errmessage{(Inkscape) Transparency is used (non-zero) for the text in Inkscape, but the package 'transparent.sty' is not loaded}%
    \renewcommand\transparent[1]{}%
  }%
  \providecommand\rotatebox[2]{#2}%
  \newcommand*\fsize{\dimexpr\f@size pt\relax}%
  \newcommand*\lineheight[1]{\fontsize{\fsize}{#1\fsize}\selectfont}%
  \ifx\svgwidth\undefined%
    \setlength{\unitlength}{122.40000343bp}%
    \ifx\svgscale\undefined%
      \relax%
    \else%
      \setlength{\unitlength}{\unitlength * \real{\svgscale}}%
    \fi%
  \else%
    \setlength{\unitlength}{\svgwidth}%
  \fi%
  \global\let\svgwidth\undefined%
  \global\let\svgscale\undefined%
  \makeatother%
  \begin{picture}(1,0.58823528)%
    \lineheight{1}%
    \setlength\tabcolsep{0pt}%
    \put(0,0){\includegraphics[width=\unitlength,page=1]{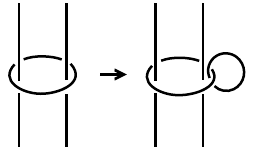}}%
    \put(0.66784129,0.38340213){\color[rgb]{0,0,0}\makebox(0,0)[lt]{\lineheight{1.25}\smash{\begin{tabular}[t]{l}$0$\end{tabular}}}}%
  \end{picture}%
\endgroup%

    \caption{The move described by Wong and Yang.}
    \label{Fig:WongYangMove}
\end{figure}

The following is proved in~\cite{WongYang}. Our methods give a new proof. 

\begin{proposition}
Let $L$ be an octahedral fully augmented link and $\Tilde{L}$ its corresponding fundamental shadow link from \refthm{octareFSL}. Then the diagram of $\Tilde{L}$ is obtained from the diagram of $L$ by applying the change-of-pair move in \reffig{WongYangMove} to each crossing circle.
\end{proposition}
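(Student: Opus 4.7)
The plan is to prove the proposition by induction on the number $k$ of central subdivisions needed to build the nerve of $L$ from the complete graph on four vertices, mirroring the inductive structure used in the proof of \refthm{octareFSL}. The key idea is that one graph move in our inductive construction of the gluing graph should correspond diagrammatically to one application of the change-of-pair move at a crossing circle.

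For the base case $k=0$, the link $L$ is in the Borromean family, so it has three crossing circles. I would verify the claim directly by case analysis on the three Borromean basic graphs (the Borromean rings and the two Borromean twisted sisters). In each case, applying Wong and Yang's change-of-pair move to each of the three crossing circles produces a framed link diagram in $\#^2(S^1\times S^2)$ (interpreting the 0-framed components as the attaching data for $S^1\times S^2$ summands). I would check that this diagram is precisely the fundamental shadow link built from the Borromean basic graph as in \reflem{BorrRingsFALtoFSL}, matching each loop of the basic graph with a pair of parallel strands running through one of the encircling unknots introduced by the move.

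For the inductive step, assume the proposition holds for an octahedral fully augmented link $L'$ whose nerve is obtained by $k-1$ central subdivisions, so that $\Tilde{L'}$ is obtained from $L'$ by applying the change-of-pair move at each crossing circle of $L'$. Let $L$ be obtained from $L'$ by one further central subdivision; diagrammatically, this adds a single new crossing circle to the diagram of $L$, locally as in \reffig{FALlocalPic} (right). By \reflem{GraphMove}, the gluing graph $G$ of $\Tilde{L}$ is obtained from that of $\Tilde{L'}$ by a single graph move along the crossing circle edge $e$ corresponding to the crossing disc being subdivided. The graph move is purely local: it deletes $e$, inserts the new vertex $v_1$ together with the loop piece, and leaves the rest of $G'$ and all its gluing information untouched. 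I would then argue that the local diagrammatic effect of this loop piece at $v_1$ is precisely the insertion of an encircling framed unknot around the newly added crossing circle, matching the change-of-pair move, while at all other crossing circles of $L$ (which coincide with crossing circles of $L'$) the diagram of $\Tilde{L}$ agrees with $\Tilde{L'}$ and the inductive hypothesis applies.

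The main obstacle is the last step above: translating the loop piece of the gluing graph into an actual diagram in $\#^c(S^1\times S^2)$ and recognizing it as the local picture of the change-of-pair move. To do this I would unpack the standard dictionary between gluing graphs and link diagrams (edges become pairs of parallel strands joined through the associated pair of shaded triangles, with gluing information specifying half-twists or their absence), and verify that a loop at $v_1$ labelled by the identity or by the reflection across the relevant edge of the truncated tetrahedron produces exactly the encircling 0-framed unknot of \reffig{WongYangMove}, with the half-twist datum on the new crossing circle of $L$ faithfully recorded by the label on the loop. Once this local diagrammatic correspondence is established, the inductive step is immediate, and combining it with the base case completes the proof.
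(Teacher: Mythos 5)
Your proposal takes essentially the same route as the paper: both proceed by induction with the Borromean family as the base case, and both recognize in the inductive step that the graph move's loop piece realizes the change-of-pair move locally at the newly created crossing circle while leaving the rest of the diagram untouched (your induction variable, the number of central subdivisions, is matched one-to-one with the paper's number of graph moves via \refcor{octFALandGraphMove}). One small slip: links in the Borromean family have two crossing circles, not three --- the nerve $K_4$ carries exactly two dimer edges, each giving a crossing circle --- which is precisely why the single building block, doubled, yields $\#^2(S^1\times S^2)$ consistent with $c=2$ in \refthm{octareFSL}.
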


\begin{proof}
Since $L$ is octahedral, by \refcor{octFALandGraphMove}, we prove the statement on the number of graph moves needed to obtain the gluing graph $G$ of $L$ from one of the Borromean basic graphs. Suppose zero moves are needed. Then $L$ is one of the Borromean rings family. Take a single truncated tetrahedron and glue the faces coming from the truncation according to $G$. Since the edges of the tetrahedron form $\Tilde{L}$, one can use the gluing to determine a diagram of $\Tilde{L}$; see \reffig{BorrSisterEg}. There, the 0-framed unknots originally bound compressing discs in the handlebody, which are then doubled. 
Observe that after isotoping, the resulting link diagram has exactly the form claimed. Figure~\ref{Fig:BorrSisterEg} shows an example for one of the Borromean twisted sisters; the other links in the Borromean family can be analogously verified. 

\begin{figure}
\centering
\includegraphics{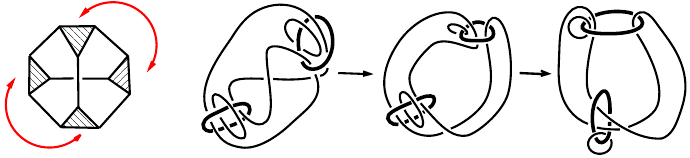}
\caption{A fundamental shadow link determined from one of the Borromean sisters. Black arrows indicate isotopy. The 0-framed unknots are indicated by a thicker line to avoid overcrowding the diagram.}
\label{Fig:BorrSisterEg}
\end{figure}

Next suppose that $G$ is obtained by $n$ graph moves on one of the Borromean basic graphs. Let $L'$ be the fully augmented link after $n-1$ graph moves with gluing graph $G'$ and corresponding fundamental shadow link $\Tilde{L'}$. Consider the local diagram obtained from the gluing of tetrahedra according to the graph move; see \reffig{LocalPicFromGluing} and \reffig{DiagramGraphMove}.

\begin{figure}
  \centering
\begingroup%
  \makeatletter%
  \providecommand\color[2][]{%
    \errmessage{(Inkscape) Color is used for the text in Inkscape, but the package 'color.sty' is not loaded}%
    \renewcommand\color[2][]{}%
  }%
  \providecommand\transparent[1]{%
    \errmessage{(Inkscape) Transparency is used (non-zero) for the text in Inkscape, but the package 'transparent.sty' is not loaded}%
    \renewcommand\transparent[1]{}%
  }%
  \providecommand\rotatebox[2]{#2}%
  \newcommand*\fsize{\dimexpr\f@size pt\relax}%
  \newcommand*\lineheight[1]{\fontsize{\fsize}{#1\fsize}\selectfont}%
  \ifx\svgwidth\undefined%
    \setlength{\unitlength}{220.32045937bp}%
    \ifx\svgscale\undefined%
      \relax%
    \else%
      \setlength{\unitlength}{\unitlength * \real{\svgscale}}%
    \fi%
  \else%
    \setlength{\unitlength}{\svgwidth}%
  \fi%
  \global\let\svgwidth\undefined%
  \global\let\svgscale\undefined%
  \makeatother%
  \begin{picture}(1,0.320779)%
    \lineheight{1}%
    \setlength\tabcolsep{0pt}%
    \put(0,0){\includegraphics[width=\unitlength,page=1]{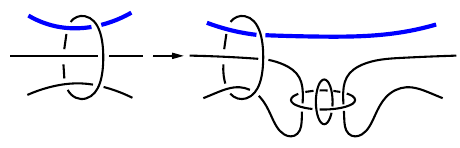}}%
    \put(0.20700934,0.28346922){\color[rgb]{0,0,0}\makebox(0,0)[lt]{\lineheight{1.25}\smash{\begin{tabular}[t]{l}$0$\end{tabular}}}}%
    \put(0.77959612,0.09234919){\color[rgb]{0,0,0}\makebox(0,0)[lt]{\lineheight{1.25}\smash{\begin{tabular}[t]{l}$0$\end{tabular}}}}%
    \put(0.57017958,0.26926991){\color[rgb]{0,0,0}\makebox(0,0)[lt]{\lineheight{1.25}\smash{\begin{tabular}[t]{l}$0$\end{tabular}}}}%
  \end{picture}%
\endgroup%

  \caption{How graph move affects diagram.}
  \label{Fig:DiagramGraphMove}
\end{figure}

By our construction, the edge $e$ in $G'$ which was deleted in the $n^{th}$ graph move was a crossing circle edge. Hence, by induction, one of the strands in the left diagram of \reffig{DiagramGraphMove} forms part of an unknotted component, without loss of generality the thicker blue strand at the top. Since each truncated tetrahedron is added so that one of the shaded faces glues to match the gluing in $G'$, the new tetrahedron will be glued so that an edge joining two shaded faces is in the position of the blue edge, preserving the unknotted component. See also \reffig{LocalPicFromGluing}. Gluing the shaded faces on the new tetrahedron according to the loop piece as in \reffig{LocalPicFromGluing} gives the right diagram in \reffig{DiagramGraphMove}. Again the 0-framed unknots indicate doubled compression discs. Observe that we obtain the same local diagram as the local diagram obtained from one central subdivision on the nerve; compare \reffig{FALlocalPic}. (In these examples, the gluing on all edges is the identity.) 
By induction $\#^{n+1}(S^1\times S^2)\setminus\Tilde{L'}$ has the presentation claimed. 
\end{proof}


\section{Coloured Jones polynomials for augmented links} \label{Sec:CJPforOFAL}

The purpose of this section is to give a new calculation of coloured Jones polynomials for octahedral fully augmented links. Our methods use recoupling theory. We note that Van der Veen has calculations that are similar in~\cite{VanDerVeen:FALvolConj}, but his focus is a different colouring, a different root of unity, and a different volume conjecture. Note also that Van der Veen allows many more augmentation rings than we do in the links that he considers.

\subsection{Recoupling Theory and Coloured Jones Polynomials} \label{Sec:recoup}

We first review background and definitions required to compute coloured Jones polynomials, using Jones--Wenzl idempotents and Kauffman brackets.

\begin{definition}[Coloured Jones polynomial]\label{Def:CJP}
Let $L$ be an oriented link in $S^3$ with $n$ ordered components. Let $D$ be a diagram of $L$ equipped with blackboard framing. Denoted by $\text{lk}_{ij}(D)$ the linking number of $D_i$ and $D_j$, and by $\text{lk}_{ii}(D)$ the writhe of $D_i$. Let $w_i(D)= \sum_{j=1}^n \text{lk}_{ij}(D)$.
The $(\pmb{i}+\pmb{1})^{th}$ coloured Jones polynomial of $L$ is defined as
\[
J_{L,\pmb{i}+\pmb{1}}(A) =
\langle ((-1)^{i_1}A^{i^2_1+i_1})^{-w_{1}(D)}S_{i_1}(z), \dots, ((-1)^{i_n}A^{i^2_n+i_n})^{-w_n(D)}S_{i_n}(z)
\rangle_D
\]
Here $\langle \cdot, \dots, \cdot\rangle_D$ is the Kaufman multi-bracket, $S_k$ denotes the $k$-th Chebyshev polynomial of the second kind and $\pmb{i}=(i_1,i_2\dots,i_n)$ is a multi-integer with $\pmb{i}+\pmb{1}=(i_1+1,i_2+1,\dots,i_n+1)$. 
In particular, for $A\in\CC$ with $|A|=1$, $|J_{L,\pmb{i}+\pmb{1}}(A)|$ is well defined for an unoriented link $L$ and
\begin{equation}\label{Eqn:colJones}
|J_{L,\pmb{i}+\pmb{1}}(A)| =
|\langle S_{i_1}(z), \dots, S_{i_n}(z) \rangle_D|
\end{equation}
\end{definition}

We will not need the full formal definitions of the terms in \refdef{CJP}. Instead, in an attempt to keep the exposition clean and as self-contained as possible, we will present here only the tools we use, and refer to the works of Lickorish~\cite{LickorishTL},~\cite[Chapter~13 and 14]{LickorishTextbook} and Kauffman and Lins~\cite[Chapter~9]{KauffLins} for further details and further applications. A succinct description of the techniques involved can be found in \cite{MasbaumVogel}.

To compute the coloured Jones polynomial of an octahedral fully augmented link, we use results in~\cite{LickorishTL, LickorishTextbook, KauffLins}, which state that \refeqn{colJones} can be computed by decorating each component of the link with a Jones--Wenzl idempotent, and evaluating the resulting diagram in the Kauffman multi-bracket.
We represent the Jones--Wenzl idempotent by $n$ strands entering a box and $n$ strands exiting it:
\[ \vcenter{\hbox{
\begingroup%
  \makeatletter%
  \providecommand\color[2][]{%
    \errmessage{(Inkscape) Color is used for the text in Inkscape, but the package 'color.sty' is not loaded}%
    \renewcommand\color[2][]{}%
  }%
  \providecommand\transparent[1]{%
    \errmessage{(Inkscape) Transparency is used (non-zero) for the text in Inkscape, but the package 'transparent.sty' is not loaded}%
    \renewcommand\transparent[1]{}%
  }%
  \providecommand\rotatebox[2]{#2}%
  \newcommand*\fsize{\dimexpr\f@size pt\relax}%
  \newcommand*\lineheight[1]{\fontsize{\fsize}{#1\fsize}\selectfont}%
  \ifx\svgwidth\undefined%
    \setlength{\unitlength}{36bp}%
    \ifx\svgscale\undefined%
      \relax%
    \else%
      \setlength{\unitlength}{\unitlength * \real{\svgscale}}%
    \fi%
  \else%
    \setlength{\unitlength}{\svgwidth}%
  \fi%
  \global\let\svgwidth\undefined%
  \global\let\svgscale\undefined%
  \makeatother%
  \begin{picture}(1,0.34)%
    \lineheight{1}%
    \setlength\tabcolsep{0pt}%
    \put(0,0){\includegraphics[width=\unitlength,page=1]{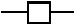}}%
    \put(0.12294433,0.21833012){\color[rgb]{0,0,0}\makebox(0,0)[lt]{\lineheight{1.25}\smash{\begin{tabular}[t]{l}$n$\end{tabular}}}}%
  \end{picture}%
\endgroup%
}} \]

As described in~\cite{MasbaumVogel}, diagrams decorated by Jones--Wenzl idempotents can be evaluated using recoupling theory~\cite{KauffLins, LickorishTextbook}.
The central object in recoupling theory is the following 3-valent vertex.

\begin{figure}[ht]
\centering
(1) \import{figures/}{ThreeVertex.pdf_tex}
\hspace{.2in}
(2) 
\begingroup%
  \makeatletter%
  \providecommand\color[2][]{%
    \errmessage{(Inkscape) Color is used for the text in Inkscape, but the package 'color.sty' is not loaded}%
    \renewcommand\color[2][]{}%
  }%
  \providecommand\transparent[1]{%
    \errmessage{(Inkscape) Transparency is used (non-zero) for the text in Inkscape, but the package 'transparent.sty' is not loaded}%
    \renewcommand\transparent[1]{}%
  }%
  \providecommand\rotatebox[2]{#2}%
  \newcommand*\fsize{\dimexpr\f@size pt\relax}%
  \newcommand*\lineheight[1]{\fontsize{\fsize}{#1\fsize}\selectfont}%
  \ifx\svgwidth\undefined%
    \setlength{\unitlength}{72bp}%
    \ifx\svgscale\undefined%
      \relax%
    \else%
      \setlength{\unitlength}{\unitlength * \real{\svgscale}}%
    \fi%
  \else%
    \setlength{\unitlength}{\svgwidth}%
  \fi%
  \global\let\svgwidth\undefined%
  \global\let\svgscale\undefined%
  \makeatother%
  \begin{picture}(1,0.89999998)%
    \lineheight{1}%
    \setlength\tabcolsep{0pt}%
    \put(0,0){\includegraphics[width=\unitlength,page=1]{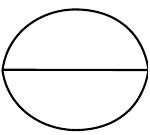}}%
    \put(0.47298665,0.73780423){\color[rgb]{0,0,0}\makebox(0,0)[lt]{\lineheight{1.25}\smash{\begin{tabular}[t]{l}$a$\end{tabular}}}}%
    \put(0.47298665,0.45895415){\color[rgb]{0,0,0}\makebox(0,0)[lt]{\lineheight{1.25}\smash{\begin{tabular}[t]{l}$b$\end{tabular}}}}%
    \put(0.47592475,0.06818959){\color[rgb]{0,0,0}\makebox(0,0)[lt]{\lineheight{1.25}\smash{\begin{tabular}[t]{l}$c$\end{tabular}}}}%
  \end{picture}%
\endgroup%

\hspace{.2in}
(3) 
\begingroup%
  \makeatletter%
  \providecommand\color[2][]{%
    \errmessage{(Inkscape) Color is used for the text in Inkscape, but the package 'color.sty' is not loaded}%
    \renewcommand\color[2][]{}%
  }%
  \providecommand\transparent[1]{%
    \errmessage{(Inkscape) Transparency is used (non-zero) for the text in Inkscape, but the package 'transparent.sty' is not loaded}%
    \renewcommand\transparent[1]{}%
  }%
  \providecommand\rotatebox[2]{#2}%
  \newcommand*\fsize{\dimexpr\f@size pt\relax}%
  \newcommand*\lineheight[1]{\fontsize{\fsize}{#1\fsize}\selectfont}%
  \ifx\svgwidth\undefined%
    \setlength{\unitlength}{72bp}%
    \ifx\svgscale\undefined%
      \relax%
    \else%
      \setlength{\unitlength}{\unitlength * \real{\svgscale}}%
    \fi%
  \else%
    \setlength{\unitlength}{\svgwidth}%
  \fi%
  \global\let\svgwidth\undefined%
  \global\let\svgscale\undefined%
  \makeatother%
  \begin{picture}(1,1)%
    \lineheight{1}%
    \setlength\tabcolsep{0pt}%
    \put(0,0){\includegraphics[width=\unitlength,page=1]{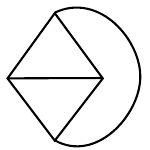}}%
    \put(0.07085698,0.65654323){\color[rgb]{0,0,0}\makebox(0,0)[lt]{\lineheight{1.25}\smash{\begin{tabular}[t]{l}$j$\end{tabular}}}}%
    \put(0.28431112,0.5014697){\color[rgb]{0,0,0}\makebox(0,0)[lt]{\lineheight{1.25}\smash{\begin{tabular}[t]{l}$k$\end{tabular}}}}%
    \put(0.58533624,0.64742129){\color[rgb]{0,0,0}\makebox(0,0)[lt]{\lineheight{1.25}\smash{\begin{tabular}[t]{l}$l$\end{tabular}}}}%
    \put(0.8152099,0.80431925){\color[rgb]{0,0,0}\makebox(0,0)[lt]{\lineheight{1.25}\smash{\begin{tabular}[t]{l}$n$\end{tabular}}}}%
    \put(0.12011563,0.18037632){\color[rgb]{0,0,0}\makebox(0,0)[lt]{\lineheight{1.25}\smash{\begin{tabular}[t]{l}$i$\end{tabular}}}}%
    \put(0.53607759,0.1822007){\color[rgb]{0,0,0}\makebox(0,0)[lt]{\lineheight{1.25}\smash{\begin{tabular}[t]{l}$m$\end{tabular}}}}%
  \end{picture}%
\endgroup%

\caption{From left to right: (1) a $3$-vertex, (2) decorated theta net $\theta(a,b,c)$, and (3) tetrahedral network $\Tet\begin{bmatrix}
        i & j & n \\
        l & m & k
    \end{bmatrix}$.}
    \label{Fig:theta-net}
\end{figure}

\begin{definition} \label{Def:3vertDef}
Assume $(a,b,c)$ is such that $a+b+c$ is even and $a\leq b+c, b\leq c+a, c\leq a+b$. Such a triple $(a,b,c)$ is called \textit{admissible}. Let $i=(a+b-c)/2$, $j=(a+c-b)/2$, $k=(b+c-a)/2$. Define an \emph{admissible $3$-vertex} to be the  Jones--Wenzl idempotents decorating the graphs given in \reffig{theta-net}~(1).
\end{definition}

Two elementary 3-valent graphs are the decorated theta net and tetrahedral network, shown in Figure~\ref{Fig:theta-net}~(2) and~(3). 

\begin{definition}\label{Def:Theta}
Define the \emph{trihedron coefficient} $\theta(a,b,c)$ to be the evaluation of the decorated theta net of \reffig{theta-net}~(2) in the Kauffman bracket. We call $(a,b,c)$ the \emph{trihedron entries} of $\theta(a,b,c)$.

Define the \emph{tetrahedron coefficient} $\Tet\begin{bmatrix}
i & j & n \\
l & m & k
\end{bmatrix}$ to be the evaluation of the decorated tetrahedral network of \reffig{theta-net}~(3) in the Kauffman bracket. 
\end{definition}

Formulas for trihedron and tetrahedron coefficients can be found in \cite[Chapter~9]{KauffLins} and \cite{MasbaumVogel}. We will not yet need explicit formulas. Instead, we will relate them to quantum $6j$-symbols.

For the rest of this paper, let $r\ge3$ be an odd integer and $q=A^2=\qroot$.
Let $I_r=\{0,1,...,r-2\}$ be the set of non-negative integers less than or equal to $r-2$.

\begin{definition}\label{Def:QuantumInt}
The \emph{quantum integer} $[n]\in\mathbb{R}$ is defined by $[n]:= (q^n-q^{-n})/(q-q^{-1})$.
The associated \textit{quantum factorial} is $[n]!:=[n][n-1]...[1]$. We define $[0]!=1$ by convention.
\end{definition}

The \emph{quantum $6j$-symbol} was introduced by Kirillov and Reshetikhin \cite{KirillovReshetikhin} from representations of the quantum group $\mathcal{U}_q(sl_2)$. It is a complex number defined for an admissible 6-tuple $(i,j,k,l,m,n)$, denoted by the symbol
$\begin{vmatrix}    i & j & k \\    l & m & n    \end{vmatrix}$. Its precise definition is postponed to \refsec{TV}. For now, we need the following.

\begin{lemma}\label{Lem:Tet}
  The quantum $6j$-symbols relate to the trihedron coefficients $\theta$ and tetrahedron coefficients $\Tet$ by the following formula.
\begin{equation}\label{Eqn:Tetrato6j}
\begin{vmatrix}
  i & j & k \\
  l & m & n
\end{vmatrix}
=\left(\sqrt{\theta(i,j,k)\theta(i,m,n)\theta(j,l,n)\theta(k,l,m)} \right)^{-1}  \Tet
\begin{bmatrix}
  i & j & n \\
  l & m & k
\end{bmatrix}
\end{equation}
By convention, when the real number $x$ is negative, we take $\sqrt{x}=\sqrt{|x|}\sqrt{-1}$.
\end{lemma}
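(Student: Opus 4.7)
The plan is to derive \refeqn{Tetrato6j} by recognizing both sides as matrix elements of the same associator on the multiplicity space for triple tensor products, but expressed in two different bases: one natural to Kauffman bracket skein theory, and one orthonormal with respect to the natural Hermitian pairing on the multiplicity spaces. First I would recall the recoupling identity from Kauffman--Lins~\cite[Chapter~9]{KauffLins}: for admissible triples, two trivalent trees with the same four external colors $(i,j,l,m)$ but differing internal color ($k$ versus $n$) are related by a change-of-basis coefficient obtained by pairing one tree against the dual of the other along the external legs and applying the bubble identity, which produces a factor of $\theta(a,b,c)$ whenever two admissible $3$-vertices meet along a common triple of edges. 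The resulting coefficient is expressible as $\Tet\begin{bmatrix} i & j & n \\ l & m & k \end{bmatrix}$ divided by appropriate theta-net evaluations.

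Second, the quantum $6j$-symbol of Kirillov--Reshetikhin, whose definition will appear in \refsec{TV}, is by construction the matrix element of this same associator in an orthonormal basis of the one-dimensional Hom-spaces attached to each admissible triple. Since the inner product on these spaces takes the value $\theta(a,b,c)$ on the natural generator coming from the skein-theoretic $3$-vertex, passing from the skein basis to an orthonormal basis introduces a normalization factor of $1/\sqrt{\theta(a,b,c)}$ at each of the four $3$-vertices of the tetrahedral network. Reading off the four vertex-triples of the tetrahedron in \reffig{theta-net}(3) shows they are exactly $(i,j,k)$, $(i,m,n)$, $(j,l,n)$, and $(k,l,m)$, which produces the product under the square root in the denominator of \refeqn{Tetrato6j}.

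Third, I would handle the sign convention separately: at the root of unity $q=\qroot$, the individual evaluations of $\theta$ can be negative real numbers, so the formula must be interpreted using $\sqrt{x}=\sqrt{|x|}\sqrt{-1}$ for $x<0$, which is consistent with analytic continuation from generic $q$ where each $\theta$ value is positive. The main obstacle I anticipate is not mathematical content but convention bookkeeping: the ordering of entries in $\begin{vmatrix} i & j & k \\ l & m & n \end{vmatrix}$ and in $\Tet\begin{bmatrix} i & j & n \\ l & m & k \end{bmatrix}$ is not symmetric under naive exchange of rows or columns, and various references use mutually incompatible conventions. Matching the four tetrahedral vertices with the four trihedron factors in the correct assignment requires a careful audit against the conventions in \cite[Chapter~9]{KauffLins} and \cite{MasbaumVogel}; once this is done, the stated identity follows immediately from the two descriptions of the associator.
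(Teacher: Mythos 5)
Your approach is genuinely different from the paper's. The paper disposes of this lemma with a one-line formula comparison: it directly matches the explicit factorial formulas for $\theta$ and $\Tet$ from \cite{KauffLins,MasbaumVogel} against the explicit $\Delta(ijk)$--$S_z$ formula for the quantum $6j$-symbol given later in \refdef{6j-defn}. You instead argue structurally, characterizing the $6j$-symbol as the matrix element of the associator in an orthonormal basis and deriving the four $\sqrt{\theta}$ normalizations from the self-pairing of the skein-theoretic $3$-vertices. Your route is more illuminating --- it explains \emph{why} the denominator is a square root of a product of four trihedron coefficients, one per vertex of the tetrahedral net, rather than presenting this as an algebraic coincidence --- while the paper's route is shorter and requires no representation-theoretic scaffolding.

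There is, however, a gap in your second step. You assert that the quantum $6j$-symbol ``is by construction the matrix element of this same associator in an orthonormal basis,'' but the paper does \emph{not} define it that way: \refdef{6j-defn} gives it as an explicit sum $(-1)^{(i+j+k+l+m+n)/2}\Delta(ijk)\Delta(imn)\Delta(ljn)\Delta(lmk)\sum_z S_z$. The identification of that closed-form expression with the orthonormal associator matrix element is a nontrivial statement --- it is essentially the content of Kirillov--Reshetikhin's original derivation, and it is more or less logically equivalent to the lemma you are trying to prove. Taking it as given makes the argument circular relative to the paper's conventions. To close the gap you would either need to cite a result equating the explicit formula with the structural characterization, or carry out the factorial bookkeeping you defer at the end --- at which point you have essentially reproduced the paper's direct comparison. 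The rest of your argument (reading off the four vertex-triples $(i,j,k)$, $(i,m,n)$, $(j,l,n)$, $(k,l,m)$ from the tetrahedral net, and handling the branch of square root at the root of unity) is correct as stated.
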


\begin{proof}
Formulas for the evaluation of the trihedral and tetrahedral networks can be found in \cite{KauffLins, MasbaumVogel}, and compared with the definition of the quantum $6j$-symbol.
\end{proof}

\subsection{Coloured Jones and octahedral links} \label{Sec:colJones}

We next provide a new formula for the coloured Jones polynomial of any octahedral fully augmented link. The following lemmas are key to our computations.

\begin{lemma} [Merging strands, Figure~14.15 of \cite{LickorishTextbook}] \label{Lem:squishingLemma}
\[\vcenter{\hbox{
\begingroup%
  \makeatletter%
  \providecommand\color[2][]{%
    \errmessage{(Inkscape) Color is used for the text in Inkscape, but the package 'color.sty' is not loaded}%
    \renewcommand\color[2][]{}%
  }%
  \providecommand\transparent[1]{%
    \errmessage{(Inkscape) Transparency is used (non-zero) for the text in Inkscape, but the package 'transparent.sty' is not loaded}%
    \renewcommand\transparent[1]{}%
  }%
  \providecommand\rotatebox[2]{#2}%
  \newcommand*\fsize{\dimexpr\f@size pt\relax}%
  \newcommand*\lineheight[1]{\fontsize{\fsize}{#1\fsize}\selectfont}%
  \ifx\svgwidth\undefined%
    \setlength{\unitlength}{72bp}%
    \ifx\svgscale\undefined%
      \relax%
    \else%
      \setlength{\unitlength}{\unitlength * \real{\svgscale}}%
    \fi%
  \else%
    \setlength{\unitlength}{\svgwidth}%
  \fi%
  \global\let\svgwidth\undefined%
  \global\let\svgscale\undefined%
  \makeatother%
  \begin{picture}(1,0.5)%
    \lineheight{1}%
    \setlength\tabcolsep{0pt}%
    \put(0,0){\includegraphics[width=\unitlength,page=1]{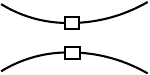}}%
    \put(0.24087552,0.39149753){\color[rgb]{0,0,0}\makebox(0,0)[lt]{\lineheight{1.25}\smash{\begin{tabular}[t]{l}$a$\end{tabular}}}}%
    \put(0.2560659,0.02692931){\color[rgb]{0,0,0}\makebox(0,0)[lt]{\lineheight{1.25}\smash{\begin{tabular}[t]{l}$b$\end{tabular}}}}%
  \end{picture}%
\endgroup%

  }}=\sum_{c}\frac{\Delta_c}{\theta(a,b,c)}
\vcenter{\hbox{
\begingroup%
  \makeatletter%
  \providecommand\color[2][]{%
    \errmessage{(Inkscape) Color is used for the text in Inkscape, but the package 'color.sty' is not loaded}%
    \renewcommand\color[2][]{}%
  }%
  \providecommand\transparent[1]{%
    \errmessage{(Inkscape) Transparency is used (non-zero) for the text in Inkscape, but the package 'transparent.sty' is not loaded}%
    \renewcommand\transparent[1]{}%
  }%
  \providecommand\rotatebox[2]{#2}%
  \newcommand*\fsize{\dimexpr\f@size pt\relax}%
  \newcommand*\lineheight[1]{\fontsize{\fsize}{#1\fsize}\selectfont}%
  \ifx\svgwidth\undefined%
    \setlength{\unitlength}{64.79999828bp}%
    \ifx\svgscale\undefined%
      \relax%
    \else%
      \setlength{\unitlength}{\unitlength * \real{\svgscale}}%
    \fi%
  \else%
    \setlength{\unitlength}{\svgwidth}%
  \fi%
  \global\let\svgwidth\undefined%
  \global\let\svgscale\undefined%
  \makeatother%
  \begin{picture}(1,0.55555557)%
    \lineheight{1}%
    \setlength\tabcolsep{0pt}%
    \put(0,0){\includegraphics[width=\unitlength,page=1]{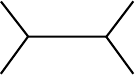}}%
    \put(0.1112542,0.44165251){\color[rgb]{0,0,0}\makebox(0,0)[lt]{\lineheight{1.25}\smash{\begin{tabular}[t]{l}$a$\end{tabular}}}}%
    \put(0.80492655,0.43717767){\color[rgb]{0,0,0}\makebox(0,0)[lt]{\lineheight{1.25}\smash{\begin{tabular}[t]{l}$a$\end{tabular}}}}%
    \put(0.1112542,0.03636945){\color[rgb]{0,0,0}\makebox(0,0)[lt]{\lineheight{1.25}\smash{\begin{tabular}[t]{l}$b$\end{tabular}}}}%
    \put(0.80854033,0.03984139){\color[rgb]{0,0,0}\makebox(0,0)[lt]{\lineheight{1.25}\smash{\begin{tabular}[t]{l}$b$\end{tabular}}}}%
    \put(0.4609101,0.3198028){\color[rgb]{0,0,0}\makebox(0,0)[lt]{\lineheight{1.25}\smash{\begin{tabular}[t]{l}$c$\end{tabular}}}}%
  \end{picture}%
\endgroup%

}}\]
where the sum is over all $c$ such that the triple $(a,b,c)$ is admissible, and the term $\Delta_n$ denotes the evaluation in the Kauffman bracket of the closure of the Jones--Wenzl idempotent in the plane. 
\end{lemma}

More information on $\Delta_n$ will be required in the next section, but we postpone it to \reflem{jbounded}.

\begin{lemma} [Removing crossing circle, Lemma~14.2 of \cite{LickorishTextbook}]\label{Lem:remove-circle}
\[
\vcenter{\hbox{
\begingroup%
  \makeatletter%
  \providecommand\color[2][]{%
    \errmessage{(Inkscape) Color is used for the text in Inkscape, but the package 'color.sty' is not loaded}%
    \renewcommand\color[2][]{}%
  }%
  \providecommand\transparent[1]{%
    \errmessage{(Inkscape) Transparency is used (non-zero) for the text in Inkscape, but the package 'transparent.sty' is not loaded}%
    \renewcommand\transparent[1]{}%
  }%
  \providecommand\rotatebox[2]{#2}%
  \newcommand*\fsize{\dimexpr\f@size pt\relax}%
  \newcommand*\lineheight[1]{\fontsize{\fsize}{#1\fsize}\selectfont}%
  \ifx\svgwidth\undefined%
    \setlength{\unitlength}{86.40000343bp}%
    \ifx\svgscale\undefined%
      \relax%
    \else%
      \setlength{\unitlength}{\unitlength * \real{\svgscale}}%
    \fi%
  \else%
    \setlength{\unitlength}{\svgwidth}%
  \fi%
  \global\let\svgwidth\undefined%
  \global\let\svgscale\undefined%
  \makeatother%
  \begin{picture}(1,0.42499998)%
    \lineheight{1}%
    \setlength\tabcolsep{0pt}%
    \put(0,0){\includegraphics[width=\unitlength,page=1]{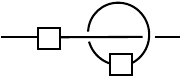}}%
    \put(0.03258591,0.23662041){\color[rgb]{0,0,0}\makebox(0,0)[lt]{\lineheight{1.25}\smash{\begin{tabular}[t]{l}$n$\end{tabular}}}}%
    \put(0.78621268,0.04621646){\color[rgb]{0,0,0}\makebox(0,0)[lt]{\lineheight{1.25}\smash{\begin{tabular}[t]{l}$a$\end{tabular}}}}%
  \end{picture}%
\endgroup%

}}=(-1)^a\left(\frac{A^{2(n+1)(a+1)}-A^{-2(n+1)(a+1)}}{A^{2(n+1)}-A^{-2(n+1)}}\right)\, \vcenter{\hbox{}}
=:\lambda_{n,a}\,\vcenter{\hbox{}}
\]
In particular, when $A$ is a $2r$-th root of unity,
\begin{equation}\label{Eqn:LambdaFormula}
  \lambda_{n,a} = \frac{(-1)^a \sin (2 \pi (n+1)(a+1)/r )}{\sin (2 \pi (n+1)/r)}.
\end{equation}
\end{lemma}

\begin{lemma}[Removing a half-twist, Theorem 3 of \cite{MasbaumVogel}] \label{Lem:removeHalfTwist}
For $(a,b,c)$ admissible, let $i=(b+c-a)/2, j=(a+c-b)/2, k=(a+b-c)/2$. 
Let $\gamma_c^{ab}=(-1)^{k}A^{k(i+j+k+2)+ij}$. Then:
    \[
\vcenter{\hbox{
\begingroup%
  \makeatletter%
  \providecommand\color[2][]{%
    \errmessage{(Inkscape) Color is used for the text in Inkscape, but the package 'color.sty' is not loaded}%
    \renewcommand\color[2][]{}%
  }%
  \providecommand\transparent[1]{%
    \errmessage{(Inkscape) Transparency is used (non-zero) for the text in Inkscape, but the package 'transparent.sty' is not loaded}%
    \renewcommand\transparent[1]{}%
  }%
  \providecommand\rotatebox[2]{#2}%
  \newcommand*\fsize{\dimexpr\f@size pt\relax}%
  \newcommand*\lineheight[1]{\fontsize{\fsize}{#1\fsize}\selectfont}%
  \ifx\svgwidth\undefined%
    \setlength{\unitlength}{43.20000172bp}%
    \ifx\svgscale\undefined%
      \relax%
    \else%
      \setlength{\unitlength}{\unitlength * \real{\svgscale}}%
    \fi%
  \else%
    \setlength{\unitlength}{\svgwidth}%
  \fi%
  \global\let\svgwidth\undefined%
  \global\let\svgscale\undefined%
  \makeatother%
  \begin{picture}(1,1.6666666)%
    \lineheight{1}%
    \setlength\tabcolsep{0pt}%
    \put(0,0){\includegraphics[width=\unitlength,page=1]{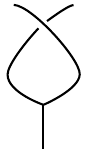}}%
    \put(0.15928202,0.73702388){\color[rgb]{0,0,0}\makebox(0,0)[lt]{\lineheight{1.25}\smash{\begin{tabular}[t]{l}$b$\end{tabular}}}}%
    \put(0.70176289,0.7335684){\color[rgb]{0,0,0}\makebox(0,0)[lt]{\lineheight{1.25}\smash{\begin{tabular}[t]{l}$a$\end{tabular}}}}%
    \put(0.5186324,0.12889231){\color[rgb]{0,0,0}\makebox(0,0)[lt]{\lineheight{1.25}\smash{\begin{tabular}[t]{l}$c$\end{tabular}}}}%
  \end{picture}%
\endgroup%

}}=\gamma_c^{ab}
\vcenter{\hbox{
\begingroup%
  \makeatletter%
  \providecommand\color[2][]{%
    \errmessage{(Inkscape) Color is used for the text in Inkscape, but the package 'color.sty' is not loaded}%
    \renewcommand\color[2][]{}%
  }%
  \providecommand\transparent[1]{%
    \errmessage{(Inkscape) Transparency is used (non-zero) for the text in Inkscape, but the package 'transparent.sty' is not loaded}%
    \renewcommand\transparent[1]{}%
  }%
  \providecommand\rotatebox[2]{#2}%
  \newcommand*\fsize{\dimexpr\f@size pt\relax}%
  \newcommand*\lineheight[1]{\fontsize{\fsize}{#1\fsize}\selectfont}%
  \ifx\svgwidth\undefined%
    \setlength{\unitlength}{43.20000172bp}%
    \ifx\svgscale\undefined%
      \relax%
    \else%
      \setlength{\unitlength}{\unitlength * \real{\svgscale}}%
    \fi%
  \else%
    \setlength{\unitlength}{\svgwidth}%
  \fi%
  \global\let\svgwidth\undefined%
  \global\let\svgscale\undefined%
  \makeatother%
  \begin{picture}(1,1.6666666)%
    \lineheight{1}%
    \setlength\tabcolsep{0pt}%
    \put(0,0){\includegraphics[width=\unitlength,page=1]{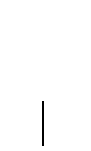}}%
    \put(0.66029934,0.8581285){\color[rgb]{0,0,0}\makebox(0,0)[lt]{\lineheight{1.25}\smash{\begin{tabular}[t]{l}$b$\end{tabular}}}}%
    \put(0.1869244,0.85812815){\color[rgb]{0,0,0}\makebox(0,0)[lt]{\lineheight{1.25}\smash{\begin{tabular}[t]{l}$a$\end{tabular}}}}%
    \put(0.5186324,0.16361453){\color[rgb]{0,0,0}\makebox(0,0)[lt]{\lineheight{1.25}\smash{\begin{tabular}[t]{l}$c$\end{tabular}}}}%
    \put(0,0){\includegraphics[width=\unitlength,page=2]{RemoveTwist2.pdf}}%
  \end{picture}%
\endgroup%

}}
\]
Moreover, reversing the sign of the crossing replaces $\gamma_c^{ab}$ by its conjugate $\bar{\gamma}_c^{ab}$, where conjugation is defined by $\bar{A}=A^{-1}$.
\end{lemma}

\begin{lemma}[Triangle pop, page~122 of \cite{KauffLins}]\label{Lem:triangleLemma}
\begin{align}
  \vcenter{\hbox{
\begingroup%
  \makeatletter%
  \providecommand\color[2][]{%
    \errmessage{(Inkscape) Color is used for the text in Inkscape, but the package 'color.sty' is not loaded}%
    \renewcommand\color[2][]{}%
  }%
  \providecommand\transparent[1]{%
    \errmessage{(Inkscape) Transparency is used (non-zero) for the text in Inkscape, but the package 'transparent.sty' is not loaded}%
    \renewcommand\transparent[1]{}%
  }%
  \providecommand\rotatebox[2]{#2}%
  \newcommand*\fsize{\dimexpr\f@size pt\relax}%
  \newcommand*\lineheight[1]{\fontsize{\fsize}{#1\fsize}\selectfont}%
  \ifx\svgwidth\undefined%
    \setlength{\unitlength}{43.20000172bp}%
    \ifx\svgscale\undefined%
      \relax%
    \else%
      \setlength{\unitlength}{\unitlength * \real{\svgscale}}%
    \fi%
  \else%
    \setlength{\unitlength}{\svgwidth}%
  \fi%
  \global\let\svgwidth\undefined%
  \global\let\svgscale\undefined%
  \makeatother%
  \begin{picture}(1,0.8333333)%
    \lineheight{1}%
    \setlength\tabcolsep{0pt}%
    \put(0,0){\includegraphics[width=\unitlength,page=1]{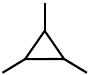}}%
    \put(0.53487422,0.66250938){\color[rgb]{0,0,0}\makebox(0,0)[lt]{\lineheight{1.25}\smash{\begin{tabular}[t]{l}$l$\end{tabular}}}}%
    \put(0.01960276,0.15675714){\color[rgb]{0,0,0}\makebox(0,0)[lt]{\lineheight{1.25}\smash{\begin{tabular}[t]{l}$p$\end{tabular}}}}%
    \put(0.85971921,0.16235753){\color[rgb]{0,0,0}\makebox(0,0)[lt]{\lineheight{1.25}\smash{\begin{tabular}[t]{l}$q$\end{tabular}}}}%
    \put(0.27051997,0.34606574){\color[rgb]{0,0,0}\makebox(0,0)[lt]{\lineheight{1.25}\smash{\begin{tabular}[t]{l}$i$\end{tabular}}}}%
    \put(0.43966098,-0.02133665){\color[rgb]{0,0,0}\makebox(0,0)[lt]{\lineheight{1.25}\smash{\begin{tabular}[t]{l}$j$\end{tabular}}}}%
    \put(0.63008746,0.34886619){\color[rgb]{0,0,0}\makebox(0,0)[lt]{\lineheight{1.25}\smash{\begin{tabular}[t]{l}$k$\end{tabular}}}}%
  \end{picture}%
\endgroup%

  }}
=& \sqrt{\frac{\theta(p,i,j)\theta(k,q,j)\theta(i,k,l)}{\theta(p,q,l)}}\begin{vmatrix}
    p & i & l \\
    k & q & j
\end{vmatrix}\vcenter{\hbox{
\begingroup%
  \makeatletter%
  \providecommand\color[2][]{%
    \errmessage{(Inkscape) Color is used for the text in Inkscape, but the package 'color.sty' is not loaded}%
    \renewcommand\color[2][]{}%
  }%
  \providecommand\transparent[1]{%
    \errmessage{(Inkscape) Transparency is used (non-zero) for the text in Inkscape, but the package 'transparent.sty' is not loaded}%
    \renewcommand\transparent[1]{}%
  }%
  \providecommand\rotatebox[2]{#2}%
  \newcommand*\fsize{\dimexpr\f@size pt\relax}%
  \newcommand*\lineheight[1]{\fontsize{\fsize}{#1\fsize}\selectfont}%
  \ifx\svgwidth\undefined%
    \setlength{\unitlength}{43.20000172bp}%
    \ifx\svgscale\undefined%
      \relax%
    \else%
      \setlength{\unitlength}{\unitlength * \real{\svgscale}}%
    \fi%
  \else%
    \setlength{\unitlength}{\svgwidth}%
  \fi%
  \global\let\svgwidth\undefined%
  \global\let\svgscale\undefined%
  \makeatother%
  \begin{picture}(1,0.8333333)%
    \lineheight{1}%
    \setlength\tabcolsep{0pt}%
    \put(0,0){\includegraphics[width=\unitlength,page=1]{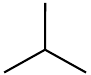}}%
    \put(0.55651591,0.62742278){\color[rgb]{0,0,0}\makebox(0,0)[lt]{\lineheight{1.25}\smash{\begin{tabular}[t]{l}$l$\end{tabular}}}}%
    \put(0.01947814,0.13490533){\color[rgb]{0,0,0}\makebox(0,0)[lt]{\lineheight{1.25}\smash{\begin{tabular}[t]{l}$p$\end{tabular}}}}%
    \put(0.82364333,0.14325411){\color[rgb]{0,0,0}\makebox(0,0)[lt]{\lineheight{1.25}\smash{\begin{tabular}[t]{l}$q$\end{tabular}}}}%
  \end{picture}%
\endgroup%
  
}}\label{Eqn:Triangleto6j}
    \end{align} 
\end{lemma}

\begin{definition}
    Given a 3-valent vertex, define the \textit{triangle move} to be the move shown in \reffig{CentralSub-TriangleMove} right. Define the \textit{triangle pop} to be the reverse of the triangle remove, shown in \refeqn{Triangleto6j}.
For the configuration on the left of \refeqn{Triangleto6j}, we refer to $l$ as the \textit{head} and $j$ as the \textit{base} of the triangle pop. These are also the edges coloured red on the right of \reffig{CentralSub-TriangleMove}. 
\end{definition}

\begin{figure}[ht]
    \centering
    \includegraphics{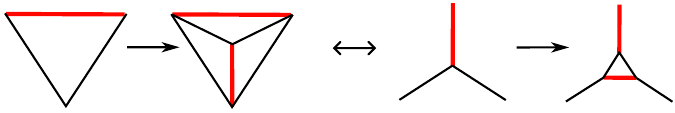}
    \caption{Triangle move is dual to central subdivision.}
    \label{Fig:CentralSub-TriangleMove}
\end{figure}

We remark that the triples involved in the trihedron coefficients in \refeqn{Triangleto6j} are precisely the triples corresponding to the four vertices of the tetrahedral net obtained from the left of \refeqn{Triangleto6j} by connecting the strands coloured $p,q,l$ into a fourth 3-vertex.

By considering the dual graph of a fully augmented link instead of its nerve, one obtains the following corollary to \refprop{octaLinksCharac}.

\begin{corollary}[Corollary to \refprop{octaLinksCharac}] \label{Cor:dualGraph}
    Let $L$ be a fully augmented link with $c$ crossing circles and complement decomposing into two polyhedra isometric to $P$. Then $P$ is obtained by gluing $c-1$ regular ideal octahedra if and only if the dual graph of $L$ is obtained by successive triangle moves on the complete graph on four vertices. In this case, there are $c-2$ triangle moves.
\end{corollary}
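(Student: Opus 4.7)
The plan is to deduce \refcor{dualGraph} directly from \refprop{octaLinksCharac} by observing that central subdivision of a triangle in the nerve is dual to a triangle move at the corresponding vertex of the dual graph, as suggested by \reffig{CentralSub-TriangleMove}, and that the complete graph on four vertices is self-dual as a triangulation of $S^2$. To this end I would first recall that the nerve $N$ is a triangulation of $S^2$ and that the dual graph of $L$ is the dual graph of $N$: one vertex per triangle of $N$, one edge per edge of $N$, and it is everywhere 3-valent. Realising $K_4$ as the 1-skeleton of the tetrahedral triangulation of $S^2$, the four faces are pairwise adjacent, so its dual is again $K_4$.

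Next I would verify the move correspondence. A central subdivision inserts one new vertex in a triangle $T$, splitting $T$ into three triangles $T_1, T_2, T_3$ that share the three new edges pairwise and inherit the three original boundary edges separately. Dually, the 3-valent vertex $v_T$ is replaced by three 3-valent vertices arranged in a triangle, each still attached to exactly one of the three original neighbours of $v_T$; this is precisely the triangle move of \reffig{CentralSub-TriangleMove}, and the construction is manifestly invertible. Hence iterated central subdivisions of $K_4$ correspond bijectively to iterated triangle moves on $K_4$, and combined with \refprop{octaLinksCharac} this yields the biconditional.

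For the count, \refprop{octaLinksCharac} gives $c-1$ octahedra. The base case is the Borromean family, with $N = K_4$, a single octahedron and $c=2$; each further central subdivision contributes exactly one additional octahedron and one additional crossing circle, so $N$ is obtained by $c-2$ central subdivisions, and the dual graph by $c-2$ triangle moves. The only point needing care, and the mild obstacle I would anticipate, is checking that every triangle move applied along the way legitimately lands at a 3-valent vertex; this is automatic because duals of triangulations are everywhere 3-valent and triangle moves preserve this property, so the induction runs without complication.
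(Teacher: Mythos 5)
Your argument matches the paper's own proof: both rely on the self-duality of $K_4$ as a triangulation of $S^2$, the observation that central subdivision of a triangle in the nerve is dual to a triangle move on the dual graph (the paper's \reffig{CentralSub-TriangleMove}), and the count from \refprop{octaLinksCharac} that $c-1$ octahedra require $c-2$ subdivisions, hence $c-2$ triangle moves. You spell out a few more details (the explicit duality of the moves and the preservation of $3$-valence), but the structure and key ideas are the same.
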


\begin{proof}
The dual graph of the complete graph on four vertices is still the complete graph on four vertices. Altering a triangle of the nerve by central subdivision alters the dual graph by a triangle move; see \reffig{CentralSub-TriangleMove}. Now, a single central subdivision corresponds to adding exactly one octahedron. Thus, $c-2$ subdivisions are required to obtain $c-1$ octahedra comprising $P$, noting that if the nerve of $L$ is the complete graph on four vertices, then $P$ is a single octahedron. This corresponds to $c-2$ triangle moves on the dual graph.
\end{proof}

\begin{proposition} \label{Prop:ColJonesforOctFAL}
   Let $L$ be an octahedral fully augmented link of $n$ components with $c$ crossing circles and diagram $D$. Decorate $D$ by $\pmb{i}=(a_1,a_2\dots,a_c,i_1,i_2,\dots,i_{n-c})$, where $a_1,\dots,a_c$ colour the crossing circles and $i_1,\dots,i_{n-c}$ colour the knot strands. 
Let $H\subseteq\{a_1,a_2,\dots,a_c\}$ be the colours of the crossing circles adjacent to half-twists; $H=\varnothing$ if $L$ has no half-twists.
Then the Kauffman multi-bracket $\langle S_{a_1}(z),\dots,S_{a_c}(z),S_{i_1}(z),\dots,S_{i_{n-c}}(z)\rangle_D$ is:
\begin{equation}
  \langle S_{a_1}(z),\dots,S_{a_c}(z),S_{i_1}(z),\dots,S_{i_{n-c}}(z) \rangle_D = \sum_{j_1,j_2,\dots,j_c}N_{\pmb{i},j_1,j_2,\dots,j_c}, \label{Eqn:colJonesFormula}
\end{equation}
where each $j_l$ is a summation variable arising from merging the strands passing between the crossing circle coloured $a_l$ according to \reflem{squishingLemma}, and $N_{\pmb{i},j_1,j_2,\dots,j_c}$ is a product of:
\begin{enumerate}
\item[(1)] $\Delta_{j_l}\lambda_{j_l,a_l}$ for $1\leq l\leq c$,
  \vspace{.1in}
\item[(2)] $\gamma_{j_l}^{i_k,i_m}$ for each $l\in\{1,2,\dots,c\}$ such that $a_l\in H$ and $i_k,i_m$ are the colours of the knot strands passing through the crossing circle coloured $a_l$,
\item[(3)] $c-1$ quantum $6j$-symbols, each of the form
  $\begin{vmatrix}
  i_q & i_p & j_k \\
  i_r & i_s & j_l
\end{vmatrix}$,
  for some $q,p,r,s\in\{1,\dots,n-c\}$, $k,l\in\{1,\dots,c\}$, and every summation variable appears at least once in such a quantum $6j$-symbol.
\end{enumerate}
\end{proposition}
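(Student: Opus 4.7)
The plan is to apply the skein-theoretic moves of \refsec{recoup} at each crossing circle in a fixed sequence, and then evaluate the resulting trivalent graph inductively using the triangle-move decomposition of \refcor{dualGraph}.

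\emph{Step 1: local resolution at each crossing circle.} At each of the $c$ crossing circles of $D$, I apply \reflem{squishingLemma} to the pair of knot strands of colours $i_k, i_m$ passing through it. This introduces a summation variable $j_l$ (admissible with $(i_k, i_m)$), together with a coefficient $\Delta_{j_l}/\theta(i_k, i_m, j_l)$. If the crossing circle carries a half-twist ($a_l \in H$), I apply \reflem{removeHalfTwist} to remove the half-twist, producing the factor $\gamma_{j_l}^{i_k, i_m}$ of item~(2). Then I apply \reflem{remove-circle} to the merged strand of colour $j_l$ encircled by the crossing circle of colour $a_l$, contributing the factor $\lambda_{j_l, a_l}$. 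After these operations are performed at every crossing circle, the diagram reduces to a trivalent graph $\Gamma$ on $S^2$ decorated by Jones--Wenzl idempotents, and items~(1) and~(2) of the statement have been produced, together with one $\theta$ denominator per crossing circle.

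\emph{Step 2: induction via triangle pops.} By \refcor{dualGraph}, the dual graph of $L$ is obtained from the complete graph on four vertices by exactly $c-2$ triangle moves; I induct on this number. In the base case, $L$ lies in the Borromean family and $\Gamma$ is, up to isotopy, a tetrahedral network whose Kauffman bracket evaluation is a single $\Tet$ coefficient. Combining this with the $\theta$-denominators from Step~1 via \reflem{Tet} yields one quantum $6j$-symbol of the required form, as $c-1=1$ in this case. For the inductive step, a single central subdivision of the nerve of $L$ creates, in $\Gamma$, a triangular subconfiguration bounded by three 3-vertices; one application of the triangle pop \refeqn{Triangleto6j} collapses this triangle, produces one new quantum $6j$-symbol of the form in item~(3), and consumes trihedron factors that include the $\theta$ denominator generated by Step~1 at the corresponding crossing circle. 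The reduced graph lies in the scope of the inductive hypothesis, contributing the remaining $c-2$ quantum $6j$-symbols for a total of $c-1$. That every summation variable $j_l$ appears in at least one $6j$-symbol follows from the fact that each central subdivision places its new crossing-circle edge into a triangle of the dual graph that is subsequently collapsed by a triangle pop.

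\emph{Main obstacle.} The principal technical difficulty is the global bookkeeping of trihedron coefficients: each merge in Step~1 contributes one $\theta$ in the denominator, each triangle pop contributes a ratio of four $\theta$'s under a square root (via \refeqn{Triangleto6j}), and the base-case tetrahedral evaluation contributes four more $\theta$'s under a square root (via \reflem{Tet}). Verifying that these balance perfectly—so that no unpaired $\theta$ factor remains and only the factors listed in items~(1)--(3) survive—requires a careful local argument showing that the trihedron factors absorbed by a single triangle pop correspond precisely to the 3-vertices of $\Gamma$ produced by the crossing-circle merge from the same inductive step. The underlying combinatorial fact ensuring this consistency is the duality between central subdivision on the nerve and triangle moves on the dual graph guaranteed by \refcor{dualGraph}; this duality also dictates the specific indices appearing in the $6j$-symbols in item~(3).
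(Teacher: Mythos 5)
Your Steps~1 and~2 follow the same outline as the paper's proof: locally resolve each crossing circle by the merge lemma, the circle-removal lemma, and (if needed) the half-twist lemma; recognise the resulting trivalent network as the decorated dual graph; then use \refcor{dualGraph} to reduce it to a tetrahedral net via $c-2$ triangle pops and conclude with \reflem{Tet}. The counting of $6j$-symbols, the identification of summation variables with dimer edges, and the claim that every triple contains exactly one summation variable are all correct. But the mechanism you propose for the $\theta$-bookkeeping in the ``main obstacle'' paragraph is not correct, and this is where the proof genuinely does not close.

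You assert that ``the trihedron factors absorbed by a single triangle pop correspond precisely to the 3-vertices of $\Gamma$ produced by the crossing-circle merge from the same inductive step.'' This is false. A triangle pop at the triangle created by a central subdivision removes three $3$-vertices $(p,i,j)$, $(k,q,j)$, $(i,k,l)$ and creates one $3$-vertex $(p,q,l)$, where $j$ is the base (the new dimer edge) and $l$ is the head (the dimer edge inherited from the subdivided triangle's dimer edge). Only the two factors $\theta(p,i,j)$, $\theta(k,q,j)$ come from the merge at the new crossing circle; they do indeed combine under the square root to cancel the merge denominator $\theta(i_k,i_m,j)^{-1}$. But the remaining factors $\theta(i,k,l)^{1/2}$ and $\theta(p,q,l)^{-1/2}$ involve the head colour $l$ and a freshly created $3$-vertex; they are not produced by this step's merge, and they do not vanish within the single inductive step. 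A summation variable may in fact serve as the head of a chain of successive triangle pops, each leaving behind one positive and one negative half-power of $\theta$, and the eventual cancellation happens only when that variable is finally killed either as the base of a later pop or in the terminal \reflem{Tet} evaluation. So the required argument is global (a chain-tracking argument over the life of each summation variable from its merge to its removal), not local to one inductive step. An induction can be made to work, but it would need a strengthened hypothesis that carries along, for each still-living dimer edge $j$, a pending factor of the form $\theta(a_1,b_1,j)^{-1/2}\theta(a_2,b_2,j)^{-1/2}$ indexed by the two current $3$-vertices meeting $j$; your current formulation omits this and would leave unpaired $\theta$'s at each inductive step past the first.
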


\begin{proof}
We compute $\langle S_{a_1}(z),\dots,S_{i_{n-c}}(z)\rangle_D$ as follows. 
At each crossing circle apply \reflem{squishingLemma} to the strands passing between the crossing circle, then remove the crossing circle by \reflem{remove-circle}. If necessary, remove half-twists by \reflem{removeHalfTwist}. The first step contributes a sum of the form $\sum_j\frac{\Delta_j}{\theta(i,k,j)}$ where $j$ is the colour on the merged strand and $i$ and $k$ are the colours on the knot strand passing through that crossing circle. This gives the term $\Delta_{j_l}$ in~(1). The second step contributes a factor of $\lambda_{j,a}$, where $a$ is the colour of the crossing circle. This gives $\lambda_{j_l,a_l}$ in~(1). If two strands coloured $i,k$ have a half-twist, then by \reflem{removeHalfTwist} removing the half-twist contributes a factor of $\gamma_{j}^{i,k}$, where $j$ is a summation variable. This gives the terms in~(2). 
Observe that after performing these operations we are left with a network $G'_{\pmb{i}}$ that is exactly the network obtained by decorating the dual graph $G'$ of $L$ with the relevant Jones-Wenzl idempotents: We have exactly reversed the process shown in \reffig{dual}. 

By \refcor{dualGraph}, after performing $c-2$ triangle pops on $G'_{\pmb{i}}$, we obtain a tetrahedral network, and we complete the evaluation using \reflem{Tet}. This proves there are $c-1$ quantum $6j$-symbols.

Consider the configuration for a triangle pop in the trivalent dual graph. Recall that every vertex in the dual graph is adjacent to exactly one dimer edge. The edges in the dimer are precisely the edges which are coloured by the summation variables in $G'_{\pmb{i}}$ after we have applied \reflem{squishingLemma}, \reflem{remove-circle} and \reflem{removeHalfTwist}.
It follows that every triple around a vertex in $G'_{\pmb{i}}$ consists of exactly one summation variable. Moreover, this property of triples is preserved by triangle pops; see \reflem{triangleLemma}.
It follows that in all of the $c-2$ triangle pops which are applied to $G'_{\pmb{i}}$, there are always precisely two summation variables involved. In our decoration of the network before the triangle pop, we can always take these summation variables to be in the position of the head and base. Moreover, the final tetrahedral network will have exactly one outer edge and one opposite inner edge coloured by summation variables, as in \reffig{CentralSub-TriangleMove}, left. This proves that the form of the quantum $6j$-symbol is as in~(3). In the process of obtaining $G'$ from the complete graph on four vertices by successive triangle moves, every edge in the dimer arises either in a triangle move or it is in the complete graph on four vertices to begin with. Hence every summation variable appears at least once in a quantum $6j$-symbol.

We have now shown that all the terms in~(1), (2), and~(3) appear as claimed. It remains to show that no other terms arise. The only other terms that occur in the process are trihedron coefficients. We show that the product of all trihedron coefficients is one. 
Trihedron coefficients arise from merging strands in \reflem{squishingLemma}, from triangle pops \reflem{triangleLemma}, and from the evaluation of the final tetrahedral network; no other trihedron coefficients arise.

In \reflem{squishingLemma}, trihedron entries are a triple of two colours from knot strands and a summation variable. In \reflem{triangleLemma}, using the observation above that exactly the head and base are labelled with summation variables, the trihedron entries that arise must be a triple of two colours from knot strands and one summation variable. Finally, when evaluating the final tetrahedron network using \reflem{Tet}, again by the observation that exactly dimer edges are coloured by the summation variables, it follows that the trihedron entries that appear have exactly two entries that are colours from knot strands, and exactly one summation variable. 
Thus, it suffices to show that for each summation variable $j$, the product of all trihedron coefficients containing $j$ is equal to one.

First suppose $c=2$. Then $G'_{\pmb{i}}$ is a tetrahedral net and $L$ is the Borromean rings or one of the Borromean twisted sisters as in  \reffig{BorrRingsandSisters}. Hence, $L$ has either one knot strand coloured $i$, or in the case of two half-twists, it has two knot strands coloured $i$ and $k$. Merging in \reflem{squishingLemma} introduces terms of the form $(\theta(i,k,j_1)\theta(i,k,j_2))^{-1}$, where $j_1$, $j_2$ are the summation variables; set $i=k$ if there is just one knot strand. Then applying \reflem{Tet} contributes
\[ \sqrt{\theta(i,k,j_1)\theta(i,k,j_1)\theta(i,k,j_2)\theta(i,k,j_2)} = \theta(i,k,j_1)\theta(i,k,j_2); \]
these trihedron coefficients cancel with $(\theta(i,k,j_1)\theta(i,k,j_2))^{-1}$ from \reflem{squishingLemma}. 

Next, suppose $c>2$. After applying \reflem{squishingLemma}, \reflem{remove-circle} and \reflem{removeHalfTwist}, each summation variable $j$ appears in \reflem{squishingLemma} meeting two 3-vertices labelled $(i,k,j)$, where $i$, $k$ colour knots strands. This term is multiplied by $\theta(i,k,j)^{-1}$, from \reflem{squishingLemma}.

Suppose $j$ is involved in a triangle pop as a base edge. By \reflem{triangleLemma}, the contribution is $\sqrt{\theta(i,k,j)\theta(i,k,j)} = \theta(i,k,j)$, and so this cancels with the trihedron coefficient from \reflem{squishingLemma}. The triangle popping removes $j$ from the diagram, so it contributes to no further trihedron coefficients.

Now suppose the first triangle pop involving $j$ is with $j$ as a head.
There may be a sequence of triangle pops with $j$ as head, and each contributes two trihedron coefficients involving $j$. The first coefficient is of the form $(\theta(x,y,j))^{1/2}$, where $(x,y,j)$ decorates a 3-vertex that is removed by the triangle pop. The other coefficient has the form $(\theta(p,q,j))^{-1/2}$, where $(p,q,j)$ decorates the new 3-vertex that is created by the triangle pop.

Consider additional terms involving $(p,q,j)$. There will be exactly one other such term, arising when this 3-vertex is removed, either by a triangle pop with $j$ as head, or as base, or by evaluating the final tetrahedron network. In all three cases, since $(p,q,j)$ is an existing vertex, it contributes $(\theta(p,q,j))^{1/2}$, which cancels with the $(\theta(p,q,j))^{-1/2}$ contributed at the initial creation of the 3-vertex. This argument also shows that if a triangle pop removes a 3-vertex that was created by a previous triangle pop, then the numerator term always cancels with a denominator from a previous step. So if a triangle pop is not the first to involve the edge coloured $j$, then the only $\theta$ contribution involving $j$ will contribute to the denominator, which subsequently cancels by the argument of this paragraph.

So consider the first triangle pop. Because the edge coloured $j$ arose by a merge move from \reflem{squishingLemma}, there are two 3-vertices meeting $j$ labeled $(i,k,j)$ at this initial step. One 3-vertex is removed under the first triangle pop, with $j$ as head by assumption, contributing $(\theta(i,k,j))^{1/2}$. Since the merge move already contributed $(\theta(i,k,j))^{-1}$, these simplify to give a contribution of $(\theta(i,k,j))^{-1/2}$. Then the argument proceeds as in the previous paragraph. At some stage, the other 3-vertex with colours $(i,k,j)$ will be removed, by a triangle pop with $j$ as head, or as base, or by the evaluation of the final tetrahedron network. As above, the removal contributes $(\theta(i,k,j))^{1/2}$, which cancels. 
\end{proof}

\section{Turaev--Viro invariants}\label{Sec:TV}
This section provides a new proof of \refconj{TVconj} for flat octahedral fully augmented links.

We utilise a formula of Detcherry, Kalfagianni and Yang \cite[Theorem~1.1]{DKYRTTV}, to relate the Turaev--Viro invariants of an octahedral fully augmented link complement to its coloured Jones polynomial:

\begin{theorem} [Theorem~1.1 of \cite{DKYRTTV}]\label{Thm:TVfromColJones}
Let $L$ be a link in $S^3$ with $n$ components.
Let $r=2m+1\ge3$ be an odd integer and $A$ a primitive $2r^{th}$ root of unity, with $q=A^2$. Then 
\begin{equation} \label{Eqn:TVasSumofJones}
TV_r(S^3\setminus L,q)=2^{n-1}(\eta_r')^2\sum_{1\leq\pmb{i}\leq m}|J_{L,\pmb{i}}(A)|^2.
\end{equation}
Here $\eta_r' = (A^2-A^{-2})/\sqrt{-r}$ and $\pmb{i}=(i_1,\dots,i_n)$ and $1\leq\pmb{i}\leq m$ means $1\leq i_k\leq m$ for each $k$.
\end{theorem}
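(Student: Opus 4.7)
The plan is to derive this formula from Roberts' realization of the Turaev--Viro invariant as the squared modulus of the Reshetikhin--Turaev invariant. For a closed oriented $3$-manifold $M$ one has $TV_r(M) = |RT_r(M)|^2$, and for a compact oriented $3$-manifold $N$ with boundary this extends to $TV_r(N) = RT_r(DN)$, where $DN$ is the boundary double of $N$. Applied to $N = S^3 \setminus \nu(L)$, this rewrites the left-hand side as an RT invariant of a closed manifold with a concrete surgery description in $S^3$ involving $L$ and a parallel copy $L^*$, coming from the two sheets of the double.

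The next step is to evaluate $RT_r(DN)$ via the surgery formula. Cable each surgery component with the Kirby colour $\omega_r = \sum_{i \in I_r} [i+1]\, e_i$ built out of Jones--Wenzl idempotents, multiply by the surgery normalization $\eta_r'$ for each such component, and expand in the Kauffman bracket. Because the two copies of $L$ appear as mirror images after doubling, the expansion separates into a sum over admissible multi-colours $\pmb{i}$ of products of two mutually conjugate decorated brackets of $L$, which up to the writhe corrections already absorbed into \refdef{CJP} equal $J_{L,\pmb{i}}(A)$ and $\overline{J_{L,\pmb{i}}(A)}$. Thus each summand is $|J_{L,\pmb{i}}(A)|^2$, and the prefactor $(\eta_r')^2$ comes from the two surgery normalizations coming from the two sheets.

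The last step is to collapse the summation range. A priori $\pmb{i}$ ranges over admissible multi-colours in $I_r^n$, but the symmetry $J_{L,r-i}(A) = \pm J_{L,i}(A)$, from the involution $i \mapsto r-i$ on admissible labels combined with the vanishing of the $(r-1)$-st Jones--Wenzl idempotent, together with the vanishing of $J_{L,\pmb{i}}$ when any $i_k = 0$, pairs up the terms $i_k$ and $r - i_k$ to collapse the sum to $1 \le \pmb{i} \le m$, contributing a factor $2^n$; absorbing one factor of $2$ into the overall normalization leaves the stated $2^{n-1}$.

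The main obstacle is bookkeeping: tracking framings, writhe corrections, and sign conventions through the Roberts--doubling procedure so that the summand is exactly $|J_{L,\pmb{i}}(A)|^2$ rather than that times some residual phase depending on linking numbers, and justifying Roberts' formula for manifolds with torus boundary with normalization conventions consistent with those of \refdef{CJP} at $q = \qroot$. A related subtlety is verifying that the Kirby-colour sum really factors cleanly as a product of conjugate contributions from $L$ and $L^*$, rather than mixing colours via a residual Hopf pairing across the doubling locus.
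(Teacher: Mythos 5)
The paper does not prove this theorem; it is quoted verbatim from Detcherry--Kalfagianni--Yang \cite{DKYRTTV} (their Theorem~1.1) and used as an external input in \refsec{TV}. So there is no internal proof in the present paper to compare against, and the only question is whether your sketch reproduces the argument in that reference.

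Your overall plan --- reduce $TV_r$ to a Reshetikhin--Turaev quantity via Roberts/Benedetti--Petronio, expand in coloured Jones polynomials, then collapse the colour range by a symmetry --- is indeed the strategy of \cite{DKYRTTV}, so the broad outline is right. But the middle step as you've written it is not how that proof goes, and it introduces difficulties that the actual argument avoids. In \cite{DKYRTTV} one works with the identity $TV_r(M) = \lVert RT_r(M)\rVert^2_{V(\partial M)}$ for compact oriented $M$, where the norm is taken in the TQFT vector space of $\partial M$. For $M = S^3\setminus\nu(L)$ one has $V(\partial M)\cong V(T^2)^{\otimes n}$, and $RT_r(M)$ has coordinates in the standard basis given (up to the $\eta'_r$ normalisation) by the coloured Jones polynomials $J_{L,\pmb{i}}(A)$; taking the norm squared produces $\sum_{\pmb{i}}|J_{L,\pmb{i}}(A)|^2$ directly. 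There is no need to produce a surgery presentation of the double of the link complement by a link $L\cup L^*$ cabled with Kirby colour, and indeed the double of $S^3\setminus\nu(L)$ does not admit the kind of ``$L$ plus a parallel copy'' surgery picture your sketch assumes --- the ``residual Hopf pairing across the doubling locus'' that you flag as a subtlety is a genuine obstruction to making that route work cleanly, not just bookkeeping.

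The collapse to $1\leq\pmb{i}\leq m$ also needs more than ``absorbing one factor of $2$ into the overall normalization.'' The symmetry $|J_{L,r-\pmb{i}}(A)|=|J_{L,\pmb{i}}(A)|$ together with vanishing at the boundary of the range does pair up colours, but the passage from the full colour range to $\{1,\dots,m\}^n$ and the appearance of exactly $2^{n-1}$ has to be checked against the precise $SO(3)$ versus $SU(2)$ normalisation conventions at $q=\qroot$, and this is one of the actual pieces of content in the cited proof. In short: right shape, but the surgery-on-a-double step should be replaced by the TQFT pairing, and the power of $2$ needs a genuine argument.
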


\begin{remark}
Note that \cite{DKYRTTV} uses a different normalisation of the coloured Jones polynomial than we used in the previous section, following \cite{KauffLins,LickorishTextbook}. Our definition takes into account a correction factor for the framing of the links, and differs from that of \cite{DKYRTTV} by a power of $A$. However, since $A$ is a root of unity and we are only interested in the modulus of $|J_{L,\pmb{i}}|$ in \refthm{TVfromColJones}, our formula will suffice.
\end{remark}

We will use \refthm{TVfromColJones}  with \refprop{ColJonesforOctFAL} and \refeqn{colJones} to evaluate limits of $TV_r$. By \refprop{ColJonesforOctFAL}, we will have terms involving $\Delta_j$, $\lambda_{j,a}$, $\gamma_j^{i,k}$, and quantum 6j-symbols. We will need further information on these terms to find and apply bounds.

We first consider $\Delta_j$ and $\lambda_{j,a}$. Recall $\Delta_j$ is the closure of the Jones--Wenzl idempotent in the plane evaluated in the Kauffman bracket, and $\lambda_{j,a}$ is defined in \reflem{remove-circle}. 

\begin{lemma}[Lemma~4 of \cite{KauffLins}]\label{Lem:jbounded}
The term $\Delta_{j} = (-1)^{j} [j+1]$, where $[n]$ is the quantum integer of \refdef{QuantumInt}. It is the $j^{th}$ Chebyshev polynomial of second kind in variable $(-A^2-A^{-2})$:
\[ \Delta_j=\frac{(-1)^j(A^{2(j+1)}-A^{-2(j+1)})}{A^2-A^{-2}} \]
If $q=A^2$ is a primitive $r$-th root of unity, where $r\ge3$ is odd, then $\Delta_{r-1}=0$ and for all $0\leq j\leq r-2,$ 
\begin{equation}\label{Eqn:Delta_n}
\Delta_j = (-1)^j \frac{\sin ( 2\pi (j+1)/r)}{\sin(2\pi/r)}\neq 0.
\end{equation}
\end{lemma}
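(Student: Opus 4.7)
The plan is to derive both parts of the statement from Wenzl's recursion for the Jones--Wenzl idempotents, matched against the standard Chebyshev recursion. Recall that $\Delta_n$ is by definition the Kauffman bracket evaluation of the planar closure of the $n$-th Jones--Wenzl idempotent $f_n$. Wenzl's inductive definition
\[ f_{n+1} = f_n \otimes \mathrm{id} - \frac{\Delta_{n-1}}{\Delta_n}\,(f_n \otimes \mathrm{id})\,U_n\,(f_n \otimes \mathrm{id}), \]
where $U_n$ denotes the cup-cap on the last two strands, together with the fact that every disjoint planar circle contributes the loop value $d = -A^2 - A^{-2}$, yields upon taking closure the three-term recursion
\[ \Delta_{n+1} = d\,\Delta_n - \Delta_{n-1}, \qquad \Delta_0 = 1,\quad \Delta_1 = d. \]

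First I would verify, directly from $[k] = (A^{2k} - A^{-2k})/(A^2 - A^{-2})$, the identity $[k+2] + [k] = (A^2+A^{-2})\,[k+1]$, which is a two-line expansion. Setting $\Delta_n' := (-1)^n[n+1]$ and applying this identity then shows $\Delta_{n+1}' = d\,\Delta_n' - \Delta_{n-1}'$, while the initial values $\Delta_0' = [1] = 1$ and $\Delta_1' = -[2] = -A^2-A^{-2} = d$ match those of $\Delta_n$. An induction concludes that $\Delta_n = (-1)^n[n+1]$ for all $n$. The Chebyshev identification then follows at once from the standard recursion $S_{n+1}(x) = x\,S_n(x) - S_{n-1}(x)$ with $S_0 = 1$ and $S_1 = x$, after substituting $x = -A^2 - A^{-2}$.

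For the trigonometric reformulation, I would substitute $q = A^2 = \qroot$ and use $A^{2k} - A^{-2k} = 2\sqrt{-1}\sin(2\pi k/r)$ to obtain
\[ \Delta_j = (-1)^j \frac{\sin(2\pi(j+1)/r)}{\sin(2\pi/r)}. \]
The denominator is nonzero because $r \geq 3$, and the numerator vanishes precisely when $(j+1)/r \in \mathbb{Z}$, which in the range $0 \leq j \leq r-1$ occurs only at $j = r-1$. This yields both $\Delta_{r-1} = 0$ and $\Delta_j \neq 0$ for $0 \leq j \leq r-2$. The main obstacle is little more than carefully matching sign conventions and initial values between the Wenzl and Chebyshev recursions; no deeper input is required, which is consistent with this being a well-known result from \cite{KauffLins}.
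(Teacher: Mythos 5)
The paper itself gives no proof of this lemma; it simply cites Lemma~4 of Kauffman--Lins. Your derivation via Wenzl's recursion is the standard argument and is essentially the one found in that reference, so there is no conflict of approach to report. The algebraic part is correct: taking the planar closure (trace) of Wenzl's recursion does yield $\Delta_{n+1}=d\,\Delta_n-\Delta_{n-1}$ with $\Delta_0=1$, $\Delta_1=d$, the identity $[k+2]+[k]=(A^2+A^{-2})[k+1]$ is elementary, and the induction and the Chebyshev matching with $S_0=1$, $S_1=x$, $S_{n+1}=xS_n-S_{n-1}$ all go through cleanly.

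One place you gloss over a point that is genuinely load-bearing: you assert that the numerator $\sin\bigl(2\pi(j+1)/r\bigr)$ vanishes precisely when $(j+1)/r\in\ZZ$. What the sine actually gives you is vanishing iff $2(j+1)/r\in\ZZ$. For $r$ \emph{even} this is strictly weaker; e.g.\ $r=2k$, $j=k-1$ satisfies $0\le j\le r-2$ yet makes the numerator zero. It is only because $r$ is odd that $r\mid 2(j+1)$ is equivalent to $r\mid(j+1)$, which is exactly what forces nonvanishing on the whole range $0\le j\le r-2$ and isolates $j=r-1$ as the unique zero among $0\le j\le r-1$. You state $r$ is odd in the hypothesis but never invoke it in the argument; you should make that step explicit, since the conclusion is false without it. With that one-line fix the proof is complete and matches the standard treatment.
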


A quantum $6j$ symbol is associated with an admissible 6-tuple $(i,j,k,l,m,n)$, as follows.

\begin{definition} \label{Def:r-admis}
A triple $(i,j,k)$ of elements of $I_r$ is called \emph{$r$-admissible} (or \emph{admissible}), if
\begin{enumerate}
\item $i+j+k$ is even,
\item $i\leq j+k$, $j\leq k+i$, $k\leq i+j$, and
\item $i+j+k\leq 2r-4$.
\end{enumerate}
A 6-tuple $(i,j,k,l,m,n)$ of elements of $I_r$ is \textit{admissible} if each of the triples $(i,j,k)$, $(j,l,n)$, $(i,m,n)$, $(k,l,m)$ are $r$-admissible.
\end{definition}

Throughout, let $n_r=\frac{r-1}{2}$ if $r\equiv1\mod4$ and $n_r=\frac{r-3}{2}$ if $r\equiv3\mod4$. 
Note that for $j\in I_r$, if $(n_r,n_r,j)$ is admissible, then the admissibility conditions imply that $j\in\{0,2,4,\dots,r-3\}$.

\begin{lemma} \label{Lem:DeltaLambdaSigns}
Let $n_r=(r-1)/2$ if $r\equiv 1\mod 4$ and $n_r=(r-3)/2$ if $r\equiv 3\mod 4$.
For $j,j_1,j_2\in I_r$ such that $(n_r,n_r,j),(n_r,n_r,j_1),(n_r,n_r,j_2)$ are $r$-admissible, the following hold.
\begin{enumerate}
\item The sign of $\Delta_j$ is positive if $0\leq j\leq (r-3)/2$ and negative if $(r-1)/2\leq j\leq r-2$.

\item The sign of $\lambda_{j,n_r}$, defined as in \refeqn{LambdaFormula}, is
  \[ \sgn(\lambda_{j,n_r})=\begin{cases}
  \sgn(\Delta_j), & r\equiv3\mod4, \\
  -\sgn(\Delta_j), & r\equiv1\mod4.
  \end{cases}\]
\end{enumerate} 
\end{lemma}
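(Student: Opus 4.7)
The plan is to deduce both sign statements directly from the closed-form expressions in \reflem{jbounded} and \refeqn{LambdaFormula}, with two elementary parity observations doing all the work. First, writing $r=4k+1$ or $r=4k+3$ one computes $n_r=2k$ in either case, so $n_r$ is always even and $(-1)^{n_r}=1$. Admissibility of $(n_r,n_r,j)$ forces $2n_r+j$ to be even, so $j$ is also even and $(-1)^j=1$; combined with $j\le 2r-4-2n_r=r-3$, the allowed values are $j\in\{0,2,4,\ldots,r-3\}$. Hence $\sgn(\Delta_j)=\sgn(\sin(2\pi(j+1)/r))$ by \reflem{jbounded}, and
\[
\lambda_{j,n_r}=\frac{\sin(2\pi(j+1)(n_r+1)/r)}{\sin(2\pi(j+1)/r)}
\]
by \refeqn{LambdaFormula}.

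Part (1) is then immediate: if $0\le j\le (r-3)/2$ then $2\pi(j+1)/r\in(0,\pi)$, so the sine is positive; if $(r-1)/2\le j\le r-2$ then $2\pi(j+1)/r\in(\pi,2\pi)$, so the sine is negative.

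For part (2), I substitute $n_r+1=(r+1)/2$ when $r\equiv 1\pmod 4$ and $n_r+1=(r-1)/2$ when $r\equiv 3\pmod 4$ into the numerator. The argument of the sine becomes $\pi(j+1)\pm\pi(j+1)/r$, and since $j$ is even the angle-addition formula yields
\[
\sin(2\pi(j+1)(n_r+1)/r)=\mp\sin(\pi(j+1)/r),
\]
with the minus sign in the $r\equiv 1$ case and the plus sign in the $r\equiv 3$ case. Using the double-angle identity $\sin(2\pi(j+1)/r)=2\sin(\pi(j+1)/r)\cos(\pi(j+1)/r)$ together with the fact that $\sin(\pi(j+1)/r)>0$ for $j+1\in\{1,3,\ldots,r-2\}$, the positive factor cancels and $\lambda_{j,n_r}=\mp 1/(2\cos(\pi(j+1)/r))$. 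Since $\sgn(\Delta_j)=\sgn(\sin(2\pi(j+1)/r))=\sgn(\cos(\pi(j+1)/r))$ (using $\sin(\pi(j+1)/r)>0$ again), the sign relations in (2) follow directly.

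The only real care required is the sign bookkeeping at the angle-addition step; with the parities of $n_r$ and $j$ both pinned down to even, the whole argument is a short trigonometric computation and no substantive obstacle arises.
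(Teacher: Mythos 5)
Your proof is correct and follows essentially the same route as the paper: both reduce the problem to parity observations ($n_r$ even, $j$ even by admissibility) and then track the sign of $\sin(2\pi(j+1)(n_r+1)/r)$ after rewriting the angle as $\pi(j+1)\pm\pi(j+1)/r$. The only cosmetic difference is that you push through angle-addition and double-angle identities to obtain the explicit form $\lambda_{j,n_r}=\mp(2\cos(\pi(j+1)/r))^{-1}$, whereas the paper stops at locating the angle modulo $2\pi$ in the upper or lower half of the circle.
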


\begin{proof}
For (1), by \reflem{jbounded},
\[ \Delta_j = (-1)^j \, \frac{\sin(2\pi(j+1)/r)}{\sin(2\pi/r)} \in \RR \]
By admissibility conditions on $(n_r, n_r, j)$, the integer $j$ must be even. The denominator $\sin(2\pi/r)$ is positive for all $r\ge3$.
The term $(2\pi(j+1))/r$ lies in $(0,\pi)$ for $0\leq j\leq (r-3)/2$, and lies in $(\pi,2\pi)$ for $(r-1)/2\leq j\leq r-2$. The statement of (1) follows. 

For (2), by \refeqn{LambdaFormula}, we have
\[ \lambda_{j,n_r} = \frac{\sin(2\pi(j+1)(n_r+1)/r)}{\sin(2\pi(j+1)/r)}\in\RR
\]
Now,
\[\frac{2\pi}{r}(j+1)(n_r+1) = \begin{cases}
  \pi(j+1)-\frac{\pi}{r}(j+1), & r\equiv 3\mod4, \\
  \pi(j+1)+\frac{\pi}{r}(j+1), & r\equiv 1\mod4.
\end{cases}\]
By the argument of (1), $\frac{\pi}{r}(j+1) \in (0,\frac{\pi}{2})$ for $0\leq j\leq (r-3)/2$ and $\frac{\pi}{r}(j+1) \in (\frac{\pi}{2},\pi)$ for $(r-1)/2\leq j\leq r-2$.
Note also that $j+1$ is odd since $j$ is even by admissibility. Hence, if $r\equiv 3\mod4$, then 
$\frac{2\pi}{r}(j+1)(n_r+1) \in  (j \pi , \pi (j+1)-\frac{\pi}{2}) \cup (\pi (j+1)-\frac{\pi}{2}, \pi (j+1))$, which is in the first and second quadrant of the unit circle. Therefore, $\sin\left(\frac{2\pi}{r}(j+1)(n_r+1)\right)>0$, so the sign of $\lambda_{j,n_r}$ is given by $\sgn \left(\sin\left(2\pi(j+1)/r\right)\right) = \sgn(\Delta_j)$. If $r\equiv 1\mod4$, then $\frac{2\pi}{r}(j+1)(n_r+1)$ is in the third and fourth quadrant and the result follows analogously.
\end{proof}

We need to determine the signs of the quantum $6j$ symbols as well. To do so, we need more information on these symbols. We now recall their full definition, and related results.

\begin{definition} \label{Def:6j-defn}
The \emph{quantum $6j$-symbol} associated with an admissible 6-tuple $(i,j,k,l,m,n)$ is the complex number
\begin{equation}
  \begin{vmatrix}
    i & j & k \\
    l & m & n
  \end{vmatrix} =(-1)^{(i+j+k+l+m+n)/2}\Delta(ijk)\Delta(imn)\Delta(ljn)\Delta(lmk)  \sum_{z=\max\{T_1,T_2,T_3,T_4\}}^{\min\{Q_1,Q_2,Q_3\}}S_z
\end{equation}
Here $\Delta(ijk)$ is given by
\begin{equation}\label{Eqn:DeltaEqn}
  \Delta(ijk)  = \left(\frac{[\frac{i+j-k}{2}]![\frac{i-j+k}{2}]![\frac{j+k-i}{2}]!}{[\frac{i+j+k}{2}+1]!} \right)^{1/2}
\end{equation}
The term $S_z$ is defined as
\begin{equation}\label{Eqn:sz}
S_z = \frac{(-1)^z[z+1]!}{[z-T_1]![z-T_2]![z-T_3]![z-T_4]![Q_1-z]![Q_2-z]![Q_3-z]!} \end{equation}
where the values $T_p$ correspond to faces of the tetrahedron as in \reffig{6jtetra}:
\begin{equation}\label{Eqn:Tdefn}
  T_1=\frac{i+j+k}{2}, \quad T_2=\frac{j+l+n}{2}, \quad T_3=\frac{i+m+n}{2}, \quad T_4=\frac{k+l+m}{2},
\end{equation}
The values $Q_q$ correspond to quadrilaterals separating two pairs of vertices as in \reffig{6jtetra}:
\begin{equation}\label{Eqn:Qdefn}
  Q_1=\frac{i+j+l+m}{2}, \quad Q_2=\frac{i+k+l+n}{2}, \quad Q_3=\frac{j+k+m+n}{2}
\end{equation}
\end{definition}

\begin{figure}
    \centering
    \[
\vcenter{\hbox{
\begin{overpic}[scale=1.2]{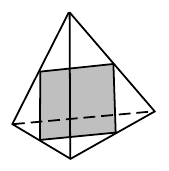}
\put(15, 57){$i$}
\put(43,73){$j$}
\put(18,5){$k$}
\put(66, 9){$l$}
\put(28,34){$m$}
\put(71,63){$n$}
\end{overpic} }}
\]
    \caption{Each of $T_1$, $T_2$, $T_3$, $T_4$ corresponds to a face of the tetrahedron. Each of $Q_1$, $Q_2$. $Q_3$ corresponds to a quadrilateral separating two pairs of vertices.}
    \label{Fig:6jtetra}
\end{figure}

In~\cite[Theorem~A.1]{MainPaper}, the authors prove the following result of Costantino~\cite{Cost6jSymb} for the root of unity $\qroot$. The form of the theorem that we need appears in~\cite{KumarMelbyTVadditivity}.

\begin{theorem}[Theorem~3.4~(1) of \cite{KumarMelbyTVadditivity}]\label{Thm:6TuplesandHypIdealTetra} 
If $(n_1^{(r)},n_2^{(r)},n_3^{(r)},n_4^{(r)},n_5^{(r)},n_6^{(r)})$ is a sequence of admissible 6-tuples with
\begin{enumerate}[label=(\roman*)]
\item $0\leq Q_j-T_i\leq (r-2)/2$ for $1\leq i\leq 4$, $1\leq j\leq3$, and
\item $(r-2)/2\leq T_i\leq r-2$ for $1\leq i\leq4$,
\end{enumerate}
then
for each $r$, the sign of $S_z$ is independent of the choice of $z$, for $\max\{T_i\}\leq z\leq\min\{Q_j\}$.
\end{theorem}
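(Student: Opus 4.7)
The plan is to show $\sgn(S_{z+1}) = \sgn(S_z)$ for every consecutive pair $z, z+1$ in the range $\max\{T_i\} \le z \le \min\{Q_j\}$ by proving that the ratio $S_{z+1}/S_z$ is a positive real whenever both terms are nonzero. The possibility of vanishing values at the upper end of the range does not obstruct the conclusion, and chaining the pairwise comparisons gives the statement.

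The first step is a direct calculation that cancels factorials in \refeqn{sz}:
\[
\frac{S_{z+1}}{S_z} = \frac{-[z+2]\,\prod_{j=1}^{3} [Q_j-z]}{\prod_{i=1}^{4} [z+1-T_i]}.
\]
This reduces the problem to determining signs of seven quantum integers. At $q=\qroot$, the formula $[n] = \sin(2\pi n/r)/\sin(2\pi/r)$ gives $[n]>0$ for $n\in\{1,\dots,(r-1)/2\}$ and $[n]<0$ for $n\in\{(r+1)/2,\dots,r-1\}$.

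The second step is a sign check using the admissibility hypotheses. Fix $i$; condition~(i) gives $z+1-T_i \le \min_j Q_j + 1 - T_i \le (r-2)/2 + 1$, and since $r$ is odd and $z+1-T_i$ is a positive integer, we obtain $z+1-T_i\in\{1,\dots,(r-1)/2\}$, so $[z+1-T_i]>0$. For each $j$, condition~(i) applied with $i^*$ realising $\max T_{i^*}$ gives $Q_j - z \le Q_j - T_{i^*} \le (r-2)/2$, whose integer form (with $r$ odd) is $Q_j-z \le (r-3)/2$, so $[Q_j-z]>0$. For $[z+2]$, condition~(ii) yields $\max T_i \ge (r-1)/2$ and hence $z+2 \ge (r+3)/2$; in the regime where both $S_z$ and $S_{z+1}$ are nonzero one has $z+2 \le r-1$ (otherwise either $[z+1]!$ or $[z+2]!$ contains the vanishing factor $[r]$), so $z+2\in\{(r+3)/2,\dots,r-1\}\subset\{(r+1)/2,\dots,r-1\}$ and $[z+2]<0$.

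Combining, $\sgn(S_{z+1}/S_z) = -\sgn([z+2]) \cdot \prod_j \sgn([Q_j-z]) / \prod_i \sgn([z+1-T_i]) = -(-1)\cdot(+1)^3/(+1)^4 = +1$. The main obstacle is the control of $[z+2]$ at the upper end of the range, where $z+2$ can in principle exceed $r-1$: one checks that $z+2=r$ forces $S_{z+1}=0$ and $z+2>r$ (equivalently $z\ge r-1$) forces $S_z=0$, so vanishing always occurs before any sign reversal, and all nonzero $S_z$ in the admissible range share a common sign.
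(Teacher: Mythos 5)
This theorem is cited from \cite{KumarMelbyTVadditivity} and the paper does not reproduce a proof, so there is no internal argument to compare against; I can only assess your proof on its own. It is correct. The ratio computation $S_{z+1}/S_z = -[z+2]\prod_j[Q_j-z]\big/\prod_i[z+1-T_i]$ follows directly from \refeqn{sz}, the sign dichotomy of $[n]$ at $q=\qroot$ (positive for $1\le n\le(r-1)/2$, negative for $(r+1)/2\le n\le r-1$) is standard, and hypotheses (i)--(ii) together with $r$ odd do place $z+1-T_i$ and $Q_j-z$ in $\{1,\dots,(r-1)/2\}$ and $z+2$ in $\{(r+3)/2,\dots,r-1\}$ whenever both terms are nonzero, giving a positive ratio. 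Your handling of the vanishing terms is also sound and is the one point requiring care: by (i)--(ii), each of $z-T_i$ and $Q_j-z$ is at most $(r-2)/2<r$ throughout the admissible range, so the denominator factorials never vanish; hence $S_z=0$ exactly when $z\ge r-1$, which is a terminal subinterval of $[\max T_i,\min Q_j]$, so the nonvanishing $S_z$ occur at consecutive values of $z$ and the pairwise comparisons chain without interruption. This telescoping-ratio approach is the natural one and almost certainly matches the cited source.
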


\begin{lemma}\label{Lem:6jSign}
Let $n_r=(r-1)/2$ if $r\equiv 1\mod 4$, and $n_r=(r-3)/2$ if $r\equiv 3\mod 4$. Then the quantum $6j$-symbol
\[\begin{vmatrix}
  n_r & n_r & j_1 \\
  n_r & n_r & j_2
\end{vmatrix}\]
is real-valued. Its sign is positive if $r\equiv3\mod4$ and negative if $r\equiv1\mod4$.
\end{lemma}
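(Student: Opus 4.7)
The plan is to substitute the 6-tuple $(n_r, n_r, j_1, n_r, n_r, j_2)$ directly into \refdef{6j-defn}. The crucial observation is that the four triples appearing in the $\Delta$-factors collapse to just two: each of $(n_r, n_r, j_1)$ and $(n_r, n_r, j_2)$ appears twice. Hence the product of the four $\Delta$-factors equals $\Delta(n_r, n_r, j_1)^2 \cdot \Delta(n_r, n_r, j_2)^2$, which by \refeqn{DeltaEqn} is a product of rational expressions in real quantum integers, and is thus \emph{manifestly} real regardless of how one interprets the square roots in the individual $\Delta$'s. Since the prefactor $(-1)^{(i+j+k+l+m+n)/2}$ and each $S_z$ are also real, this proves the first claim that the $6j$-symbol is real-valued.

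For the sign, I would track three pieces. Using that $n_r$ is even in both residue classes, the prefactor $(-1)^{(4n_r + j_1 + j_2)/2}$ simplifies to $(-1)^{(j_1+j_2)/2}$. Next, from \refeqn{DeltaEqn} one has
\[\Delta(n_r, n_r, j_a)^2 = \frac{[n_r - j_a/2]!\, [j_a/2]!^2}{[n_r + j_a/2 + 1]!}.\]
The three factorials in the numerator have arguments bounded by $n_r \leq (r-1)/2$, and since $[\ell]>0$ for $1\leq\ell\leq (r-1)/2$, they are positive. The denominator factorial crosses the sign-change threshold: a direct count of negative $[\ell]$'s for $(r+1)/2 \leq \ell \leq n_r + j_a/2 + 1$ shows $[n_r + j_a/2 + 1]!$ has sign $(-1)^{j_a/2 + 1}$ when $r\equiv 1\pmod 4$ and $(-1)^{j_a/2}$ when $r\equiv 3\pmod 4$. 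Multiplying over $a = 1, 2$, the parity-dependent shift appears squared and cancels, so the sign of the $\Delta$-product is $(-1)^{(j_1+j_2)/2}$ in both cases. The prefactor and $\Delta$-product therefore combine to $+1$, and the sign of the $6j$-symbol equals $\operatorname{sgn}\bigl(\sum_z S_z\bigr)$.

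To analyze the sum, I compute $T_i \in \{n_r + j_1/2, n_r + j_2/2\}$ and $Q_j \in \{2n_r, n_r + (j_1+j_2)/2\}$, so $\min Q_j - \max T_i = \min(n_r - \max(j_1,j_2)/2,\ \min(j_1,j_2)/2)$. This difference is zero exactly when $\min(j_1, j_2) = 0$ or $\max(j_1, j_2) = 2n_r$; in those edge cases the sum has a single term, so its sign is the sign of that unique $S_z$. Otherwise, $j_a \geq 2$ for both $a$ and $\max(j_a) < 2n_r$, from which one verifies directly that both hypotheses (i) and (ii) of \refthm{6TuplesandHypIdealTetra} are met, so $\operatorname{sgn}(S_z)$ is still independent of $z$. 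In either case it suffices to evaluate at $z_0 = \max T_i$. Taking $j_1 \geq j_2$ so that $z_0 = n_r + j_1/2$, the denominator factorials of $S_{z_0}$ have arguments at most $n_r$ (hence positive), while the numerator is $(-1)^{n_r + j_1/2}[n_r + j_1/2 + 1]! = (-1)^{j_1/2}[n_r + j_1/2 + 1]!$. Combining with the sign of $[n_r + j_1/2 + 1]!$ computed above yields $\operatorname{sgn}(S_{z_0}) = -1$ for $r \equiv 1 \pmod 4$ and $\operatorname{sgn}(S_{z_0}) = +1$ for $r \equiv 3 \pmod 4$, as claimed.

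The main obstacle is the careful bookkeeping of signs across the threshold $\ell = (r+1)/2$ in the quantum factorials, together with verifying that the hypotheses of \refthm{6TuplesandHypIdealTetra} are met precisely when the sum has more than one term. What makes the argument collapse cleanly is the symmetry of the 6-tuple: since each triple appears twice, the parity-dependent contributions in the $\Delta$-product double up and cancel against the prefactor, leaving a final sign that depends only on $r \bmod 4$.
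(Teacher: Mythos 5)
Your proof is correct, and it follows the paper's overall strategy (substitute the symmetric $6$-tuple into \refdef{6j-defn}, reduce the $\Delta$-product to $\Delta(n_r,n_r,j_1)^2\Delta(n_r,n_r,j_2)^2$, track signs of quantum factorials across the threshold $(r-1)/2$, and invoke \refthm{6TuplesandHypIdealTetra} to pin the sign of the $S_z$-sum to its evaluation at $z_0=\max T_i$), but it improves on the paper's exposition in two places. First, for reality you give a direct argument — each $\Delta$-factor appears squared, so the product is a rational expression in quantum integers and thus manifestly real — whereas the paper outsources this to Lemma~4.7 of~\cite{KumarMelbyTVadditivity}. Second, you handle the edge cases ($j_1=0$, $j_2=0$, or $\max j_a=2n_r$) uniformly by observing that $\min Q_j-\max T_i=\min\bigl(\min(j_a)/2,\ n_r-\max(j_a)/2\bigr)$ vanishes exactly there, so the $S_z$-sum has a single term and the sign computation at $z_0$ covers it without needing \refthm{6TuplesandHypIdealTetra} at all; the paper instead treats $j_a=0$ by a separate theta-net collapse showing the $6j$-symbol equals $\Delta_{n_r}^{-1}$. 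Your route is more self-contained and avoids the case split, at the cost of losing the pleasant closed-form $\Delta_{n_r}^{-1}$ identity for the degenerate case. One small point worth making explicit: when $\max(j_a)=2n_r$ with $\min(j_a)\ge 2$, hypotheses~(i),~(ii) of \refthm{6TuplesandHypIdealTetra} actually still hold (so your dichotomy is slightly stronger than it needs to be), but since the sum is a single term there anyway, nothing is lost.
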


\begin{proof}
The given quantum $6j$-symbol is real-valued due to \cite[Lemma~4.7]{KumarMelbyTVadditivity}. The rest of the proof considers its sign, appealing to \refdef{6j-defn}. 

For the admissible 6-tuple $(n_r, n_r, j_1, n_r, n_r, j_2)$, the values of $T_i$ and $Q_j$ are as follows:
\[ T_1=T_4 = n_r+ \frac{j_1}{2}, \quad T_2=T_3=n_r+\frac{j_2}{2}, \quad Q_1=2n_r, \quad Q_2=Q_3=n_r + \frac{j_1+j_2}{2}
\]
Thus $Q_j-T_i$ is one of $n_r-(j_1/2)$, $n_r-(j_2/2)$, $j_1/2$, or $j_2/2$. Note also that by admissibility, $j_1, j_2\in \{0, 2, 4, \dots, r-3\}$. 
Then by \refdef{6j-defn}, 
\begin{equation}\label{Eqn:6jForSign}
\begin{vmatrix}
n_r & n_r & j_1 \\
n_r & n_r & j_2
\end{vmatrix}
=(-1)^{(j_1 + j_2)/2} \Delta(n_r,n_r,j_1)^2 \Delta(n_r,n_r,j_2)^2
\sum_{z=n_r+\frac12 \max\{j_1,j_2\}}^{\min\{2n_r,n_r+\frac12(j_1+j_2)\}} S_z^{j_1,j_2}
\end{equation}
where
\begin{equation} \label{Eqn:nr6j}
S_z^{j_1,j_2}=\frac{(-1)^z[z+1]!}{([z-n_r-\frac{j_1}{2}]!)^2([z-n_r-\frac{j_2}{2}]!)^2[2n_r-z]!([n_r-\frac{j_1}{2}-\frac{j_2}{2}-z]!)^2}
\end{equation}

Consider the case where $j_1,j_2\neq0$. We show that the assumptions of \refthm{6TuplesandHypIdealTetra} hold if $j_1,j_2\neq0$. 
By admissibility, for $l\in\{1,2\}$, we have $j_l/2 \geq 0$ and $j_l/2 \leq n_r$, hence $n_r-(j_l/2)\geq 0$. This gives the lower inequality for assumption~(i).  For the upper, $j_l\leq r-3$ so $(j_l/2)\leq (r-3)/2< (r-2)/2$. Also, $n_r-(j_l/2)\leq n_r=(r-3)/2$ if $r\equiv3\mod4$. If $r\equiv 1\mod 4$, since $j_l\neq 0$, we have $n_r-(j_l/2)\leq (r-1)/2 - 1/2$. This gives the upper bound of~(i).

For assumption~(ii) of \refthm{6TuplesandHypIdealTetra}, suppose first $r\equiv1\mod4$. Then $(r-2)/2<n_r$ so $(r-2)/2\leq n_r + (j_l/2)$.
Also, $n_r+(j_l/2) \leq n_r + (r-3)/2 = r-2$. So~(ii) holds when $r\equiv1\mod4$.
Now suppose $r \equiv 3\mod 4$. Then $n_r + (j_l/2) \leq n_r + (r-3)/2 = 2n_r < r-2$, and $n_r + (j_l/2) \ge (r-2)/2$ if $j_l>0$. So~(ii) holds provided $j_l\neq0$.
Then \refthm{6TuplesandHypIdealTetra} holds when $j_l\neq 0$. 
    
By \refthm{6TuplesandHypIdealTetra}, the sign of $S_z^{j_1,j_2}$ is independent of $z$. For this case, we show the sign of $S_z^{j_1,j_2}$ is also independent of $j_1,j_2$. Possible contributions to the sign in \refeqn{nr6j} come from terms $(-1)^z$, $[z+1]!$, and $[2n_r-z]!$, since the other terms are squares of real numbers (see \reflem{jbounded}) hence always positive. Since the sign is independent of $z$, to compute the sign it suffices to find the signs of terms $[z+1]!$ and $[2n_r-z]!$ for any value of $z$ in the range of the sum of \refeqn{6jForSign}. In particular, let $j_m := \max\{j_1,j_2\}$, and consider the minimum value $z = n_r + j_m/2$. Then $[z+1]!=[n_r+(j_m/2)+1]!$ and $[2n_r-z]=[n_r-(j_m/2)]!$.

By our work checking~(i) of \refthm{6TuplesandHypIdealTetra}, we have
$0 \leq n_r - (j_m/2) \leq (r-2)/2$. 
For any $k\in \ZZ$ satisfying $0 \leq k \leq (r-2)/2$, the value $2\pi k/r$ must lie in Quadrants~I or~II. Then
\begin{equation}\label{Eqn:NR}
[n_r-(j_m/2)]! = \prod_{k=1}^{n_r-(j_m/2)} \frac{\sin(2\pi k/r)}{\sin(2\pi/r)} >0
\end{equation}

By our work checking~(ii), $r/2 < n_r + (j_m/2)+1 \leq r-1$ (note the strict inequality since $j\neq0$ so $j\geq2$ by admissibility). Any $k\in\ZZ$ in this range satisfies $\pi < (2\pi k)/r \leq 2\pi(r-1)/r$, hence it lies in Quadrants~III or~IV and the value of $\sin(2\pi k/r)$ is negative. Then
\begin{equation}\label{Eqn:NRP}
[n_r + (j_m/2)+1]! = \prod_{k=1}^{(r-1)/2} \frac{\sin(2\pi k/r)}{\sin(2\pi/r)} \prod_{(r+1)/2}^{n_r+(j_m/2)+1} \frac{\sin(2\pi k/r)}{\sin(2\pi/r)}
\end{equation}
The first product consists of all positive terms, the second all negative, hence its sign is given by $(-1)^{n_r+(j_m/2)+1-(r-1)/2}$. It follows that the sign of $S_z^{j_1,j_2}$ is given by the product of this sign and $(-1)^{n_r+ (j_m/2)}$, which is $(-1)^{-(r-3)/2}$ since $j_m$ is even. Thus the sign is independent of $z$, $j_1$, $j_2$. It is $1$ if $r\equiv 3\mod4$ and $-1$ if $r\equiv 1\mod 4$. 

Now, for $l\in\{1,2\}$, by \refeqn{DeltaEqn}, 
\[
\Delta(n_r,n_r,j_l)^2=\frac{[n_r-\frac{j_l}{2}]!([\frac{j_l}{2}]!)^2}{[n_r+\frac{j_l}{2}+1]!}
\]
Since $([\frac{j_l}{2}]!)^2$ is positive by \reflem{jbounded}, and $[n_r-j_\ell/2]!$ is positive by \refeqn{NR}, the sign is that of \refeqn{NRP}, or
\[\sgn \left(\Delta(n_r,n_r,j_l)^2 \right) = (-1)^{n_r+(j_l/2)-(r-3)/2}\]
It follows that
\[
\sgn\left((-1)^{(j_1+j_2)/2}\Delta(n_r,n_r,j_1)^2\Delta(n_r,n_r,j_2)^2 \right)=1\]
This concludes the proof in the case $j_1,j_2\neq0$.

Now, suppose one of $j_1$ or $j_2$ is zero.
In this case, consider
$\Tet\begin{bmatrix}n_r & n_r & j_1 \\n_r & n_r & j_2 \end{bmatrix}$
in \reffig{theta-net}~(3). If one of $j_1$, $j_2$ is zero, without loss of generality say $j_1=0$, then there are two 3-vertices in that figure labeled $(n_r, n_r, 0)$. By \refdef{3vertDef}, it follows that labels for $j$, $k$, $c$ in \reffig{theta-net}~(1) are all zero, and labels $a$, $b$, $i$ are all $n_r$. This describes a single edge decorated by the Jones--Wenzl idempotent labeled $n_r$. Thus, we may remove the edge of the tetrahedral network labeled $0$ and replace the adjacent two edges labeled $n_r$ by single edges labeled $n_r$. The result is a theta net labeled $n_r$, $n_r$, $j_2$. Noting the trihedron coefficient is symmetric in its arguments, we conclude
\[\Tet\begin{bmatrix}
n_r & n_r & 0 \\
n_r & n_r & j_2
\end{bmatrix}=\theta(n_r,n_r,j_2)\]
Moreover, a similar argument as above shows that the theta net labeled $(n_r, n_r, 0)$ is just the closure of the Jones--Wenzl idempotent in the plane, so $\theta(n_r,n_r,0)=\Delta_{n_r}$.
Hence, by \refeqn{Tetrato6j},
\[\begin{vmatrix}
n_r & n_r & 0 \\
n_r & n_r & j_2
\end{vmatrix} = \frac{1}{\Delta_{n_r}\theta(n_r,n_r,j_2)}
\Tet\begin{bmatrix}
n_r & n_r & 0 \\
n_r & n_r & j_2
\end{bmatrix} = \Delta_{n_r}^{-1} \]

Applying \reflem{DeltaLambdaSigns}~(1), 
\[ \sgn\left(\begin{vmatrix}
  n_r & n_r & 0 \\
  n_r & n_r & j_2
\end{vmatrix}\right)= \sgn(\Delta_{n_r}) = 
\begin{cases}
  1, & r\equiv3\mod4, \\
  -1, & r\equiv1\mod4.
\end{cases} \]

Finally, suppose $j_1=j_2=0$. Then by \refeqn{Tetrato6j},
\[\begin{vmatrix}
n_r & n_r & 0 \\
n_r & n_r & 0
\end{vmatrix} = \Delta_{n_r}^{-2} \Tet\begin{bmatrix}
n_r & n_r & 0 \\
n_r & n_r & 0
\end{bmatrix} = \Delta_{n_r}^{-2}\Delta_{n_r}=\Delta_{n_r}^{-1} \]
and the result follows as in the previous case.
\end{proof}

\begin{corollary} \label{Cor:SignofN}
Let $L$ be a flat octahedral fully augmented link with $c$ crossing circles, and let $N_{\pmb{i},j_1,j_2,\dots,j_c}$ be as in \refprop{ColJonesforOctFAL}. Let $\pmb{n}_r=(n_r,\dots,n_r)$. Then $N_{\pmb{n}_r,j_1,j_2,\dots,j_c}$ is real-valued, with sign given by
\[\sgn(N_{\pmb{n}_r,j_1,j_2,\dots,j_c})=
\begin{cases}
  1, & r\equiv3\mod4, \\
  -1, & r\equiv1\mod4.
\end{cases}\]
In particular, the sign of $N_{\pmb{n}_r,j_1,j_2,\dots,j_n}$ is independent of $j_1,j_2,\dots,j_c$.
\end{corollary}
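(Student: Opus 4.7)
The plan is to combine \refprop{ColJonesforOctFAL} with \reflem{DeltaLambdaSigns} and \reflem{6jSign}, then carry out a short sign count by cases on $r\bmod 4$.

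First, I apply \refprop{ColJonesforOctFAL} with $\pmb{i}=\pmb{n}_r$. Since $L$ is flat, $H=\varnothing$, so no factor of type (2) appears, and the term $N_{\pmb{n}_r,j_1,\ldots,j_c}$ is exactly a product of $c$ factors of the form $\Delta_{j_l}\lambda_{j_l,n_r}$ together with $c-1$ quantum $6j$-symbols. Under the specialization $\pmb{i}=\pmb{n}_r$, every knot-strand colour appearing inside the $6j$-symbols is $n_r$, so each symbol has the form
\[
\begin{vmatrix} n_r & n_r & j_k \\ n_r & n_r & j_l \end{vmatrix},
\]
which is precisely the shape to which \reflem{6jSign} applies. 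In particular, by \reflem{jbounded}, \refeqn{LambdaFormula}, and \reflem{6jSign}, every factor is real, hence so is $N_{\pmb{n}_r,j_1,\ldots,j_c}$.

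Next, I split into the two parity cases. If $r\equiv 3\bmod 4$, then \reflem{DeltaLambdaSigns}(2) gives $\sgn(\lambda_{j_l,n_r})=\sgn(\Delta_{j_l})$, so each $\Delta_{j_l}\lambda_{j_l,n_r}$ is positive; and by \reflem{6jSign} each $6j$-symbol is positive, so the product is $+1$. If $r\equiv 1\bmod 4$, then \reflem{DeltaLambdaSigns}(2) gives $\sgn(\lambda_{j_l,n_r})=-\sgn(\Delta_{j_l})$, so each $\Delta_{j_l}\lambda_{j_l,n_r}$ is negative, and \reflem{6jSign} makes each $6j$-symbol negative. The total sign is $(-1)^c\cdot(-1)^{c-1}=-1$. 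In each case, the resulting sign depends only on $r\bmod 4$, not on any $j_l$, which is exactly the claim.

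There is no substantial obstacle beyond bookkeeping: the corollary is essentially a packaging of the earlier sign lemmas via the explicit form of $N_{\pmb{i},j_1,\ldots,j_c}$ in \refprop{ColJonesforOctFAL}. The only point that requires a moment's attention is that the admissibility of each triple $(n_r,n_r,j_l)$ forced by the summation in \refprop{ColJonesforOctFAL} matches the hypotheses of \reflem{DeltaLambdaSigns} and \reflem{6jSign}, so the sign computation is valid for every summand that actually contributes. Granting this, the parity count above finishes the proof.
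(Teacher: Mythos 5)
Your proposal is correct and follows essentially the same route as the paper: specialize \refprop{ColJonesforOctFAL} at $\pmb{i}=\pmb{n}_r$, observe that flatness kills the $\gamma$ factors and forces every $6j$-symbol into the form $\begin{vmatrix} n_r & n_r & j_k \\ n_r & n_r & j_l \end{vmatrix}$, then apply \reflem{DeltaLambdaSigns} and \reflem{6jSign} and count signs by parity of $r\bmod 4$, yielding $+1$ and $(-1)^c(-1)^{c-1}=-1$ respectively. The only difference is that you spell out explicitly why the $\gamma$ terms are absent and why the $6j$-symbols have the required shape, which the paper leaves implicit.
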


\begin{proof}
The fact that $N_{\pmb{n}_r,j_1,\dots,j_n}$ is real-valued follows from the fact that it is a product of real-valued terms by \reflem{DeltaLambdaSigns} and \reflem{6jSign}. Moreover, if $r\equiv3\mod4$ then $\sgn(N_{\pmb{n}_r,j_1,\dots,j_n})=1$ and if $r\equiv1\mod4$ then $\sgn(N_{\pmb{n}_r,j_1,j_2,\dots,j_n})=(-1)^{c}(-1)^{c-1}=-1$, where the term $(-1)^c$ is the sign of the product of the $c$ terms $\Delta_{j_l}\lambda_{j_l,n_r}$ by \reflem{DeltaLambdaSigns}, and the $(-1)^{c-1}$ comes from the $c-1$ quantum 6j-symbols, by \reflem{6jSign}.
\end{proof}

\subsection{Bounds}
Belletti, Detcherry, Kalfagianni and Yang~\cite{MainPaper} give the following upper bound on the growth of rate of the quantum $6j$-symbol. Related results on these growth rates are also due to Costantino~\cite{Cost6jSymb}. 

\begin{theorem}[Theorem~1.2 of \cite{MainPaper}] \label{Thm:6jBound}
For any $r$ and any $r$-admissible 6-tuple $(i,j,k,l,m,n)$, the following bound holds:
\[\frac{2\pi}{r}\log\left|\begin{vmatrix}
i & j & k \\
l & m & n
\end{vmatrix}_{q=\qroot}\right|\leq v_8+\mathcal{O}\left(\frac{\log(r)}{r}\right)\]
\end{theorem}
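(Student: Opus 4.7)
The plan is to start from the explicit formula for the quantum $6j$-symbol in \refdef{6j-defn} and reduce the bound to an asymptotic analysis of the quantum factorials $[n]!$ in terms of the Lobachevsky function $\Lambda(\theta) = -\int_0^\theta \log|2\sin t|\, dt$. The classical estimate one relies on, in a form going back to Ohtsuki's asymptotic expansions and used extensively in Costantino's work on $6j$-symbols, is
\[\frac{2\pi}{r}\log\bigl|[n]!\bigr| \;=\; -\,2\,\Lambda\!\left(\frac{\pi n}{r}\right) \;+\; \mathcal{O}\!\left(\frac{\log r}{r}\right),\]
uniformly for $0\le n\le r-2$. Applying this to each of the quantum factorials appearing in a single summand $S_z$ of \refeqn{sz} and in each of the four $\Delta$-terms of \refeqn{DeltaEqn}, one obtains
\[\frac{2\pi}{r}\log\bigl|S_z\,\Delta(ijk)\,\Delta(imn)\,\Delta(ljn)\,\Delta(lmk)\bigr| \;\le\; F(\theta_i,\theta_j,\theta_k,\theta_l,\theta_m,\theta_n,\theta_z) \;+\; \mathcal{O}\!\left(\frac{\log r}{r}\right),\]
where $\theta_x = \pi x/r$ and $F$ is an explicit combination of values of $\Lambda$ at the angles encoded by the $T_p$ and $Q_q$ of \refeqn{Tdefn}--\refeqn{Qdefn}.

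Next, the sum over $z$ in \refdef{6j-defn} contains at most $r$ terms, so bounding the sum in absolute value by $r$ times the largest summand contributes only an extra $\frac{2\pi}{r}\log r = \mathcal{O}(\log r / r)$ to $\frac{2\pi}{r}\log\bigl|\langle\,\cdot\,\rangle\bigr|$. It therefore suffices to establish the bound summand-by-summand. The problem then reduces to the purely analytic inequality
\[\sup F(\theta_i,\theta_j,\theta_k,\theta_l,\theta_m,\theta_n,\theta_z) \;\le\; v_8,\]
where the supremum is taken over all $(\theta_i,\ldots,\theta_n,\theta_z)\in[0,\pi]^7$ satisfying the continuous analogues of the admissibility conditions of \refdef{r-admis}: the triangle inequalities around each face, the parity-relaxed bound $\theta_i+\theta_j+\theta_k\le 2\pi$, and $\max_p T_p \le \theta_z \le \min_q Q_q$.

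The main obstacle is this optimization step. I would first eliminate $\theta_z$ by optimizing the inner one-variable problem explicitly; critical points in $\theta_z$ satisfy an equation expressing that the seven angles encode the dihedral data of a (possibly degenerate) hyperbolic truncated tetrahedron, and the value of $F$ at a critical point is then identified with a multiple of the hyperbolic volume of that truncated tetrahedron via the Murakami--Yano volume formula for the Lobachevsky combination. A stratified case analysis over compact, ideal, and degenerate configurations then shows that the maximum of this volume expression on the closure of the admissible region is achieved exactly when all six $\theta_x$ equal $\pi/2$ and the truncated tetrahedron degenerates into a regular ideal octahedron, whose volume is $v_8 \cong 3.66$. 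Combining this maximization with the factorial asymptotic and the $r$-term sum estimate yields the claimed bound, with the $\mathcal{O}(\log r / r)$ error inherited from the two approximation steps.
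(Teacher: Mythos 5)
This statement is not proved in the present paper; it is quoted verbatim as Theorem~1.2 of Belletti--Detcherry--Kalfagianni--Yang \cite{MainPaper}, so the comparison is really with the argument given there. Your overall plan --- approximate quantum factorials by the Lobachevsky function, bound the $z$-sum in \refdef{6j-defn} by $r$ times its largest summand at a cost of $\mathcal{O}(\log r/r)$, and then maximise a continuous Lobachevsky combination over the relaxed admissibility polytope --- is indeed the architecture of the proof in \cite{MainPaper}.

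However, the key analytic input is stated for the wrong root of unity, and this is a genuine gap rather than a bookkeeping slip. With $q = A^2 = e^{2\pi\sqrt{-1}/r}$ as in \refdef{QuantumInt}, one has $[k] = \sin(2\pi k/r)/\sin(2\pi/r)$, and the Riemann-sum estimate for $\prod_{k=1}^n |2\sin(2\pi k/r)|$ gives argument $2\pi n/r$ inside $\Lambda$ with no prefactor of $2$: the correct statement, used in \cite{MainPaper}, is $\frac{2\pi}{r}\log\bigl|\prod_{k=1}^n 2\sin(2\pi k/r)\bigr| = -\Lambda(2\pi n/r) + \mathcal{O}(\log r/r)$. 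The formula $-2\Lambda(\pi n/r)$ that you wrote is the one appropriate to $q = e^{\pi\sqrt{-1}/r}$, the Kashaev root. These are not interchangeable: $\Lambda(2\theta)\neq 2\Lambda(\theta)$ in general (for instance $\Lambda(\pi/2)=0$ while $2\Lambda(\pi/4)=v_8/4\neq 0$), and this is precisely the distinction that separates the Chen--Yang phenomenon from Kashaev's. Running your optimisation with $-2\Lambda(\pi n/r)$ produces the wrong objective $F$, and there is no reason for its supremum to be $v_8$. A secondary issue: the claimed per-factorial error $\mathcal{O}(\log r/r)$ for $[n]!$ is also off, since the $\sin(2\pi/r)^{-n}$ normalisation in $[n]=\sin(2\pi n/r)/\sin(2\pi/r)$ by itself contributes $\mathcal{O}(\log r)$ after multiplying by $2\pi/r$; this only collapses to $\mathcal{O}(\log r/r)$ once all the factorials of \refdef{6j-defn} are combined, because the net exponent of $[1]$ in a quantum $6j$-symbol is bounded independently of $r$. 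You rightly flag the maximisation as the main obstacle; your sketch through the Murakami--Yano truncated-tetrahedron volume formula and degeneration to the regular ideal octahedron is in the spirit of Costantino \cite{Cost6jSymb} and is a viable route once the correct $\Lambda$-asymptotic is substituted, but as written the computation cannot close.
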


For a specific choice of colours, the following result is given by Kumar and Melby~\cite{KumarMelbyTVadditivity}; compare also to \cite[Lemma~3.13]{MainPaper}.

\begin{theorem}[Lemma~3.6 of \cite{KumarMelbyTVadditivity}] \label{Thm:lowerBoundChoice}
If the sign is chosen such that $\frac{r-2\pm1}{2}$ is even, then the following holds:
\[\frac{2\pi}{r}\log\Bigg|\begin{vmatrix}
\frac{r-2\pm1}{2} & \frac{r-2\pm1}{2} & \frac{r-2\pm1}{2} \\[2pt]
\frac{r-2\pm1}{2} & \frac{r-2\pm1}{2} & \frac{r-2\pm1}{2}
\end{vmatrix}_{q=\qroot}\Bigg|=v_8+\mathcal{O}\left(\frac{\log(r)}{r}\right)\]
\end{theorem}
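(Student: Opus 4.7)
The plan is to match the upper bound of \refthm{6jBound} with a lower bound of the same order, using the explicit formula of \refdef{6j-defn} together with the sign-constancy result of \refthm{6TuplesandHypIdealTetra}. Since \refthm{6jBound} already gives $\le v_8 + O(\log(r)/r)$, the task is to show a matching inequality in the reverse direction.

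Setting $i=j=k=l=m=n = n_r$ with $n_r = (r-2\pm 1)/2$ chosen so that $n_r$ is even (hence $3n_r$ is even, making the face parameters integers), the four face parameters all equal $T := 3n_r/2$ and the three quadrilateral parameters all equal $Q := 2n_r$. The formula of \refdef{6j-defn} collapses to
\[
\begin{vmatrix} n_r & n_r & n_r \\ n_r & n_r & n_r \end{vmatrix}
= (-1)^{3n_r}\,\Delta(n_r,n_r,n_r)^4 \sum_{z=T}^{Q} S_z,
\qquad
S_z = \frac{(-1)^z [z+1]!}{([z-T]!)^4 ([Q-z]!)^3}.
\]
First I would verify the hypotheses of \refthm{6TuplesandHypIdealTetra}: the differences $Q-T = n_r/2$ lie in $[0,(r-2)/2]$ and $T = 3n_r/2$ lies in $[(r-2)/2, r-2]$. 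Hence all $S_z$ for $T\le z \le Q$ share a common sign, so
\[
\Bigl|\sum_{z=T}^{Q} S_z \Bigr| \;=\; \sum_{z=T}^{Q} |S_z| \;\ge\; \max_{T\le z\le Q} |S_z|,
\]
and since only $O(r)$ values of $z$ contribute, the reverse bound $\sum_z |S_z| \le O(r) \max_z |S_z|$ also holds. Combining,
\[
\frac{2\pi}{r} \log \Bigl| \sum_z S_z \Bigr|
= \frac{2\pi}{r} \log \max_z |S_z| + O\!\bigl(\tfrac{\log r}{r}\bigr).
\]

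The heart of the argument is then a Laplace-method analysis of $\max_z |S_z|$, together with the prefactor $\Delta(n_r,n_r,n_r)^4$, using the standard asymptotic for quantum factorials at $q = \qroot$: after recognising $\tfrac{2\pi}{r}\log |[k]!|$ as a Riemann sum for an integral of $-\log|2\sin t|$, the divergent linear contributions of the form $(2\pi k/r)\log(2\pi/r)$ cancel across numerator and denominator of $S_z$. The maximising $z^{\ast} \in [T,Q]$ is determined by the discrete analogue of $\partial_z \log |S_z| = 0$, and evaluating yields a specific linear combination of Lobachevsky function values $\Lambda(\theta) = -\int_0^{\theta}\log|2\sin t|\,dt$ at rational multiples of $\pi$. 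The main obstacle is the final identification: one must show that this particular Lobachevsky sum is not merely bounded by $v_8$ but exactly equals $8\Lambda(\pi/4) = v_8$. Geometrically this corresponds to the fact that the symmetric $6$-tuple at the boundary of $r$-admissibility produces a degenerate hyperbolic ``tetrahedron'' whose volume is precisely that of the regular ideal octahedron, and verifying this identity analytically is the technical core of the proof. Combined with the upper bound \refthm{6jBound}, the matching lower bound yields the claimed asymptotic equality.
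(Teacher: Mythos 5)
This result is not proved in the paper: it is imported verbatim from Kumar--Melby (\cite{KumarMelbyTVadditivity}, Lemma~3.6), so there is no in-paper argument to compare against. Your outline follows the standard methodology in that literature (Costantino~\cite{Cost6jSymb}; Belletti--Detcherry--Kalfagianni--Yang~\cite{MainPaper}): expand the $6j$-symbol via \refdef{6j-defn}, use sign-constancy (\refthm{6TuplesandHypIdealTetra}) to replace $\bigl|\sum_z S_z\bigr|$ by $\max_z |S_z|$ up to a factor absorbed into $O(\log r/r)$, and evaluate quantum-factorial asymptotics via the Lobachevsky function. The explicit values you compute --- $T = 3n_r/2$, $Q = 2n_r$, the collapsed form of $S_z$, and the verification of hypotheses (i) and (ii) --- are all correct.

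That said, there is a genuine gap, and you flag it yourself: having reduced to analysing $\max_z|S_z|$ together with the prefactor $\Delta(n_r,n_r,n_r)^4$, you do not carry out the Laplace analysis, locate the maximising $z^*$, write the resulting limit as an explicit combination of Lobachevsky values, or verify that this combination equals $8\Lambda(\pi/4) = v_8$. Everything preceding that point is a routine reduction; the final step is where the theorem actually lives. What is written, combined with \refthm{6jBound}, gives only the upper bound $\le v_8 + O(\log r / r)$; the equality requires the matching lower bound, which in turn requires the explicit Lobachevsky computation. To close the gap one must either carry out that computation in full (finding $z^*$ from the discrete critical-point equation, summing the Lobachevsky contributions from the seven factorials plus the $\Delta^4$ prefactor, and verifying the identity with $8\Lambda(\pi/4)$), or cite the precise spot in Kumar--Melby or its predecessors where this symmetric $6$-tuple is evaluated.
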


\begin{lemma} \label{Lem:DeltaLambdaBound}
For any fixed integers $j$ and $a$ with $0\leq j,a<r-1$,
\[|\Delta_j\lambda_{j,a}|_{q=\qroot}=\mathcal{O}\left(r\right)\]
\end{lemma}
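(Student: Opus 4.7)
The plan is to simply substitute the explicit closed-form expressions for $\Delta_j$ and $\lambda_{j,a}$ provided by \reflem{jbounded} and \reflem{remove-circle}, and exploit a cancellation between them. By \reflem{jbounded},
\[\Delta_j = (-1)^j\,\frac{\sin(2\pi(j+1)/r)}{\sin(2\pi/r)},\]
and by \refeqn{LambdaFormula} in \reflem{remove-circle},
\[\lambda_{j,a} = \frac{(-1)^a \sin(2\pi(j+1)(a+1)/r)}{\sin(2\pi(j+1)/r)}.\]

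The key observation is that the factor $\sin(2\pi(j+1)/r)$ appears in the numerator of $\Delta_j$ and in the denominator of $\lambda_{j,a}$, so it cancels in the product. This gives
\[\Delta_j\lambda_{j,a} = (-1)^{j+a}\,\frac{\sin(2\pi(j+1)(a+1)/r)}{\sin(2\pi/r)}.\]
Taking absolute values, the numerator is bounded by $1$, and the denominator satisfies $\sin(2\pi/r) \sim 2\pi/r$ as $r \to \infty$. Therefore
\[|\Delta_j \lambda_{j,a}| \leq \frac{1}{\sin(2\pi/r)} = \mathcal{O}(r),\]
as claimed.

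There is no serious obstacle here: the statement is essentially an immediate consequence of the explicit sine formulas, and the only subtlety is noticing the cancellation which allows us to avoid the possible vanishing (or near vanishing) of $\sin(2\pi(j+1)/r)$ for values of $j$ close to $r-1$. Since $j$ and $a$ are only restricted to $0 \leq j, a < r-1$, the bound is uniform in $j$ and $a$, and depends only on $r$.
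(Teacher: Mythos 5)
Your proposal is correct and is essentially the same as the paper's proof: both multiply the explicit sine formulas for $\Delta_j$ and $\lambda_{j,a}$, cancel the common factor $\sin(2\pi(j+1)/r)$, and bound the resulting expression by $|\sin(2\pi/r)|^{-1} = \mathcal{O}(r)$.
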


\begin{proof}
For $q=A^2=\qroot$, by \refeqn{LambdaFormula} and \refeqn{Delta_n} we have
\[
\Delta_j\lambda_{j,a}=(-1)^{a+j} \, \frac{\sin(2(j+1)(a+1)\pi/r)}{\sin(2\pi/r)}
\]
Thus, $|\Delta_j\lambda_{j,a}|\leq|\sin(2\pi/r)|^{-1}\leq r/(2 \pi)$ for sufficiently large $r$.
\end{proof}

\begin{proposition}\label{Prop:UpperBound}
Let $L$ be an octahedral fully augmented link with $c$ crossing circles. Then
\[
\frac{2\pi}{r}\log\left|TV_r(S^3\setminus L,q)\right|\leq2(c-1)v_8+\mathcal{O}\left(\frac{\log(r)}{r}\right)
\]
\end{proposition}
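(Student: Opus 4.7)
The plan is to combine four ingredients already established: the expression of $TV_r$ as a weighted sum of squared coloured Jones polynomials (\refthm{TVfromColJones}); the explicit expansion of $J_{L,\pmb{i}+\pmb{1}}$ as a sum $\sum_{j_1,\dots,j_c} N_{\pmb{i},j_1,\dots,j_c}$ in which only $c-1$ quantum $6j$-symbols can grow exponentially (\refprop{ColJonesforOctFAL}); the uniform growth bound $\frac{2\pi}{r}\log|\text{6j}| \leq v_8 + O(\log r / r)$ (\refthm{6jBound}); and the polynomial bound $|\Delta_j \lambda_{j,a}| = O(r)$ (\reflem{DeltaLambdaBound}). Everything else in $N$ — the half-twist factors $\gamma$, the prefactor $\eta'_r$, and the various counts of summation indices — contributes only polynomially in $r$, hence vanishes on the $\frac{2\pi}{r}\log$ scale.

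First I would apply \refthm{TVfromColJones} to write
\[
|TV_r(S^3\setminus L, q)| \leq 2^{n-1}|\eta'_r|^2 \sum_{1\leq \pmb{i} \leq m}|J_{L,\pmb{i}}(A)|^2,
\]
and observe that $|\eta'_r|^2 = O(1/r)$ while the number of multi-indices is at most $r^n$. Hence after the $\frac{2\pi}{r}\log$ rescaling these prefactors contribute only $O(\log r / r)$, and it suffices to prove, uniformly in $\pmb{i}$,
\[
\frac{2\pi}{r}\log|J_{L,\pmb{i}}(A)|^2 \leq 2(c-1)v_8 + O(\log r / r).
\]
Then I would apply \refprop{ColJonesforOctFAL} and the triangle inequality to bound $|J_{L,\pmb{i}}(A)| \leq \sum_{j_1,\dots,j_c}|N_{\pmb{i},j_1,\dots,j_c}|$, and estimate $|N|$ factor by factor: the $c$ terms $|\Delta_{j_l}\lambda_{j_l,a_l}|$ are each $O(r)$ by \reflem{DeltaLambdaBound}; the half-twist factors $|\gamma_{j_l}^{i_k,i_m}|$ all equal $1$ since $A$ lies on the unit circle; and the $c-1$ quantum $6j$-symbols together give at most $\exp\bigl(\tfrac{r}{2\pi}((c-1)v_8+O(\log r / r))\bigr)$ by \refthm{6jBound}. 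Summing over the at most $r^c$ choices of $(j_1,\dots,j_c)$ and squaring yields $|J_{L,\pmb{i}}(A)|^2 \leq \mathrm{poly}(r)\cdot\exp\bigl(\tfrac{r}{2\pi}(2(c-1)v_8 + O(\log r / r))\bigr)$.

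The argument is essentially bookkeeping — the genuine analytic input is \refthm{6jBound}. The main thing to watch is tracking all polynomial-in-$r$ factors (from the index counts $r^n$ and $r^c$, from $|\Delta_j\lambda_{j,a}|=O(r)$, and from $|\eta'_r|^2$) and confirming each is harmless on the logarithmic scale. In particular it is important that $|\gamma|=1$, so that the presence or absence of half-twists is invisible to the upper bound, and the same estimate holds for flat and non-flat octahedral fully augmented links. The resulting constant $2(c-1)v_8$ agrees with $\vol(S^3\setminus L)$ via \refprop{shadowlinkvolume} and the isometry of \refthm{octareFSL}, as expected.
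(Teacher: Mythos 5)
Your proof is correct and follows essentially the same route as the paper's: expand $TV_r$ via \refthm{TVfromColJones}, discard polynomial-in-$r$ prefactors after the $\tfrac{2\pi}{r}\log$ rescaling, expand $J_{L,\pmb{i}}$ via \refprop{ColJonesforOctFAL}, and bound the factors of $N$ term-by-term using \reflem{DeltaLambdaBound} and \refthm{6jBound}. The only cosmetic difference is that the paper replaces the sums by their largest summand where you use the triangle inequality together with a polynomial count of terms; these are equivalent on the logarithmic scale, and your explicit remark that $|\gamma|=1$ (so half-twists are invisible to the upper bound) is a point the paper leaves implicit in this proof.
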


\begin{proof}
Let $s$ denote the number of knot strands of $L$. 
By \refthm{TVfromColJones} and \refdef{CJP}
\[
TV_r(S^3\setminus L,q) =
2^{c+s-1} (\eta_r')^2 \sum_{1\leq\pmb{i}\leq m} |J_{L,\pmb{i}}(A)|^2 =
  2^{c+s-1}(\eta_r')^2\sum_{1\leq\pmb{i}\leq m} \left| \sum_{j_1,j_2,\dots,j_c} N_{\pmb{i},j_1,j_2,\dots,j_c} \right|^2
\]
where $N_{\pmb{i},j_1,j_2,\dots,j_c}$ is as in \refprop{ColJonesforOctFAL}.

Let $C_r$ be the number of possible colours $\pmb{i}$ such that $1\leq\pmb{i}\leq m  =(r-1)/2$. Then $C_r\leq m^{c+s}$ grows at most polynomially in $r$. The term $\eta_r' = (q-q^{-1})/\sqrt{-r}$ also grows at most polynomially in $r$.
Observe that $|J_{L,\pmb{i}}(A)|$ consists of $c$ nested sums. By admissibility, each summation variable $j_k$ is such that $0\leq j_k\leq r-2$ for $1\leq k\leq c$. Hence, $|J_{L,\pmb{i}}(A)|$ consists of at most $(r-1)^c$ terms.

Let $\pmb{I}_r$ denote the index realising $\max_{1\leq\pmb{i}\leq m}|J_{L,\pmb{i}}(A)|$, so $|J_{L,\pmb{I}_r}(A)|$ is this maximum value. 
Let $J = \{j_1, \dots, j_n\}$ be the choice of summation variables that realise the maximum of $|N_{\pmb{I}_r,j_1, \dots, j_n}|$. So $|N_{\pmb{I}_r,J}|$ is this maximum. 
Then we obtain the following bounds.
\[
\frac{2\pi}{r}\log\left|TV_r(S^3\setminus L,q)\right| \leq \frac{2\pi}{r}\log|J_{L,\pmb{I}_r}(A)|^2 + \mathcal{O}\left( \frac{\log(r)}{r}\right) \leq \frac{2\pi}{r} \cdot 2\log|N_{\pmb{I}_r,J}|+\mathcal{O}\left(\frac{\log(r)}{r}\right)
\]

Now, $N_{\pmb{I}_r, J}$ is a product of terms including $c-1$ quantum $6j$-symbols.
Thus applying \reflem{DeltaLambdaBound} and \refthm{6jBound} we obtain 
\[
\frac{2\pi}{r}\cdot 2\log|N_{\pmb{I}_r,J}|
\leq 2(c-1)v_8 + \mathcal{O}\left(\frac{\log(r)}{r}\right)  \qedhere\]
\end{proof}

\begin{lemma}\label{Lem:LowerBound}
Let $L$ be an octahedral fully augmented link with $c$ crossing circles, and assume that $L$ has no half-twists. Then for odd $r$, 
\[ \lim_{r\to\infty}\frac{2\pi}{r}\log\left|TV_r(S^3\setminus L,q)\right| \geq  2(c-1)v_8\]
\end{lemma}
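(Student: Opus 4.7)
My plan is to lower-bound $TV_r$ by retaining a single term of the double sum in \refeqn{TVasSumofJones}, together with the crucial input from \refcor{SignofN} that eliminates cancellation in the inner sum defining $|J_{L,\pmb{n}_r}(A)|$. Since every summand in \refeqn{TVasSumofJones} is non-negative, I first drop all terms except the balanced coloring $\pmb{i}=\pmb{n}_r:=(n_r,\dots,n_r)$:
\[
TV_r(S^3\setminus L,q)\ \geq\ 2^{n-1}(\eta_r')^2\,|J_{L,\pmb{n}_r}(A)|^2,
\]
where the prefactor $2^{n-1}(\eta_r')^2$ is at most polynomial in $r$ and so contributes only $O(\log(r)/r)$ under the operator $\tfrac{2\pi}{r}\log$.

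Next, I expand $|J_{L,\pmb{n}_r}(A)|$ using \refprop{ColJonesforOctFAL} as the absolute value of a sum $\sum_{j_1,\dots,j_c} N_{\pmb{n}_r,j_1,\dots,j_c}$, and then invoke \refcor{SignofN}: because $L$ has no half-twists, each summand has the same sign, independent of the $j_l$. Consequently no cancellation occurs, and for any admissible choice of summation variables
\[
|J_{L,\pmb{n}_r}(A)|\ =\ \sum_{j_1,\dots,j_c}\bigl|N_{\pmb{n}_r,j_1,\dots,j_c}\bigr|\ \geq\ \bigl|N_{\pmb{n}_r,n_r,\dots,n_r}\bigr|.
\]
I choose $j_l=n_r$ precisely so that each of the $c-1$ quantum $6j$-symbols in \refprop{ColJonesforOctFAL} takes the fully balanced form $\begin{vmatrix} n_r & n_r & n_r \\ n_r & n_r & n_r \end{vmatrix}$ of \refthm{lowerBoundChoice}, which applies because $n_r = (r-2\pm 1)/2$ with the sign chosen to make $n_r$ even.

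Finally, I combine the growth estimates. Since $L$ is flat, $N_{\pmb{n}_r,n_r,\dots,n_r}$ has no $\gamma$ factors and is a product of $c$ terms $\Delta_{n_r}\lambda_{n_r,n_r}$ together with $c-1$ balanced $6j$-symbols. A direct sine computation, analogous to \reflem{DeltaLambdaBound}, shows $|\Delta_{n_r}\lambda_{n_r,n_r}|=\Theta(r)$, so these factors contribute only $O(\log(r)/r)$ under $\tfrac{2\pi}{r}\log$. \refthm{lowerBoundChoice} then supplies $v_8+O(\log(r)/r)$ for the $\tfrac{2\pi}{r}\log$ of each balanced $6j$-symbol, whence $\tfrac{2\pi}{r}\log|N_{\pmb{n}_r,n_r,\dots,n_r}|^{\,2} = 2(c-1)v_8 + O(\log(r)/r)$, and the lemma follows in the limit. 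The principal obstacle is the sign analysis underlying \refcor{SignofN}, which makes the flatness hypothesis indispensable: with half-twists, the $\gamma$ factors of \reflem{removeHalfTwist} could mix signs and reintroduce cancellation in $\sum_{j_l}N_{\pmb{n}_r,\dots}$, destroying the lower bound. A minor check is the $r$-admissibility of $(n_r,n_r,n_r)$, which amounts to $3n_r\leq 2r-4$ and holds for all $r\geq 5$.
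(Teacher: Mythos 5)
Your proposal is correct and follows essentially the same route as the paper: restrict \refeqn{TVasSumofJones} to the single colouring $\pmb{n}_r$, invoke \refcor{SignofN} to rule out cancellation in $\sum_{j_1,\dots,j_c}N_{\pmb{n}_r,j_1,\dots,j_c}$, keep only the term $j_1=\dots=j_c=n_r$, and then apply \refthm{lowerBoundChoice} to the $c-1$ balanced $6j$-symbols while discarding the polynomially bounded $\Delta_{n_r}\lambda_{n_r,n_r}$ factors. Your remark that $|\Delta_{n_r}\lambda_{n_r,n_r}|=\Theta(r)$ (rather than merely $O(r)$ as stated in \reflem{DeltaLambdaBound}) and your explicit admissibility check for $(n_r,n_r,n_r)$ are small points of extra care, but the argument is the same as the paper's.
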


\begin{proof}
Let $s$ denote the number of knot strands of $L$, so its total number of components is $c+s$. Consider the colouring $\pmb{n}_r=(n_r,\dots,n_r)$. Using \refthm{TVfromColJones} and the fact that $\eta_r'$ grows at most polynomially in $r$, 
\begin{align*}
  \lim_{r\to\infty}\frac{2\pi}{r}\log\left|TV_r(S^3\setminus L,q)\right| &\ge \lim_{r\to\infty}\frac{2\pi}{r}\log|2^{c+s-1}(\eta_r')^2J_{L,\pmb{n}_r}(A)|^2 \\
& = 2\lim_{r\to\infty}\frac{2\pi}{r}\log|J_{L,\pmb{n}_r}(A)| = 2\lim_{r\to\infty}\frac{2\pi}{r}\log\left|\sum_{j_1,\dots,j_c}N_{\pmb{n}_r,j_1,\dots,j_c}\right|
\end{align*}

Since $L$ has no half-twists, by \refcor{SignofN}, all summands in $|J_{L,\pmb{n}_r}(A)|$ have the same sign, so we can bound the sum below by an individual summand. In particular, we can bound $\left| \sum_{j_1, \dots, j_c} N_{\pmb{n}_r,j_1, \dots, j_c} \right|$
from below by the term $|N_{\pmb{n}_r, n_r, \dots, n_r}|$. That is, take the term with $j_1 = j_2 = \dots = j_c = n_r$, noting that this is one of the terms in the sum, since the sum runs over all $j_k$ with $(n_r, n_r, j_k)$ admissible and $0 \leq j_k \leq r-3$, for $1 \leq k \leq c$. The triple $(n_r,n_r,n_r)$ is such an admissible triple. Hence,
\[
\lim_{r\to\infty}\frac{2\pi}{r}\log\left|TV_r(S^3\setminus L,q)\right| \geq \lim_{r\to\infty}\frac{2\pi}{r}\log|N_{\pmb{n}_r,n_r,\dots,n_r}| = 2(c-1)v_8
\]
where the final equality follows from \reflem{DeltaLambdaBound} and \refthm{lowerBoundChoice}.
\end{proof}

We are now ready to conclude our new proof of the TV volume conjecture for octahedral fully augmented links, a result which is originally due to \cite{WongYang,MainPaper}. 

\begin{theorem} \label{Thm:TVconjforOctFAL}
Let $L$ be an octahedral fully augmented link with $c$ crossing circles and no half-twists. Then for odd $r$,
\begin{equation}\label{Eqn:TVConj}
\lim_{r \to \infty}\frac{2\pi}{r}\log|TV_r(S^3\setminus L,q=\qroot)| = \vol(S^3\setminus L)
\end{equation}
\end{theorem}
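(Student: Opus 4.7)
The plan is to combine the upper and lower bounds already established in \refprop{UpperBound} and \reflem{LowerBound}, then identify the common value $2(c-1)v_8$ with the hyperbolic volume $\vol(S^3\setminus L)$ via the geometric correspondence proved in \refsec{FALandFSL}. Since every nontrivial ingredient is already in place, this final theorem is essentially a two-step assembly.

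First, I would note that \refprop{UpperBound} gives
\[
\limsup_{r\to\infty}\frac{2\pi}{r}\log|TV_r(S^3\setminus L,q)| \leq 2(c-1)v_8 + \lim_{r\to\infty}\mathcal{O}\!\left(\tfrac{\log r}{r}\right) = 2(c-1)v_8,
\]
without any half-twist hypothesis. Invoking the no-half-twists assumption, \reflem{LowerBound} then provides the matching lower bound
\[
\liminf_{r\to\infty}\frac{2\pi}{r}\log|TV_r(S^3\setminus L,q)| \geq 2(c-1)v_8.
\]
Sandwiching these inequalities shows that the limit exists and equals $2(c-1)v_8$.

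The remaining step is to identify $2(c-1)v_8$ with $\vol(S^3\setminus L)$. By \refthm{octareFSL}, $S^3\setminus L$ is isometric to $\#^c(S^1\times S^2)\setminus\Tilde{L}$ for some fundamental shadow link $\Tilde{L}$. The proof of \refthm{octareFSL} (via \refprop{octaLinksCharac}) shows that the polyhedral decomposition of $S^3\setminus L$ uses $c-1$ regular ideal octahedra per polyhedron, and correspondingly $\Tilde{L}$ has shadow complexity $c-1$. Applying \refprop{shadowlinkvolume} then yields $\vol(\#^c(S^1\times S^2)\setminus\Tilde{L}) = 2(c-1)v_8$, which finishes the identification. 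The genuine obstacles have already been handled upstream: the upper bound rests on the $6j$-symbol growth rate of \refthm{6jBound}, while the lower bound requires the sign-coherence of \refcor{SignofN} to prevent cancellation in the sum defining $|J_{L,\pmb{n}_r}(A)|$, and this coherence is exactly where the no-half-twists hypothesis is consumed.
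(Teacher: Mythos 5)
Your proposal is correct and takes essentially the same approach as the paper: combine \refprop{UpperBound} and \reflem{LowerBound} to pin down the limit at $2(c-1)v_8$, then identify that number with $\vol(S^3\setminus L)$. The only cosmetic difference is that you route the volume identification through \refthm{octareFSL} and \refprop{shadowlinkvolume}, whereas the paper invokes \refprop{octaLinksCharac} directly (two polyhedra, each built from $c-1$ regular ideal octahedra, give volume $2(c-1)v_8$); both are valid.
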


\begin{proof}
By \refprop{UpperBound}, the limit on the left hand side of \refeqn{TVConj} is bounded above by $2(c-1)v_8$. 

By \reflem{LowerBound}, the left hand side of \refeqn{TVConj} is also bounded below by $2(c-1)v_8$.

Thus the limit equals $2(c-1)v_8$, which is the volume of $S^3\setminus L$ by \refprop{octaLinksCharac}.
\end{proof}

In \cite{DKYRTTV}, Detcherry, Kalfagianni, and Yang proposed a question about the asymptotic behaviour of the coloured Jones polynomial. They asked whether $J_{L, m}(t)$ for $q^2= t = e^{(2 \pi i)/(m+ \frac{1}{2})}$ grows exponentially in $m$ with growth rate equal to the hyperbolic volume. We answer their question in the positive for flat fully augmented links when $m$ varies over the even integers.  

\begin{theorem}\label{Thm:coloredJonesQuestion}
Let $L$ be an octahedral fully augmented link with $c$ crossing circles and no half-twists. Then as $m$ varies over th even integers, 
\[
\lim_{m \to \infty} \frac{4 \pi}{2m+1} \log \left|J_{L, (m,m\dots,m)} \left(t=e^{\frac{4 \pi \sqrt{-1}}{2m+1}} \right) \right|=2(c-1)v_8 = \vol(S^3 \backslash L),
\]
\end{theorem}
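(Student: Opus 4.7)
The plan is to mirror the two-sided bound argument used to prove \refthm{TVconjforOctFAL}, but applied to a single distinguished coloured Jones polynomial rather than to the full sum in \refthm{TVfromColJones}. Set $r = 2m+1$, so that the variable $t = e^{4\pi\sqrt{-1}/r}$ equals $A^4$ for the same $A$ with $q = A^2 = \qroot$ used throughout \refsec{TV}. Since $m$ is even, $r \equiv 1 \pmod{4}$ and $n_r = (r-1)/2 = m$; in particular the colouring $(m, m, \dots, m)$ in the statement is exactly $\pmb{n}_r$ as it appears in \refcor{SignofN} and \reflem{LowerBound}. The entire strategy is to reuse \refprop{ColJonesforOctFAL} and \refcor{SignofN} with this colouring.

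For the upper bound, \refprop{ColJonesforOctFAL} combined with \refeqn{colJones} gives
\[
|J_{L,(m,\dots,m)}(t)| \;=\; \Bigl|\sum_{j_1,\dots,j_c} N_{\pmb{n}_r, j_1, \dots, j_c}\Bigr|,
\]
and the same term-by-term estimate that was used to prove \refprop{UpperBound} applies here. Each summand is a product of at most $c$ factors $\Delta_{j_l}\lambda_{j_l, n_r}$ of size $\mathcal{O}(r)$ by \reflem{DeltaLambdaBound}, together with $c-1$ quantum $6j$-symbols each of size at most $\exp(rv_8/(2\pi) + \mathcal{O}(\log r))$ by \refthm{6jBound}. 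Since the sum has at most $r^c$ terms, taking logarithms and rescaling by $4\pi/(2m+1) = 2 \cdot 2\pi/r$ produces the upper bound $2(c-1)v_8 + \mathcal{O}(\log(r)/r)$.

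For the lower bound I invoke \refcor{SignofN}: because $L$ has no half-twists, every summand $N_{\pmb{n}_r, j_1, \dots, j_c}$ is real with a sign depending only on $r \bmod 4$, so the triangle inequality saturates and
\[
\Bigl|\sum_{j_1,\dots,j_c} N_{\pmb{n}_r, j_1, \dots, j_c}\Bigr| \;\ge\; |N_{\pmb{n}_r, n_r, \dots, n_r}|.
\]
The 6-tuple $(n_r,\dots,n_r)$ is admissible for $m \ge 2$, and each of the $c-1$ quantum $6j$-symbols appearing in $N_{\pmb{n}_r, n_r, \dots, n_r}$ has all six entries equal to $n_r = (r-1)/2$. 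Because $m$ is even, $(r-1)/2$ is even, so \refthm{lowerBoundChoice} (with the $+$ sign) supplies the asymptotic $\tfrac{2\pi}{r}\log|\cdot| = v_8 + \mathcal{O}(\log(r)/r)$ for each such symbol. The remaining factors $\Delta_{n_r}\lambda_{n_r, n_r}$ have nonzero modulus of at most polynomial size in $r$ and so contribute only $\mathcal{O}(\log(r)/r)$ after the scaling. This yields the matching lower bound $2(c-1)v_8 + \mathcal{O}(\log(r)/r)$, and the two bounds combine to give the stated limit; the identification $2(c-1)v_8 = \vol(S^3 \setminus L)$ is \refprop{octaLinksCharac}.

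The main obstacle is the sign control underpinning the lower bound: without the no-half-twists hypothesis the twist factors $\gamma^{i,k}_j$ from \reflem{removeHalfTwist} would disturb the uniform sign given by \refcor{SignofN}, so one could not reduce the sum to a single summand, and cancellation could in principle depress the growth rate below $2(c-1)v_8$. The restriction to $m$ even is used in two linked ways that must both be checked: it forces $r \equiv 1 \pmod 4$ so that $n_r = m$ matches the colouring in the statement, and it ensures $(r-1)/2$ is even so that \refthm{lowerBoundChoice} applies at the all-$n_r$ quantum $6j$-symbol.
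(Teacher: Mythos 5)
Your proof is correct and takes essentially the same route as the paper: rewrite the limit in terms of $r = 2m+1$ (so that for even $m$ one has $r\equiv 1\bmod 4$ and $n_r = (r-1)/2 = m$, matching the colouring in the statement), express $J_{L,\pmb{n}_r}$ via \refprop{ColJonesforOctFAL}, then obtain the upper bound by the estimate in \refprop{UpperBound} and the matching lower bound by the same-sign argument of \refcor{SignofN} together with \refthm{lowerBoundChoice}, exactly as in \reflem{LowerBound}. The paper's own proof is terser---it simply states ``the result follows as in the proof of \reflem{LowerBound}''---but the two-sided bound you make explicit is precisely what that citation is shorthand for.
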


\begin{proof}
Let  $r = 2m+1$ and $n_r = (r-1)/2$. Then as $m$ varies over the even integers
\begin{align*} 
\lim_{m \to \infty} \frac{4 \pi}{2m+1} \log 
\left|J_{L, (m,\dots,m)} \left(t=e^{\frac{4 \pi \sqrt{-1}}{2m+1}} \right) \right| &= 2 \lim_{r \to \infty } \frac{2 \pi}{r} \log \left|J_{L, \pmb{n}_r} \left(q=e^{\frac{2 \pi \sqrt{-1}}{r}} \right) \right| \\
&= 2\lim_{r \to \infty}\frac{2\pi}{r}\log\left|\sum_{j_1,j_2,\dots,j_c}N_{\pmb{n}_r,j_1,j_2,\dots,j_c}\right|
\end{align*}
as $r$ varies over the odd integers.
The result follows as in the proof of \reflem{LowerBound}.
\end{proof}

\begin{remark}
We believe that \refthm{coloredJonesQuestion} can be extended to all $m$ by minor modifications to \reflem{DeltaLambdaSigns} and \reflem{6jSign}, and by using Theorem~A.1 of \cite{MainPaper} to show that
\[ \lim_{r \to \infty} \frac{2 \pi}{r} \log \begin{vmatrix}
\frac{r - 1}{2} & \frac{r - 1}{2} & \frac{r-2\pm 1}{2} \\
\frac{r - 1}{2} & \frac{r - 1}{2} & \frac{r-2\pm 1}{2}
\end{vmatrix} = v_8.\]
\end{remark}

We now give a brief discussion of the issues which we encounter when we attempt to extend \refthm{TVconjforOctFAL} to octahedral fully augmented links with half-twists. First note the following lemma.

\begin{lemma} \label{Lem:gammaBound}
For a $r$-admissible triple $(a,b,c)$,
    \[\lim_{r\to\infty}\frac{2\pi}{r}\log(|\gamma_c^{ab}|\Big|_{q=\qroot})=0.\]
\end{lemma}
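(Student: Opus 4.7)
The plan is extremely short, because this is really a computation about absolute values on the unit circle. Recall from \reflem{removeHalfTwist} that
\[ \gamma_c^{ab} \;=\; (-1)^{k} A^{k(i+j+k+2) + ij}, \qquad i=\tfrac{b+c-a}{2},\ j=\tfrac{a+c-b}{2},\ k=\tfrac{a+b-c}{2}. \]
Since $q = A^2 = \qroot$, the parameter $A$ is a primitive $2r$-th root of unity; in particular $|A|=1$. Thus I would first observe that every factor appearing in the expression for $\gamma_c^{ab}$ has modulus one: the sign $(-1)^k$ has modulus one, and $A^N$ has modulus one for any integer exponent $N$. Hence $|\gamma_c^{ab}| = 1$ identically in $r$.

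Given this, $\log\bigl(|\gamma_c^{ab}|_{q=\qroot}\bigr) = 0$ for all admissible $(a,b,c)$ and all $r$, so
\[ \frac{2\pi}{r}\log\bigl(|\gamma_c^{ab}|_{q=\qroot}\bigr) = 0 \]
for every $r$, and the limit as $r \to \infty$ is trivially $0$.

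There is no real obstacle here; the only thing to be a little careful about is the identification of $A$. Since \refdef{CJP} fixes $q = A^2$ and we are evaluating at $q = \qroot$, one must take $A$ to be a $2r$-th root of unity (either square root of $q$ will do, since only $|A|$ matters), and then the exponent $k(i+j+k+2)+ij$ is simply an integer. This makes the conclusion immediate and independent of the admissibility conditions on $(a,b,c)$.
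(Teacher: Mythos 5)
Your argument is correct and is essentially identical to the paper's proof: both observe that since $|A|=1$ the formula from \reflem{removeHalfTwist} immediately gives $|\gamma_c^{ab}|=1$, so the logarithm vanishes and the limit is trivially zero.
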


\begin{proof} 
    The formula for $\gamma_c^{ab}$ is given in \reflem{removeHalfTwist}. But $A\in\CC$ with $|A|=1$, so it follows that $|\gamma_c^{ab}|=1$.  
\end{proof}

By applying \reflem{gammaBound}, we can prove an upper bound of $2(n-1)v_8$ for octahedral fully augmented links with $c$ crossing circles and allowing for half-twists, as in \refthm{TVconjforOctFAL}. The difficulty is in proving the lower bound. By \refprop{ColJonesforOctFAL}, each half-twist contributes a term $\gamma^{i,k}_j$, where $i,k$ are colours of knot strands and $j$ is a summation variable. For the colouring $\pmb{n}_{r}$, we have $j\in\{0,2,4,\dots,r-3\}$, so $\gamma^{n_r,n_r}_j$ changes sign. Hence, the product of all $\gamma^{n_r,n_r}_j$'s from the half-twists changes sign depending on the summation variables. Moreover, $\gamma^{i,k}_j$ can be complex valued. It is not clear to us how to show that the term $N_{\pmb{n}_r,n_r,\dots,n_r}$ does not cancel with other terms, or to show that this term is a lower bound of the sum in \refeqn{colJonesFormula}. One may, however, be able to use methods from analysis as in \cite{Ohtsuki52knot} to prove a lower bound.

Note that such issues of sign do not arise in Belletti, Detcherry, Kalfagianni and Yang's proof~\cite{MainPaper} of \refconj{TVconj} for fundamental shadow links, as they extend a result of~\cite{DKYRTTV} to prove a result relating the Turaev--Viro invariants to relative Reshetikhin-Turaev invariants \cite{BHMVtqft, LickorishSkein}. The difference in the way these invariants are computed as opposed to the coloured Jones polynomials means that the only contributions from half-twists are products of the modulus $|\gamma^{i,k}_j|$. Since $|\gamma^{i,k}_j|=1$, these contributions do not need to be considered.

\bibliographystyle{amsplain}
\bibliography{ref1}

\end{document}